\newcommand{\R}{{\mat R}}
\newcommand{\Ct}{{\mat C}}
\newcommand{\E}{{\mat E}}
\newcommand{\ds}{\displaystyle}
\newcommand{\no}{\nonumber}
\newcommand{\be}{\begin{equation}\begin{aligned}}
\newcommand{\ben}{\begin{equation*}\begin{aligned}}
\newcommand{\en}{\end{aligned}\end{equation}}
\newcommand{\enn}{\end{aligned}\end{equation*}}
\newcommand{\ba}{\backslash}
\newcommand{\pa}{\partial}
\newcommand{\ov}{\overline}
\newcommand{\I}{{\rm Im}}
\newcommand{\Rt}{{\rm Re}}
\newcommand{\curl}{{\rm curl}}
\newcommand{\Curl}{{\rm Curl}}
\newcommand{\dive}{{\rm div}}
\newcommand{\Dive}{{\rm Div}}
\newcommand{\g}{\gamma}
\newcommand{\Ga}{\Gamma}
\newcommand{\eps}{\epsilon}
\newcommand{\vep}{\varepsilon}
\newcommand{\Om}{\Omega}
\newcommand{\om}{\omega}
\newcommand{\sig}{\sigma}
\newcommand{\Sig}{\Sigma}
\newcommand{\bt}{\beta}
\newcommand{\ti}{\times}
\newcommand{\wit}{\widetilde}
\newcommand{\wih}{\widehat}
\newcommand{\De}{\Delta}
\newcommand{\ra}{\rightarrow}
\newcommand{\na}{\nabla}
\newcommand{\mat}{\mathbb}
\newcommand{\ify}{\infty}
\newcommand{\la}{\lambda}
\newcommand{\les}{\lesssim}
\newcommand{\ch}{\check}
\newcommand{\V}{\Vert}
\newcommand{\half}{\frac{1}{2}}
\newcommand{\n}{\bm{n}}
\newcommand{\0}{\bm{0}}
\newcommand{\for}{{\rm for}}
\newcommand{\on}{{\rm on}}
\newcommand{\gin}{{\rm in}}
\newcommand{\gand}{{\rm and}}
\newcommand{\diag}{{\rm diag}}
\newtheorem{theorem}{Theorem}[section]
\newtheorem{lem}[theorem]{Lemma}
\newtheorem{remark}[theorem]{Remark}
\begin{document}
\renewcommand{\theequation}{\arabic{section}.\arabic{equation}} %公式编号与章节关联

\title{\bf
% Convergence of the PML method for time-dependent electromagnetic scattering 
% by an elastic body in a two-layered medium
Time domain analysis for electromagnetic scattering
by an elastic obstacle in a two-layered medium
}
\author{Changkun Wei\thanks{Research Institute of Mathematics,
 Seoul National University, Seoul, 08826, Republic of Korea ({\tt
ckun.wei@snu.ac.kr})}
\and
Jiaqing Yang\thanks{School of Mathematics and Statistics, Xi'an Jiaotong University,
Xi'an, Shaanxi, 710049, China ({\tt jiaq.yang@xjtu.edu.cn})}
\and
Bo Zhang\thanks{NCMIS, LSEC and Academy of Mathematics and Systems Science, Chinese Academy of Sciences,
Beijing, 100190, China and School of Mathematical Sciences, University of Chinese Academy of Sciences,
Beijing 100049, China ({\tt b.zhang@amt.ac.cn})}
}
\date{}

\maketitle

%\vspace{.2in}

\begin{abstract}
In this paper, we consider the scattering of a time-dependent electromagnetic wave by an elastic body immersed in the lower half-space of a two-layered background medium which is separated by an unbounded rough surface. By proposing two exact transparent boundary conditions (TBCs) on the artificial planes, we reformulate the unbounded scattering problem into an equivalent 
initial-boundary value problem in a strip domain with the well-posedness and stability proved using the Laplace transform, variational method and energy method. A perfectly matched layer (PML) is then introduced to truncate the interaction problem with two finite layers containing the elastic body, leading to a PML problem in a finite strip domain. We further verify the existence, 
uniqueness and stability estimate of solution for the PML problem. Finally, we establish the exponential convergence in terms of the thickness and parameters of the PML layers via an error estimate on the electric-to-magnetic (EtM) capacity operators between the original problem and the PML problem.

\vspace{.2in}
{\bf Keywords:} Electromagnetic wave equation, elastic wave equation, two-layered medium, 
time-domain, well-posedness, perfectly matched layer, exponential convergence.
\end{abstract}

\section{Introduction}
Let us consider the interaction scattering of a time-dependent electromagnetic field by an elastic body embedded in a two-layered medium in three dimensions. This problem can be categorized into the
class of the unbounded rough surface scattering problems, which are the subject of intensive studies in the engineering and mathematics. In the problem setting, the whole space is divided into two parts by an unbounded rough surface $\Ga_f$ with the elastic body $\Om$ immersed in the lower half-space. We assume that the electromagnetic field initiated by an electric current density produces a 
tangential stress on the interface $\Ga:=\pa\Om$ which excites an elastic displacement of the elastomer. Following the Voigt's model (cf. \cite{Maugin1988,Cakoni2003,BMM2010,GHM2010}), we assume 
that the electromagnetic field does not considerably penetrate inside the elastomer. Several important works have been done on this typical electromagnetic-elastic interaction problem,
which is confined to the time-harmonic setting.  It was shown in \cite{Cakoni2003} that Cakoni \& Hsiao established a mathematical model, for which the uniqueness and an equivalent
boundary-field equation formulation as well as a weak variational formulation were presented in an appropriate Sobolev space. Based on the framework of \cite{Cakoni2003}, the existence of a solution was
shown by using the variational method \cite{GHM2010}, which was later extended to a different Sobolev space for the elastic field \cite{BMM2010}.
Further, it was shown in \cite{GHM2010} that a finite element Galerkin scheme was provided to compute both the scattered electromagnetic field and the elastic displacement. Very recently, the well-posedness
was established for the interaction problem in \cite{zyz2019} with general transmission conditions via the variational method in combination with the classical Fredholm alternative.

In this paper, we aim to present a theoretical analysis for the time-dependent electromagnetic scattering by an elastic body in a two-layered medium. The goal of this work consists of the following three parts:
\begin{itemize}
	\item Prove the well-posedness and stability for the interaction problem;
	\item Propose a time-domain PML method and show the well-posedness
	and stability;
	\item Establish the exponential convergence of the PML method in terms of thickness and parameters 
	of the PML layer.
\end{itemize}

Due to the unbounded interface, the usual Silver-M$\ddot{u}$ller radiation condition is not valid anymore to describe the asymptotic behavior of scattered waves away from the rough surface. 
Moreover, the classical Fredholm alternative theorem may not be applied into this kind of problems due to the lack of compactness result. These make the studies of interface scattering problems quite 
challenging. For the time-harmonic setting, there exists lots of works for the mathematical analysis with using either the boundary integral equations method or the variational method; see, e.g.  
\cite{CB1998,CHP2006,CM2005,CRZ1999,Zhang2003} for the acoustic wave and \cite{HL2011,LWZ2011,LZZ2017} for the electromagnetic wave. Recently, the time-domain scattering problems have 
attracted much attention due to their capability of capturing wide-band signals and modeling more general material and nonlinearity \cite{CM2014,Li2012,WW2014,Wang2012,Zhao2013}. Precisely, the mathematical analysis can be found in \cite{chen2009,WW2014} for time-dependent scattering problems in the full acoustic wave cases, and \cite{chen2008,LWW2015,GL2016,GL2017} in the full 
electromagnetic wave cases. In addition, the time-dependent fluid-solid interaction problems has been also studied for the bounded elastomer \cite{Bao2018}, local rough surfaces \cite{wy2019}, and unbounded layered structures in the three-dimensional case \cite{GLZ2017}. To the best of our knowledge, the mathematical analysis is quite rare for the electromagnetic-elastic interaction problems in the 
time domain. Here, we refer to a recently related work \cite{wyz-amas-2020} for a bounded obstacle embedded in the homogeneous background medium.

As is known, the perfectly matched layer (PML) method is a fast and effective method for solving unbounded scattering problems which was originally proposed by B\'erenger in 1994 for Maxwell's equations 
\cite{BERENGER1994}.  The purpose of the PML method is to surround the computational domain with a specially designed medium in a finite thickness layer in which the scattered waves decay
rapidly regardless of the wave incident angle, thereby greatly reducing the computational complexity of the scattering problem. Since then, various PML formulations have been widely created and 
studied for solving the wave scattering problems (see, e.g., \cite{TURKEL1998,Collino1998,Lassas1998,Teixeira2001,CW2003,CM2009,chen2009}).
The broad applications of the PML method attract great interests for mathematicians to study the convergence analysis for the time-harmonic scattering problems; see, e.g. 
\cite{Lassas1998,HSZ2003,CZ2010,BP2013,BW2005,Bramble2007,BP2008,BP2012,CZ2017} for the acoustic and electromagnetic obstacle scattering problems. However, the PML technique is much less studied for unbounded rough surface scattering problems. A general linear convergence was proved in \cite{CM2009} for the acoustic scattering problem depending on the thickness and 
composition of the layer. Moreover, an exponential convergence was also established in \cite{LWZ2011} for the electromagnetic scattering problems.

Compared with the time-harmonic setting, very few results are available for the mathematical analysis of the time-domain PML method, which is challenged by the dependence of the absorbing medium on 
all frequencies. For the 2D time-domain acoustic scattering problem, the exponential convergence of both a circular PML method \cite{chen2009} and a uniaxial PML method \cite{Chen2012} were 
established in terms of the thickness and absorbing parameters. For the 3D time domain electromagnetic scattering problem,
the exponential convergence of a spherical PML method was very recently shown in \cite{wyz2019} in terms of the thickness and parameter of the PML layer, based on a real coordinate stretching 
technique associated with $[\Rt(s)]^{-1}$ in the Laplace domain, where $s\in\Ct_+$ is the Laplace transform variable. It is also noticed that for the acoustic-elastic interaction problem, the well-posedness 
and stability estimates of the time-domain PML method was proved in \cite{Bao2018}, but no convergence analysis was provided. We also remark that an exponential convergence of the PML method was 
recently established in \cite{wyz-cms-2020} for the fluid-solid interaction problem above an unbounded rough surface, which generalized our previous idea \cite{wyz2019} with the real coordinate stretching technique.

In this paper, we intend to study the time-dependent electromagnetic-elastic interaction problem in a two-layered medium associated with a bounded elastic body immersed in the lower half-space. With 
the aid of the factorization on the interface conditions \cite{wyz-amas-2020} and two exact time domain TBCs, we establish the well-posedness and stability of the interaction problem based on the variational method and the Laplace transform and its inversion. Further, we propose a time-domain PML method along $x_3$ direction by using the real coordinate stretching technique in \cite{wyz2019} associated 
with $[\Rt(s)]^{-1}$ in the frequency domain.  The well-posedness and stability estimate of the truncated PML problem are proved by the Laplace transform and energy method. An exponential convergence 
is then proved in terms of the thickness and parameters of the PML layer, through an error estimate on the EtM operators between the original problem and the PML problem.

The outline of this paper is as follows. In section \ref{sec-model}, we introduce some basic notations and give a brief description of our model problem. In section \ref{sec-well_posed}, the original interaction scattering problem is firstly reduced into an equivalent initial-boundary value problem in a strip domain. Then we study the well-posedness and stability for the reduced problem by the variational method and 
the energy method. In section \ref{sec-pml}, a time-domain PML method is introduced to truncate the interaction problem with two finite layers containing the elastic body, leading to a truncated PML problem 
in a finite strip domain. The well-posedness and stability estimate for the truncated PML problem is further verified. An exponential convergence of the PML method is finally established. Some conclusions 
are given in section \ref{sec6}.

\section{Problem formulation}\label{sec-model}

\begin{figure}[!htbp]
\setcounter{subfigure}{0}
  \centering
  \includegraphics[width=3.5in]{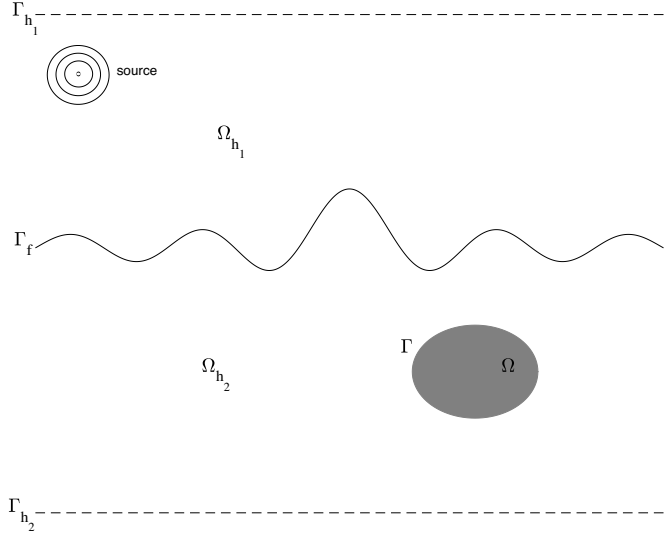}
 \caption{Geometric configuration of the scattering problem}\label{geometry}
\end{figure}

Consider the propagation of an electromagnetic wave  which is excited by an electric current density in a two-layered medium with a bounded elastic body immersed in the lower half-space; see 
the problem geometry in Figure \ref{geometry}. For $x=(x_1,x_2,x_3)^{\top}\in\R^3$, let $\wit{x}=(x_1,x_2)^{\top}\in\R^2$ and
\ben
\Ga_f:=\{x\in\R^3:x_3=f(\wit{x})\}
\enn
be the unbounded rough surface with $f\in C^2(\R^2)$, which 
separates the whole space into a two-layered domain
\ben
\Om_f^+:=\{x\in\R^3:x_3>f(\wit{x})\}\;\;\gand\;\;\Om_f^-:=\{x\in\R^3:x_3<f(\wit{x})\}.
\enn
Here, the electromagnetic medium fills with distinct parameters $\vep, \mu$. We assume that $\Om$ is a bounded domain with Lipschitz-continuous boundary $\Ga:=\pa\Om$ representing a homogeneous 
and isotropic elastic body immersed in the lower medium $\Om_f^-$ and the exterior $\Om^c = \R^3\ba \ov{\Om}$ of $\Om$ is simply connected. Furthermore, we assume $\Om$ to be with a constant mass density $\rho_i>0$, and Lam\'{e} constants $\la_i$, $\mu_i$ satisfying the condition that $\mu_i > 0$ and $3\la_i + 2\mu_i > 0$. Define two artificial planar surfaces
$\Ga_{h_1}:=\{x\in\R^3: x_3=h_1\}$, where $h_1>\sup_{\wit{x}\in\R^2}f(\wit{x})$ is a constant and $\Ga_{h_2}:=\{x\in\R^3: x_3=h_2\}$, where $h_2<0$ is small enough such that $\Om$ is
over plane $\Ga_{h_2}$. Let $\Om_{h_1}:=\{x\in\R^3: f(\wit{x})<x_3<h_1\}$ and $\Om_{h_2}:=\{x\in\R^3: h_2<x_3<f(\wit{x})\}\cap \Om^c$, and $\Om_{h}=\Om_{h_1}\cup \Om_{h_2}\cup\Ga_f$.
In what follows, we denote by $\n$ the unit outward normal vector both on $\Ga$  and $\Ga_f$ as well as $\n_1=(0,0,1)^{\top},\;\n_2 = (0,0,-1)^{\top}$  the unit outward normal vectors on $\Ga_{h_1}$ and 
$\Ga_{h_2}$, respectively. To the end, we define $\Ct_+:=\{s=s_1+is_2\in\Ct$ with $s_1,s_2\in\R$ and $s_1>0\}$ and remark hereafter that the index $j$ is always valued in the set $\{1,2\}$ except special statement.\\

{\bf Elastic wave equation}. In the elastic body $\Om$, the elastic displacement $\bm u$ is governed by the linear elastodynamic equation:
\be\label{2.1}
    \rho_i\frac{\pa^{2}\bm u}{\pa t^{2}}-\De^*\bm u=\0,\;\; \gin\;\;\Om\ti(0,T)
\en
where $\De^*$ is the Lam\'{e} operator defined as
\ben
	\De^*\bm u
	:= \mu_i\De\bm u+(\la_i+\mu_i)\na\dive\bm u
	 = \dive\bm\sig(\bm u).
\enn
In above,
$\bm \sig(\bm u)$ and $\bm \vep(\bm u)$ are called stress and strain tensors respectively, which are given by
\ben
	\bm \sig(\bm u)=(\la_i \dive\bm u)\bm{I}+2\mu_i\bm\vep(\bm u)\qquad\gand\qquad
	\bm\vep(\bm u)=\half(\na\bm u+(\na\bm u)^{\top}).
\enn
Furthermore,  the homogeneous initial conditions are imposed for the elastic wave equation
\be\label{2.1a}
	\bm{u}(x,0)=\frac{\pa\bm{u}}{\pa t}(x,0)=0,\;\; x\in\Om.
\en

{\bf Maxwell's equations}. In the electromagnetic domain $\Om^c$, the electric field $\bm{E}$ and magnetic field $\bm{H}$ satisfy the time-domain Maxwell equations
\be\label{2.2}
    \na\ti \bm{E}+\mu\frac{\pa \bm{H}}{\pa t}=\0,\;\;
    \na\ti \bm{H}-\vep\frac{\pa \bm{E}}{\pa t}=\bm J,\;\;\gin\;\;\Om^c\ti(0,T)
\en
where $\bm J$ is the electric current density which is assumed to be compactly supported in $\Om_h$ and $\bm J|_{t=0}=\0$, the electric permittivity $\vep$ and magnetic permeability $\mu$
are both positive and piece-wise constants:
\be\label{2.2a}
	\vep(x)=\begin{cases}
	     \vep_1,\quad x\in\Om_f^+,\\
	     \vep_2,\quad x\in\Om_f^-\ba\ov{\Om},
	     \end{cases}\quad
	\mu=\begin{cases}
	     \mu_1,\quad x\in\Om_f^+,\\
	     \mu_2,\quad x\in\Om_f^-\ba\ov{\Om}.
	     \end{cases}
\en
On the interface $\Ga_f$ between the two-layered medium, we have the jump conditions
\be\label{jump}
	\n\ti[\bm E]=\n\ti[\mu^{-1}\na\ti\bm E]=\0,\;\;&\on\;\;\Ga_f\ti (0,T)
\en
where $[\cdot]$ stands for the jump of a function across the interface $\Ga_f$. In addition, the homogeneous initial conditions are also imposed for the Maxwell's equations:
\be\label{2.2b}
    \bm{E}(x,0)=\bm{H}(x,0)=\0,\ x\in \Om^c.
\en
Using the Maxwell's system (\ref{2.2}), it is obvious that
\begin{align}\label{2.2c}
    \pa_t\bm E(x,0)=\vep^{-1}(\na\ti \bm{H})(x,0)-\vep^{-1}\bm J(x,0)=\0,&\ x\in \Om^c,\\
    \pa_t\bm H(x,0)=-\mu^{-1}(\na\ti \bm{E})(x,0)=\0,&\ x\in \Om^c.\label{2.2d}
\end{align}
and
\be
    \na\cdot\bm{E}=\na\cdot\bm{H}=0,\;\;\gin\;\Om^c\ti(0,T).
\en
Due to the unbounded structure of the medium, it is no longer valid to impose the usual Silver-M\"uler radiation condition. Instead, we employ the following radiation condition:
the electromagnetic fields $(\bm E,\bm H)$ consist of bounded outgoing waves in $\Om_{h_1}^+$ and $\Om_{h_2}^-$, where $\Om_{h_1}^+=\{x\in\R^3:x_3>h_1\}$ and $\Om_{h_2}^-=\{x\in\R^3:x_3<h_2\}$.\\

{\bf Interface conditions}.
The two medium are coupled by the interface condition (cf. \cite{Cakoni2003}):
\be\label{2.3}
    \bm{H}(x,t)\ti\bm{E}(x,t)\cdot\n=\bm{T}\bm{u}(x,t)\cdot\bm{u}_t(x,t),\qquad\on\;\;\Ga\ti(0,T)
\en
where $\bm{Tu}:=2\mu_i\n\cdot\na\bm{u}+\la_i\n\na\cdot\bm{u}+\mu_i\n\ti(\na\ti\bm{u})$ denotes the elastic surface traction operator.

There are infinite many decomposition of above interface condition (\ref{2.3}). According to the Voigt's model \cite{Maugin1988}, the stress tensor is proportional to the magnetic field which leads to the 
following decomposition (see \cite{wyz-amas-2020})
\be\label{2.4}
    \n\ti\bm{H}=\bm{Tu}\quad \gand\quad \n\ti\bm{E}=\n\ti\bm{u}_t, \;\;\on\;\;\Ga\ti(0,T).
\en
Note that it is easily checked out that (\ref{2.4}) implies (\ref{2.3}).

\section{The well-posedness of scattering problem}\label{sec-well_posed}

In this section, we firstly introduce two exact time-domain transparent boundary conditions (TBCs) on the artificial plane surfaces to reformulate the scattering problem into an initial-boundary
value problem in a finite strip domain. Then, we will show the well-posedness for the reduced problem in $s$-domain by the method of
Laplace transform and the Lax-Milgram lemma. To the end, the existence, uniqueness, and stability for the reduced problem in the time domain shall be verified by using the abstract inversion theorem 
of the Laplace transform, and the energy argument.

\subsection{Transparent boundary conditions.}
In this subsection, we start by introducing two transparent boundary conditions (TBCs) on the artificial planar surfaces (cf. \cite{GL2017}):
\be\label{TBCs1}
    \mathscr{T}_j[\bm{E}_{\Ga_{h_j}}]=\bm{H}\ti\n_j,&\quad\on\;\;\Ga_{h_j}\ti(0,T),\quad j=1,2,
\en
which maps the tangential component of electric field $\bm E$ to the tangential trace of magnetic field
 $\bm H$ on $\Ga_{h_j}$. Then the time-dependent electromagnetic-elastic wave interaction problem 
can be reduced to an equivalent initial boundary value problem in the strip domain $\Om_h$:
\be\label{2.7}
    \begin{cases}
        \ds\rho_i\frac{\pa^{2}\bm u}{\pa t^{2}}-\De^*\bm u=\0,& \gin\;\; \Om\ti(0,T) \\
        \ds\na\ti \bm{E}+\mu \pa_t\bm{H}=\0,&\gin\;\;  \Om_h\ti(0,T) \\
        \ds\na\ti \bm{H}-\vep \pa_t\bm{E}=\bm J,&\gin\;\;  \Om_h\ti(0,T)\\
        % \na\cdot\bm{E}=\na\cdot\bm{H}=0,&\gin\;\;  \Om_h\ti(0,T) \\
        \ds\bm u(x,0)=\pa_t\bm u(x,0)=\0,& \gin\;\;  \Om\\
        \ds\bm{E}(x,0)=\bm{H}(x,0)=\0,&\gin\;\;  \Om_h\\
        \ds\n\ti[\bm E]=\n\ti[\mu^{-1}\na\ti\bm E]=\0,&\on\;\;\Ga_f\ti(0,T)\\
		% \ds\n\ti[\bm H]=\n\ti[\vep^{-1}\na\ti\bm H]=\0, &\on\;\;\Ga_f\ti(0,T),\\
        \ds\n\ti\bm{H}=\bm{Tu},\;\n\ti\bm{E}=\n\ti\bm{u}_t, &\on\;\; \Ga\ti(0,T)\\
        % \ds\n\ti\bm{E}=\n\ti\bm{u}_t,&\on\;\; \Ga\ti(0,T),\\
        \mathscr{T}_j[\bm{E}_{\Ga_{h_j}}]=\bm{H}\ti\n_j,&\on\;\; \Ga_{h_j}\ti(0,T),
        \quad j=1,2.
    \end{cases}
\en
Taking the Laplace transform of $(\ref{2.7})$ and employing (\ref{A.2}) together with initial conditions (\ref{2.1a}) and (\ref{2.2b}), we obtain the time harmonic electromagnetic-elastic interaction
 problem in s-domain:
\be\label{2.8}
    \begin{cases}
        \ds\De^*\ch{\bm u}-\rho_is^2\ch{\bm u}=\0,& \gin\;\; \Om\\
        \ds\na\ti \ch{\bm{E}}+\mu s \ch{\bm{H}}=\0,&\gin\;\;  \Om_h\\
        \ds\na\ti \ch{\bm{H}}-\vep s \ch{\bm{E}}=\ch{\bm J},&\gin\;\;  \Om_h\\
        \ds\n\ti[\ch{\bm E}]=\n\ti[\mu^{-1}\na\ti\ch{\bm E}]=\0,&\on\;\;\Ga_f\\
		% \n\ti[\ch{\bm H}]=\n\ti[\vep^{-1}\na\ti\ch{\bm H}]=\0, &\on\;\;\Ga_f\\
        \ds\n\ti\ch{\bm{H}}=\bm{T\ch{u}},\;\n\ti\ch{\bm{E}}=\n\ti s\ch{\bm{u}}&\on\;\; \Ga\\
        % \ds\n\ti\ch{\bm{E}}=\n\ti s\ch{\bm{u}},&\on\;\; \Ga\\
        \ds\mathscr{B}_j[\ch{\bm{E}}_{\Ga_{h_j}}]=\ch{\bm{H}}\ti\n_j,&\on\;\; \Ga_{h_j},\quad j=1,2,
    \end{cases}
\en
where $s\in\Ct^+$, and $\mathscr{B}_j$ is the electric-to-magnetic (EtM) capacity operators on $\Ga_{h_j}$ in s-domain satisfying $\mathscr{T}_j=\mathscr{L}^{-1}\circ\mathscr{B}_j\circ\mathscr{L}$.

In \cite{GL2017}, Y. Gao and P. Li derived the formulation of the EtM operators $\mathscr{B}_j$ and showed some of  important properties including boundness and coercivity.
 Here, we present the main results of TBCs in \cite{GL2017} without detailed proof. The explicit representations of EtM operators $\mathscr{B}_j$ take the following form:
for any tangential vector $\bm\om=(\om_1,\om_2,0)^{\top}\;\on\;\Gamma_{h_j}$, 
\be\label{dtn}
\mathscr{B}_j[\bm\om]=(v_1,v_2,0)^{\top},
\en
where
\ben
	\wih{v}_1
	%&=(\mu_js)^{-1}[-\frac{\xi_1}{\beta_j(\xi)}(\xi_1\wih{\om}_1+\xi_2\wih{\om}_2)
	%+\beta_j(\xi)\wih{\om}_1]\\
	&=\frac{1}{\mu_js\beta_j(\xi)}\left[\vep_j\mu_js^2\wih{\om}_1
	+\xi_2(\xi_2\wih{\om}_1-\xi_1\wih{\om}_2)\right],\\
	%\enn
	%and
	%\ben
	\wih{v}_2
	%&=(\mu_js)^{-1}[-\frac{\xi_2}{\beta_j(\xi)}(\xi_1\wih{\om}_1+\xi_2\wih{\om}_2)
	%+\beta_j(\xi)\wih{\om}_2]\\
	&=\frac{1}{\mu_js\beta_j(\xi)}\left[\vep_j\mu_js^2\wih{\om}_2
	+\xi_1(\xi_1\wih{\om}_2-\xi_2\wih{\om}_1)\right],
\enn
where $\wih{v}_j$ denotes the Fourier transform of $v_j$ with respect to $\wit{x}$ (see Appendix \ref{ap1} for the definition of Fourier transform), and
\be\label{beta}
\beta_j(\xi)=(\vep_j\mu_js^2+|\xi|^2)^{1/2},\quad {\rm with}\;\Rt[\beta_j(\xi)]>0.
\en
 For convenience, we eliminate the magnetic field $\ch{\bm H}$ and get the TBCs for electric field $\ch{\bm E}$ in the s-domain and time domain, respectively:
\begin{align}\label{2.11}
    (\mu_js)^{-1}(\na\ti\ch{\bm{E}})\ti\n_j+\mathscr{B}_j[\ch{\bm{E}}_{\Ga_{h_j}}]=\0,\ \on\;\; \Ga_{h_j},
    \\ \label{2.13}
    (\mu_j)^{-1}(\na\ti\bm{E})\ti\n_j+\mathscr{C}_j[\bm{E}_{\Ga_{h_j}}]=\0,\ \on\;\; \Ga_{h_j},
\end{align}
where $\mathscr{C}_j=\mathscr{L}^{-1}\circ s\mathscr{B}_j\circ\mathscr{L}$.

The following lemma on the boundedness and coercivity of $\mathscr{B}_j$ plays a key role in the proof of the well-posedness which has been shown in \cite{GL2017}.

\begin{lem}\label{lem2.1}
    For $j=1,2$, $\mathscr{B}_j$ is continuous from $H^{-1/2}(\curl,\Ga_{h_j})$ to $H^{-1/2}(\dive,\Ga_{h_j})$
    (see Appendix \ref{ap2} for the definition of the trace spaces).
    Moreover, for any $\bm\om\in H^{-1/2}(\curl,\Ga_{h_j})$, we have
    \ben
    	\Rt\langle\mathscr{B}_j\bm\om,\bm\om\rangle_{\Ga_{h_j}}\geq 0.
    \enn
\end{lem}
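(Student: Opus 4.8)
\emph{Proof plan.} Since $\Ga_{h_j}$ is a plane, $\mathscr{B}_j$ is a Fourier multiplier, so the plan is to read off both assertions from the explicit symbol \eqref{dtn} after passing to the Fourier side. I would first recall from Appendix \ref{ap2} the Fourier characterization of the trace spaces: for a tangential field $\bm\om$ on $\Ga_{h_j}\cong\R^2$, write $\wih{\bm\om}(\xi)=a(\xi)\,\hat\xi+b(\xi)\,\hat\xi^{\perp}$ in the pointwise orthonormal frame $\hat\xi=\xi/|\xi|$, $\hat\xi^{\perp}=(\xi_2,-\xi_1)^{\top}/|\xi|$ (equivalently the Hodge splitting $\bm\om=\na_{\Ga}p+\Curl_{\Ga}q$, so that $\wih{\dive_{\Ga}\bm\om}\propto|\xi|\,a$ and $\wih{\curl_{\Ga}\bm\om}\propto|\xi|\,b$); then, up to norm equivalence, $\|\bm\om\|_{H^{-1/2}(\curl,\Ga_{h_j})}^2\sim\int_{\R^2}(1+|\xi|^2)^{-1/2}|a|^2\,d\xi+\int_{\R^2}(1+|\xi|^2)^{1/2}|b|^2\,d\xi$, while $\|\bm\om\|_{H^{-1/2}(\dive,\Ga_{h_j})}^2$ is the same expression with $a$ and $b$ interchanged, and the duality pairing reduces by Plancherel to $\langle\bm\phi,\bm\om\rangle_{\Ga_{h_j}}=\int_{\R^2}\wih{\bm\phi}\cdot\overline{\wih{\bm\om}}\,d\xi$ (in the orientation for which \eqref{dtn} is written).

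The key observation is that the symbol diagonalizes in this frame. A short computation from \eqref{dtn} shows the $2\times2$ multiplier equals $\big(\mu_js\,\beta_j(\xi)\big)^{-1}\big(\vep_j\mu_js^2\,I+\xi^{\perp}(\xi^{\perp})^{\top}\big)$ with $\xi^{\perp}=(\xi_2,-\xi_1)^{\top}$; since $\xi^{\perp}(\xi^{\perp})^{\top}$ annihilates $\hat\xi$ and acts as $|\xi|^2$ on $\hat\xi^{\perp}$, and $\vep_j\mu_js^2+|\xi|^2=\beta_j(\xi)^2$ by \eqref{beta}, the multiplier acts as the scalar $m^{\parallel}(\xi)=\vep_js/\beta_j(\xi)$ on the $\na_\Ga p$ component and as $m^{\perp}(\xi)=\beta_j(\xi)/(\mu_js)$ on the $\Curl_\Ga q$ component. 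Hence $\mathscr{B}_j$ decouples into two scalar Fourier multipliers, and everything reduces to pointwise bounds on $m^{\parallel},m^{\perp}$.

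For continuity I would check, with constants depending on $s,\vep_j,\mu_j$, that $|m^{\parallel}(\xi)|^2\les(1+|\xi|^2)^{-1}$ and $|m^{\perp}(\xi)|^2\les 1+|\xi|^2$: the second is immediate from $|\beta_j(\xi)|^2\le\vep_j\mu_j|s|^2+|\xi|^2$, and the first from the fact that $\vep_j\mu_js^2+|\xi|^2\in\Ct\setminus(-\ify,0]$ for $s\in\Ct_+$ (so $\beta_j$ never vanishes) together with $|\beta_j(\xi)|^2/|\xi|^2\to1$ as $|\xi|\to\ify$, whence $\inf_\xi|\beta_j(\xi)|^2/(1+|\xi|^2)>0$. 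Substituting into the weighted $L^2$-expressions above, $m^{\parallel}$ and $m^{\perp}$ shift the Fourier weight by exactly $(1+|\xi|^2)^{\pm1}$ in the direction that matches $H^{-1/2}(\curl,\Ga_{h_j})\to H^{-1/2}(\dive,\Ga_{h_j})$, giving $\|\mathscr{B}_j\bm\om\|_{H^{-1/2}(\dive,\Ga_{h_j})}\le C(s)\|\bm\om\|_{H^{-1/2}(\curl,\Ga_{h_j})}$. For the coercivity, Plancherel gives $\Rt\langle\mathscr{B}_j\bm\om,\bm\om\rangle_{\Ga_{h_j}}=\int_{\R^2}\big(\vep_j\,\Rt(s/\beta_j(\xi))\,|a|^2+\mu_j^{-1}\,\Rt(\beta_j(\xi)/s)\,|b|^2\big)d\xi$, so it suffices to show $\Rt(s/\beta_j(\xi))\ge0$ and $\Rt(\beta_j(\xi)/s)\ge0$ for all $s\in\Ct_+$ and $\xi\in\R^2$. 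This is where the branch $\Rt\beta_j>0$ in \eqref{beta} enters: from $\Im(\beta_j(\xi)^2)=\vep_j\mu_j\Im(s^2)=2\vep_j\mu_j\,\Rt(s)\,\Im(s)$ and $\Rt\beta_j>0$ one sees $\Im\beta_j(\xi)$ has the same sign as $\Im s$, hence $\Im(s)\,\Im(\beta_j(\xi))\ge0$, hence $\Rt\big(s\,\overline{\beta_j(\xi)}\big)=\Rt(s)\Rt(\beta_j(\xi))+\Im(s)\Im(\beta_j(\xi))>0$, which yields both sign conditions at once and thus $\Rt\langle\mathscr{B}_j\bm\om,\bm\om\rangle_{\Ga_{h_j}}\ge0$.

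The calculational parts — the Fourier bookkeeping and the two symbol bounds — should be unproblematic. The two places that genuinely need care are (i) matching the precise Fourier definitions of the $H^{-1/2}(\curl,\Ga_{h_j})$- and $H^{-1/2}(\dive,\Ga_{h_j})$-norms and of the duality pairing from Appendix \ref{ap2}, so that the weight shift produced by $m^{\parallel},m^{\perp}$ is exactly the one required; and (ii) the complex-analytic sign estimate $\Rt(s/\beta_j)\ge0$, which is the engine behind the coercivity and hinges entirely on the choice of square-root branch in \eqref{beta}.
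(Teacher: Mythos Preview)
Your argument is correct and is the standard Fourier-multiplier proof for planar EtM operators: the diagonalization of the symbol in the $(\hat\xi,\hat\xi^{\perp})$ frame, the weight-shift bounds $|m^{\parallel}|^2\les(1+|\xi|^2)^{-1}$, $|m^{\perp}|^2\les1+|\xi|^2$, and the sign argument $\Rt(s\,\ov{\beta_j})>0$ from the branch choice $\Rt\beta_j>0$ all go through exactly as you outline (in fact, with the norms in Appendix~\ref{ap2} your Hodge-type decomposition yields the stated norm identities exactly, not merely up to equivalence).

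There is no comparison to make with the paper itself: the paper does not prove Lemma~\ref{lem2.1} but defers to \cite{GL2017}, where the result is established by precisely this Fourier-side computation. Your write-up is a faithful, self-contained reconstruction of that argument; the only point to tidy is that the ``genuine care'' items (i) and (ii) you flag are in fact already resolved by your own calculations once one plugs in the explicit norm definitions from Appendix~\ref{ap2} and the branch convention in~\eqref{beta}.
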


% \section{The reduced problem}
% %\setcounter{equation}{0}

% In this section, we shall prove the well-posedness and stability of the reduced problem (\ref{2.7}) 
%and establish an a priori estimate in the end. For this purpose, we take two steps to finish our proof
% about well-posedness in time domain: we firstly obtain the well-posedness in s-domain, then apply 
% Lemma \ref{A.1} and energy method to deduce the main result. As for the prior estimate, we take 
% special test functions in the time-domain variational problem to achieve it.

\subsection{Well-posedness in s-domain}
Eliminating the magnetic field $\ch{\bm H}$ in (\ref{2.8}), we consider the reduced vector boundary value problem
\begin{subnumcases}{}\label{3.1a}
    \De^*\ch{\bm u}-\rho_is^2\ch{\bm u}=\0,& $\gin\;\; \Om$\\ \label{3.1b}
    \na\ti((s\mu)^{-1}\na\ti \ch{\bm{E}})+s\vep\ch{\bm{E}}=-\ch{\bm J},&$\gin\;\;  \Om_h$\\ \label{3.1c}
     \n\ti[\ch{\bm E}]=\n\ti[\mu^{-1}\na\ti\ch{\bm E}]=\0,&$\on\;\;\Ga_f$\\ \label{3.1c1}
    -(\mu_2s)^{-1}\n\ti\na\ti\ch{\bm{E}}=\bm{T\ch{u}},&$\on\;\; \Ga$\\ \label{3.1d}
    \n\ti\ch{\bm{E}}=\n\ti s\ch{\bm{u}},&$\on\;\; \Ga$\\ \label{3.1e}
    (\mu_js)^{-1}(\na\ti\ch{\bm{E}})\ti\n_j+\mathscr{B}_j[\ch{\bm{E}}_{\Ga_{h_j}}]=\0,&$\on\;\; \Ga_{h_j},\quad j=1,2,$
\end{subnumcases}
%We shall prove that Problem (\ref{3.1a})-(\ref{3.1f}) is well-posed
in the Hilbert space $\mathscr{X}_s:=\left\{(\bm V,\bm v)\in H(\curl,\Om_h)\ti H^1(\Om)^3,\;\n\ti\bm V=\n\ti s\bm v,\;\on\;\;\Ga\right\}$ under the norm
\be\label{3.1+}
	\V(\bm V,\bm{v})\V_{\mathscr{X}_s}:=\left(\V\bm V\V_{H(\curl,\Om_h)}^2+\V\bm v\V_{H^1(\Om)^3}^2\right)^{1/2}.
\en
We shall prove the well-posedness of problem (\ref{3.1a})-(\ref{3.1e}) in $\mathscr{X}_s$ by the Lax-Milgram lemma. To this end, we derive the variational formulation of (\ref{3.1a})-(\ref{3.1e}) by multiplying (\ref{3.1b}) and (\ref{3.1a}) with the complex conjugates of a pair of test functions ($\bm V,\bm{v})\in\mathscr{X}_s$, respectively, and applying integration by part, coupling interface condition (\ref{3.1c1}), and TBCs (\ref{3.1e}). Hence, the variational formulation of (\ref{3.1a})-(\ref{3.1e}) reads as follows: find a solution $(\ch{\bm E},\ch{\bm{u}})\in\mathscr{X}_s$ such that
\begin{align}\label{3.2}
      &\int_{\Om_h}(s\mu)^{-1}(\na\ti\ch{\bm{E}})\cdot(\na\ti\ov{\bm V})dx+\int_{\Om_h}s\vep\ch{\bm{E}}\cdot\ov{\bm V}dx\\ \no
      &+\int_{\Ga}(s\mu_2)^{-1}\na\ti\ch{\bm{E}}\ti\n\cdot\ov{\bm V}d\g+\sum_{j=1}^2\langle\mathscr{B}_j[\ch{\bm{E}}_{\Ga_{h_j}}],
        \bm{V}_{\Ga_{h_j}}\rangle_{\Ga_{h_j}}
      =-\int_{\Om_h}\ch{\bm J}\cdot\ov{\bm{V}}dx,
\end{align}
and
\begin{align}
    &\int_\Om\left[\ov{s}\Big(\la_i(\na\cdot\ch{\bm{u}})(\na\cdot\ov{\bm{v}})
	+2\mu_i\bm\vep(\ch{\bm{u}}):\bm\vep(\ov{\bm{v}})\Big)
    +\rho_is|s|^2\ch{\bm{u}}\cdot\ov{\bm v}\right]dx \no \\ \label{3.3}
    &-\int_{\Ga}(s\mu_2)^{-1}\na\ti\ch{\bm{E}}\ti\n\cdot\ov{s\bm v}d\g=0,
\end{align}
where $A:B=tr(AB^{\top})$ denotes the Frobenius inner product of square matrices A and B. Adding (\ref{3.3}) to (\ref{3.2}) gives the final variational form:
\be\label{3.4}
    a\left((\ch{\bm{E}},\ch{\bm{u}}),(\bm{V},\bm{v})\right)=-\int_{\Om_h}\ch{\bm J}\cdot\ov{\bm{V}}dx,
\en
where the sesquilinear form $a(\cdot,\cdot)$ is defined as
\begin{align}\label{3.5}
    a\left((\ch{\bm{E}},\ch{\bm{u}}),(\bm{V},\bm{v})\right)&=\int_{\Om_h}\big((s\mu)^{-1}(\na\ti\ch{\bm{E}})\cdot
    (\na\ti\ov{\bm V})dx+s\vep\ch{\bm{E}}\cdot\ov{\bm V}\big)dx \\ \no
    &+\int_\Om\Big[\ov{s}\mathcal{E}(\ch{\bm u},\ov{\bm v})+\rho_is|s|^2\ch{\bm{u}}\cdot\ov{\bm v}\Big]dx 
    +\sum_{j=1}^2\langle\mathscr{B}_j[\ch{\bm{E}}_{\Ga_{h_j}}],\bm{V}_{\Ga_{h_j}}\rangle_{\Ga_{h_j}}. 
\end{align}
Here, the bilinear form $\mathcal{E}(\bm u,\bm v)$ is defined by
\begin{align}\label{vep}
		\mathcal{E}(\bm u,\bm v) & := \la_i (\dive\;{\bm u}) (\dive\;{\bm v})
		+ 2 \mu_i {\bm \vep}({\bm u}):{\bm \vep}({\bm v}) \\ \no
		& = 2\mu_i\Big(\sum_{i,j=1}^{3}\pa_iu_j\pa_iv_j\Big)
		+\la_i(\dive\;\bm u)(\dive\;\bm v)-\mu_i\curl\;\bm u\cdot\curl\;\bm v.
	\end{align}
Under our assumptions on the Lam\'e constants: $\mu>0,3\la+2\mu>0$, we have the estimate (see \cite[Chap. 5.4]{Hsiao2008})
\be\label{estimate_vep}
	\int_{\Om} \mathcal{E}(\bm u,\ov{\bm u}) dx \ge C_\Om \|{\bm \vep}({\bm u})\|_{F(\Om)}^2,
\en
where the positive constant $C_\Om$ only depends on $\Om$, and $\|{\bm \vep}({\bm u})\|_{F(\Om)}$ denotes the Frobenius norm defined by
\ben
    \|{\bm \vep}({\bm u})\|_{F(\Om)}:=\Big(\sum_{i,j=1}^3\|\vep_{ij}(\bm u)\|_{L^2(\Om)}^2\Big)^{1/2}.
\enn
\begin{lem}\label{thm3.1}
    For each $s\in\Ct_+$, the variational problem \eqref{3.4} has a unique solution
     $(\ch{\bm{E}},\ch{\bm{u}})\in\mathscr{X}_s$ which
    satisfies the following estimates:
    \begin{align}
        &\|\na\ti\ch{\bm E}\|_{L^2(\Om_h)^3}+\| s\ch{\bm E}\|_{L^2(\Om_h)^3}\lesssim
        s_1^{-1}\|s\ch{\bm J}\|_{L^2(\Om_h)^3},\label{3.6}\\
        &\|\na\ch{\bm{u}}\|_{F(\Om)}+\| \na\cdot\ch{\bm{u}}\|_{L^2(\Om)}+\| s\ch{\bm{u}}\|_{{L^2(\Om)}^3}
        \lesssim s_1^{-1}\max\{1,s_1^{-1}\}\|\ch{\bm J}\|_{L^2(\Om_h)^3}.\label{3.7}
    \end{align}
    % where $\|\na\ch{{\bm u}}\|_{F(\Om)}$ denotes the Frobenius norm defined by
    % \ben
    %     \|\na\ch{{\bm u}}\|_{F(\Om)}:=\Big(\sum_{j=1}^3\int_{\Om}|\na\ch{{\bm u}}_j|^2dx\Big)^{1/2}.
    % \enn
   Hereafter, the expression $a\lesssim b$ or $a\gtrsim b$ stands for $a\le C b$ or $a\ge C b$, where $C$ is a positive constant and its specific value is not required but should be always clear from the context.
\end{lem}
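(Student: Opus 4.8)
The plan is to apply the Lax-Milgram lemma to the sesquilinear form $a(\cdot,\cdot)$ of \eqref{3.5} on the Hilbert space $\mathscr{X}_s$, and then to extract the bounds \eqref{3.6}--\eqref{3.7} by testing \eqref{3.4} against the solution itself. As preliminaries I would record that $\mathscr{X}_s$ is a closed subspace of $H(\curl,\Om_h)\ti H^1(\Om)^3$, hence a Hilbert space, the constraint $\n\ti\bm V=\n\ti s\bm v$ on $\Ga$ being continuous for the tangential trace on $H(\curl,\Om_h)$ and the Dirichlet trace on $H^1(\Om)^3$; and that, for each fixed $s\in\Ct_+$, the form $a$ is bounded on $\mathscr{X}_s\ti\mathscr{X}_s$ while the antilinear functional $(\bm V,\bm v)\mapsto-\int_{\Om_h}\ch{\bm J}\cdot\ov{\bm V}\,dx$ is bounded on $\mathscr{X}_s$. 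The latter uses only the Cauchy-Schwarz inequality on the volume terms of \eqref{3.5}, the trace theorem, the continuity of $\mathscr{B}_j:H^{-1/2}(\curl,\Ga_{h_j})\to H^{-1/2}(\dive,\Ga_{h_j})$ from Lemma~\ref{lem2.1}, and $\ch{\bm J}\in L^2(\Om_h)^3$ (which holds for $s\in\Ct_+$ by the hypotheses on $\bm J$).

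The core of the argument is coercivity. For $(\bm V,\bm v)\in\mathscr{X}_s$ I would put $(\bm V,\bm v)$ into both arguments of $a$ and take the real part. The structurally important point is that $a$ in \eqref{3.5} carries \emph{no} boundary integral over $\Ga$: the two contributions $\int_\Ga(s\mu_2)^{-1}(\na\ti\ch{\bm E}\ti\n)\cdot\ov{\bm V}\,d\g$ from \eqref{3.2} and $-\int_\Ga(s\mu_2)^{-1}(\na\ti\ch{\bm E}\ti\n)\cdot\ov{s\bm v}\,d\g$ from \eqref{3.3} cancel, because for admissible test pairs $\ov{\bm V}-\ov{s\bm v}$ has vanishing tangential trace on $\Ga$ while $(\na\ti\ch{\bm E})\ti\n$ is tangential --- this is precisely the analytic content of the Voigt-type factorization \eqref{2.4} encoded in $\mathscr{X}_s$. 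Consequently, $a$ on the diagonal reduces to manifestly sign-controlled volume terms plus the EtM pairings, and using $\Rt[(s\mu)^{-1}]=s_1(|s|^2\mu)^{-1}>0$, $\Rt[s\vep]=s_1\vep>0$, $\Rt[\rho_is|s|^2]=\rho_is_1|s|^2>0$, the reality of $\int_\Om\mathcal{E}(\bm v,\ov{\bm v})\,dx$ together with $\Rt[\ov s]=s_1>0$, and $\Rt\langle\mathscr{B}_j[\bm V_{\Ga_{h_j}}],\bm V_{\Ga_{h_j}}\rangle_{\Ga_{h_j}}\ge0$ from Lemma~\ref{lem2.1}, I expect to reach
\ben
	\Rt\,a((\bm V,\bm v),(\bm V,\bm v))
	&\gtrsim \frac{s_1}{|s|^2}\|\na\ti\bm V\|_{L^2(\Om_h)^3}^2
	+s_1\|\bm V\|_{L^2(\Om_h)^3}^2\\
	&\quad+s_1\int_\Om\mathcal{E}(\bm v,\ov{\bm v})\,dx
	+s_1|s|^2\|\bm v\|_{L^2(\Om)^3}^2 .
\enn
Bounding $\int_\Om\mathcal{E}(\bm v,\ov{\bm v})\,dx$ below by $C_\Om\|\bm\vep(\bm v)\|_{F(\Om)}^2$ via \eqref{estimate_vep} and then applying the second Korn inequality $\|\bm v\|_{H^1(\Om)^3}^2\lesssim\|\bm\vep(\bm v)\|_{F(\Om)}^2+\|\bm v\|_{L^2(\Om)^3}^2$ (valid on the bounded Lipschitz domain $\Om$), the right-hand side is $\gtrsim c(s)\V(\bm V,\bm v)\V_{\mathscr{X}_s}^2$ with $c(s)\gtrsim s_1\min\{|s|^{-2},1,|s|^2\}>0$, the implied constant depending only on $\Om$ and the material parameters. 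The coercivity constant degenerates as $|s|\to\ify$ or $s_1\to0$, but this is harmless: Lax-Milgram applies at each fixed $s$ and yields a unique solution $(\ch{\bm E},\ch{\bm u})\in\mathscr{X}_s$ of \eqref{3.4}.

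For the stability estimates I would take $(\bm V,\bm v)=(\ch{\bm E},\ch{\bm u})$ in \eqref{3.4}, pass to real parts, bound the left side below by the coercivity estimate above, and bound the right side by $\|\ch{\bm J}\|_{L^2(\Om_h)^3}\|\ch{\bm E}\|_{L^2(\Om_h)^3}$ followed by Young's inequality so that the resulting $\|\ch{\bm E}\|_{L^2(\Om_h)^3}^2$ term is absorbed on the left. This should leave
\ben
	\frac{s_1}{|s|^2}\|\na\ti\ch{\bm E}\|_{L^2(\Om_h)^3}^2
	&+s_1\|\ch{\bm E}\|_{L^2(\Om_h)^3}^2
	+s_1\|\bm\vep(\ch{\bm u})\|_{F(\Om)}^2\\
	&+s_1|s|^2\|\ch{\bm u}\|_{L^2(\Om)^3}^2
	\ \lesssim\ s_1^{-1}\|\ch{\bm J}\|_{L^2(\Om_h)^3}^2 ,
\enn
from which \eqref{3.6} is immediate (using $\|s\ch{\bm E}\|_{L^2}=|s|\,\|\ch{\bm E}\|_{L^2}$ and $s_1^{-1}\|s\ch{\bm J}\|_{L^2}=|s|s_1^{-1}\|\ch{\bm J}\|_{L^2}$), as is the $\|s\ch{\bm u}\|_{L^2(\Om)^3}$ part of \eqref{3.7}. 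The remaining terms of \eqref{3.7} follow from one more use of Korn's inequality: $\|\na\ch{\bm u}\|_{F(\Om)}^2\lesssim\|\bm\vep(\ch{\bm u})\|_{F(\Om)}^2+\|\ch{\bm u}\|_{L^2(\Om)^3}^2\lesssim s_1^{-2}(1+|s|^{-2})\|\ch{\bm J}\|_{L^2(\Om_h)^3}^2$, and since $|s|\ge s_1$ one has $1+|s|^{-2}\lesssim\max\{1,s_1^{-2}\}$; together with $\|\na\cdot\ch{\bm u}\|_{L^2(\Om)}\le\sqrt3\,\|\na\ch{\bm u}\|_{F(\Om)}$ this gives \eqref{3.7}.

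I expect the only genuinely delicate steps to be (i) the vanishing of the $\Ga$-interface contributions, which rests on encoding the factorization $\n\ti\bm V=\n\ti s\bm v$ in the energy space and on the fact that $(\na\ti\ch{\bm E})\ti\n$ pairs only with the tangential trace, and (ii) the bookkeeping of the powers of $s$ and $s_1$ needed to match \eqref{3.6}--\eqref{3.7} exactly --- notably the factor $\max\{1,s_1^{-1}\}$ in \eqref{3.7}, which is produced solely by Korn's inequality in the regime $|s|\to0$. Everything else is routine Cauchy-Schwarz/Young estimation.
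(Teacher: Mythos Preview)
Your proposal is correct and follows essentially the same approach as the paper: Lax--Milgram for existence and uniqueness via continuity and coercivity of $a(\cdot,\cdot)$ (using Lemma~\ref{lem2.1} for the sign of the EtM pairings, \eqref{estimate_vep} and Korn's inequality for the elastic part), followed by testing \eqref{3.4} with the solution itself and absorbing via Young's inequality to extract \eqref{3.6}--\eqref{3.7}. The only cosmetic difference is ordering: the paper applies Korn inside the coercivity step to reach \eqref{3.12} directly with the factor $s_1\min\{1,s_1^2\}$ in front of the full elastic norm, whereas you keep $\|\bm\vep(\ch{\bm u})\|_{F(\Om)}^2$ and $|s|^2\|\ch{\bm u}\|_{L^2(\Om)^3}^2$ separate and invoke Korn afterwards; both routes produce the same $\max\{1,s_1^{-1}\}$ in \eqref{3.7}.
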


\begin{proof}
    i) By Cauchy-Schwartz inequality, the boundness of $\mathscr{B}_j$ and Lemma \ref{lemB.2}, it follows that
    \ben
        \left|a\left((\ch{\bm{E}},\ch{\bm{u}}),(\bm V,\bm v)\right)\right|
        &\lesssim|s|^{-1}\|\na\ti\ch{\bm{E}}\|_{L^2(\Om_h)^3}\|\na\ti\bm V\|_{L^2(\Om_h)^3} \\
        &\quad+|s|\|\ch{\bm{E}}\|_{L^2(\Om_h)^3}\|\bm V\|_{L^2(\Om_h)^3}
        +|s|\|\na\cdot\ch{\bm{u}}\|_{L^2(\Om)}\|\na\cdot\bm v\|_{L^2(\Om)} \\
        &\quad+|s|^3\|\ch{\bm{u}}\|_{{L^2(\Om)}^3}\|\bm v\|_{{L^2(\Om)}^3}
        +|s|\|\na\ch{\bm{u}}\|_{F(\Om)}\|\na\bm v\|_{F(\Om)}  \\
        &\quad+\sum_{j=1}^2\|\mathscr{B}_j[\ch{\bm{E}}_{\Ga_{h_j}}]\|_{H^{-1/2}(\dive,\Ga_{h_j})}
        \|\bm V_{\Ga_{h_j}}\|_{H^{-1/2}(\curl,\Ga_{h_j})}  \\
        % +|s|^{-1}\|\g_t(\na\ti\ch{\bm{E}})\|_{H^{-1/2}(\Dive,\Ga)}  \no\\
        % &\quad\|\g_T\bm V\|_{H^{-1/2}(\Curl,\Ga)}
        % +\|\g_t(\na\ti\ch{\bm{E}})\|_{H^{-1/2}_{\parallel}(\Ga)}\|\g_T\bm v\|_{H^{1/2}_{\parallel}(\Ga)}  \no\\
        &\lesssim\|\ch{\bm{E}}\|_{H(\curl,\Om_h)}\|\bm V\|_{H(\curl,\Om_h)}+\|\ch{\bm{u}}\|_{H^1(\Om)^3}\|\bm v\|_{H^1(\Om)^3},
    \enn
    which yields that $a(\cdot,\cdot)$ is continuous in the product space $\mathscr{X}_s\ti\mathscr{X}_s$.

    ii) $a(\cdot,\cdot)$ is uniformly coercive. In fact, setting $(\bm{V},\bm{v}):=(\ch{\bm{E}},\ch{\bm{u}})$ in (\ref{3.5}) yields
    \begin{align}\label{3.9}
        a\left((\ch{\bm{E}},\ch{\bm{u}}),(\ch{\bm{E}},\ch{\bm{u}})\right)&=\int_{\Om_h}\big((s\mu)^{-1}|
        \na\ti\ch{\bm{E}}|^2dx+s\vep|\ch{\bm{E}}|^2\big)dx  \\\no
        &+\int_\Om\Big[\ov{s}\mathcal{E}(\ch{\bm u},\ov{\ch{\bm u}})+\rho_is|s\ch{\bm{u}}|^2\Big]dx
        +\sum_{j=1}^2\langle\mathscr{B}_j[\ch{\bm{E}}_{\Ga_{h_j}}],\ch{\bm{E}}_{\Ga_{h_j}}\rangle_{\Ga_{h_j}}. 
    \end{align}
    Define $\mu_{max}:=\max\{\mu_1,\mu_2\}, \vep_{min}:=\min\{\vep_1,\vep_2\}$.
    Combining the estimate \eqref{estimate_vep}
    and the well-known Korn's inequality \cite[Lemma 5.4.4]{Hsiao2008}
    \be\label{korn}
    \|\bm\vep(\bm v)\|^2_{F(\Om)}+\|\bm v\|^2_{L^2(\Om)^3}\geq C_{\Om}\|\bm v\|^2_{H^1(\Om)^3}, \qquad \forall\;\bm v\in H^1(\Om)^3
    \en
    then taking the real part of (\ref{3.9}) and using Lemma \ref{lem2.1}, we have
	\begin{align}
	    \Rt[a((\ch{\bm{E}},\ch{\bm{u}}),(\ch{\bm{E}},\ch{\bm{u}}))]
	    &\geq\frac{s_1}{|s|^2}(\mu_{max}^{-1}\|\na\ti\ch{ \bm E}\|_{L^2(\Om_h)^3}^2
	    +\vep_{min}\| s\ch{\bm E}\|_{L^2(\Om_h)^3}^2) \no\\
	    &\quad+s_1(C_{\Om}\|\bm\vep(\ch{\bm u})\|^2_{F(\Om)}
	    +\rho_i\|s\ch{\bm u}\|^2_{L^2(\Om)^3}) \no\\
	    &\geq \frac{s_1}{|s|^2}C_1\|\ch{\bm E}\|_{H(\curl,\Om_h)}^2+s_1C_2\|\ch{\bm u}\|_{H^1(\Om)^3}^2 \no\\
	    &\geq C\|(\ch{\bm{E}},\ch{\bm{u}})\|_{\mathscr{X}_s}^2, \label{coercivity}
	\end{align}
	where $C$ is defined as
	\ben
		C:=\min\{\frac{s_1}{|s|^2}C_1,s_1C_2\},C_1=\min\{\mu_{max}^{-1},\vep_{min}|s|^2\},
		C_2=C_{\Om}\min\{C_{\Om},\rho_i|s|^2\}.
	\enn

    It follows from the Lax-Milgram lemma that the variational problem (\ref{3.4}) has a unique solution
     $(\ch{\bm{E}},\ch{\bm{u}})\in\mathscr{X}_s$ for each $s\in\Ct_+$. Moreover, using (\ref{3.4}),
     we clearly have
    \begin{equation}\label{3.11}
        \begin{aligned}
            a\left((\ch{\bm{E}},\ch{\bm{u}}),(\ch{\bm{E}},\ch{\bm{u}})\right)
            &\lesssim\frac{1}{|s|}\|\ch{\bm J}\|_{{L^2}(\Om_h)^3}\| s\ch{\bm{E}}\|_{L^2(\Om_h)^3}\\
            &\leq\frac{1}{2\eps|s|^2}\|\ch{\bm J}\|_{L^2(\Om_h)^3}^2+\frac{\eps}{2}\| s\ch{\bm E}\|
            _{L^2(\Om_h)^3}^2,
        \end{aligned}
    \end{equation}
    where we have used $\eps$-inequality in the last inequality.

	Choosing $\eps$ sufficiently small such that $\frac{\eps}{2}<\frac{s_1}{|s|^2}$, e.g., $\eps=\frac{s_1}{|s|^2}$,
	combining (\ref{coercivity}) with (\ref{3.11}), we obtain
	\be\label{3.12}
	     &\quad\frac{s_1}{|s|^2}\left(\|\na\ti\ch{\bm E}\|_{L^2(\Om_h)^3}^2
	     +\| s\ch{\bm E}\|_{L^2(\Om_h)^3}^2\right)\\
	     &\quad +s_1\min\{1,s_1^2\}\left(\|\na\ch{\bm{u}}\|_{F(\Om)}^2+\|\na\cdot\ch{\bm{u}}\|_{{L^2(\Om)}}^2
	     +\| s\ch{\bm{u}}\|_{{L^2(\Om)}^3}^2\right)\\
	     &\lesssim s_1^{-1}\|\ch{\bm J}\|_{L^2(\Om_h)^3}^2,
	\en
	we arrive at (\ref{3.6}) and (\ref{3.7}) after using Cauchy-Schwartz inequality for (\ref{3.12}).
\end{proof}

\subsection{Well-posedness in time domain}
For $0\leq t\leq T$, to show the well-posedness of the reduced problem (\ref{2.7}) and
 the convergence of the PML method, we make the following assumptions on the source term $\bm J$:
\be\label{assumption}
    \bm J\in H^5(0,T;L^2(\Om_h)^3),\;\;\pa_t^l\bm J|_{t=0}=\bm 0, l=0,1,2,3,4.
\en
Furthermore, in the rest of the paper, we will always assume that $\bm J$ can be extended to $\ify$
with respect to $t$ such that
\be\label{assumption1}
    \bm J\in H^5(0,\ify;L^2(\Om_h)^3),\;\;\| \bm J\|_{H^5(0,\ify;L^2(\Om_h)^3)}
    \lesssim \| \bm J\|_{H^5(0,T;L^2(\Om_h)^3)}.
\en

\begin{theorem}\label{thm3.4}
    The reduced initial-boundary value problem \eqref{2.7} has a unique solution
     $\left(\bm E(x,t),\bm H(x,t),\bm{u}(x,t)\right)$ satisfying
    \ben
        &\bm E(x,t)\in L^2\left(0,T;H(\curl,\Om_h)\right)\cap H^1\left(0,T;L^2(\Om_h)^3\right),\\
        &\bm H(x,t)\in L^2\left(0,T;H(\curl,\Om_h)\right)\cap H^1\left(0,T;L^2(\Om_h)^3\right),\\
        &\bm{u}(x,t)\in L^2\left(0,T;H^1(\Om)^3\right)\cap H^1\left(0,T;L^2(\Om)^3\right),
    \enn
    with the stability estimate
    \begin{align}\label{estimate}
        &\max\limits_{t\in[0,T]}\big(\|\pa_t\bm E\|_{L^2(\Om_h)^3}+\|\na\ti\bm E\|_{L^2(\Om_h)^3}\\ \no
        &\quad\quad+\|\pa_t\bm H\|_{L^2(\Om_h)^3}+\|\na\ti\bm H\|_{L^2(\Om_h)^3}\big)
        \lesssim{}\|\bm J\|_{H^1(0,T;L^2 (\Om_h)^3)},
     \end{align}
     \be\label{estimate1}
       &\max\limits_{t\in[0,T]}\big(\|\pa_t\bm{u}\|_{L^2(\Om)^3}+\|\na\cdot\bm{u}\|_{L^2(\Om)}
       +\|\na\bm{u}\|_{F(\Om)}\big)
        \lesssim{}\|\bm J\|_{L^1(0,T;L^2 (\Om_h)^3)}.
    \en
\end{theorem}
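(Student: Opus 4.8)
The plan is to derive Theorem~\ref{thm3.4} from the $s$-domain estimates of Lemma~\ref{thm3.1} by passing back to the time domain through the abstract inversion theorem for the Laplace transform, followed by an energy argument to sharpen the bounds. First I would observe that the solution operator constructed in Lemma~\ref{thm3.1} is, for each $s\in\Ct_+$, a bounded linear map, and the estimates \eqref{3.6}--\eqref{3.7} give explicit polynomial-in-$s_1^{-1}$ (equivalently, polynomial-in-$|s|$ after using $|s|\le\text{const}$ on vertical lines) control of $(\ch{\bm E},\ch{\bm u})$ in terms of $\ch{\bm J}$. Combined with the assumption \eqref{assumption1} that $\bm J\in H^5(0,\infty;L^2(\Om_h)^3)$ with vanishing traces up to order $4$ — which guarantees that $s^k\ch{\bm J}$ decays like a negative power of $|s|$ uniformly in $\Rt(s)>0$ — one checks that $(\ch{\bm E},\ch{\bm u})$ is holomorphic in $\Ct_+$ with the growth bound required by the Paley--Wiener / Plancherel--Polya type inversion theorem (the version stated, e.g., in the references the paper cites via \eqref{A.2} and Lemma~\ref{lemB.2}). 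This yields existence, uniqueness, and a first crude time-domain solution $(\bm E,\bm H,\bm u)$ in the stated function spaces, where $\bm H$ is recovered from $\bm H=-(\mu s)^{-1}\na\ti\bm E$ in the $s$-domain, i.e.\ $\mu\pa_t\bm H=-\na\ti\bm E$ in the time domain.

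Next I would upgrade the regularity-heavy but constant-wasteful Laplace-transform bound to the sharp stability estimates \eqref{estimate}--\eqref{estimate1} by a direct energy method on the time-domain system \eqref{2.7}. The idea is to test the Maxwell equations against $\pa_t\bm E$ and the elastodynamic equation against $\pa_t\bm u$, integrate over $\Om_h$ (resp.\ $\Om$), integrate by parts in space, and exploit the cancellation built into the factorized interface conditions $\n\ti\bm H=\bm{Tu}$, $\n\ti\bm E=\n\ti\bm u_t$ on $\Ga$: the boundary term $\int_\Ga(\na\ti\bm E)\ti\n\cdot\pa_t\bm E\,d\g$ coming from Maxwell matches, via these conditions, the traction term $\int_\Ga\bm{Tu}\cdot\pa_t\bm u\,d\g$ coming from elasticity, so the two energies couple into a single non-increasing quantity up to the source. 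The transparent-boundary terms on $\Ga_{h_j}$ are handled using the passivity/coercivity of $\mathscr{B}_j$ from Lemma~\ref{lem2.1} — one must phrase this as $\Rt\int_0^t\langle\mathscr{C}_j[\bm E_{\Ga_{h_j}}],\pa_t\bm E_{\Ga_{h_j}}\rangle\,d\tau\ge0$, which follows from $\Rt\langle\mathscr{B}_j\bm\om,\bm\om\rangle\ge0$ together with the structure $\mathscr{C}_j=\mathscr{L}^{-1}\circ s\mathscr{B}_j\circ\mathscr{L}$ and a standard causal-convolution positivity lemma (this is the same device used for TBCs in \cite{GL2017,wyz2019}). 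Assembling these gives $\tfrac{d}{dt}\mathcal{E}(t)\le \|\bm J\|_{L^2(\Om_h)}\,\|\pa_t\bm E\|_{L^2(\Om_h)}$ with $\mathcal{E}(t)\simeq\|\pa_t\bm E\|^2+\|\na\ti\bm E\|^2+\|\pa_t\bm H\|^2+\|\na\ti\bm H\|^2+\|\pa_t\bm u\|^2+\mathcal{E}(\bm u,\bm u)$, and Gronwall plus \eqref{estimate_vep}--\eqref{korn} yields \eqref{estimate}; to get the elastic bound \eqref{estimate1} in the weaker $L^1$-in-time norm, I would instead integrate the source term directly as $\int_0^t\|\bm J\|_{L^2}\,\|\pa_t\bm u\|_{L^2}\,d\tau\le(\max_\tau\|\pa_t\bm u\|_{L^2})\|\bm J\|_{L^1(0,T;L^2)}$ and absorb the max on the left.

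I expect the main obstacle to be making the energy identity rigorous rather than formal: the time-domain solution produced by Laplace inversion lives a priori only in $L^2(0,T;H(\curl))\cap H^1(0,T;L^2)$, so $\pa_t\bm E$ need not be an admissible test function and the boundary traces on $\Ga$ and $\Ga_{h_j}$ need careful justification. The standard remedy is to run the energy estimate first in the $s$-domain — multiply \eqref{3.1b} by $\ov{s\ch{\bm E}}$ and \eqref{3.1a} by $\ov{s\ch{\bm u}}$, take real parts, and use Lemma~\ref{lem2.1} — obtaining an $s$-domain inequality with the \emph{correct} powers of $s_1$ and $|s|$ (essentially a refinement of \eqref{3.12} that keeps $\|s\ch{\bm H}\|$ on the left via $\mu s\ch{\bm H}=-\na\ti\ch{\bm E}$), and then invoke the inversion theorem on this sharpened bound together with the Parseval/Plancherel identity $\int_0^\infty e^{-2s_1 t}\|\cdot\|^2\,dt = \tfrac{1}{2\pi}\int_{-\infty}^\infty\|\cdot(s_1+i s_2)\|^2\,ds_2$, letting $s_1\to0^+$ at the end. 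This route avoids touching the low-regularity time-domain object directly; the price is some bookkeeping to track that the source regularity \eqref{assumption}--\eqref{assumption1} (five time derivatives) is exactly what is consumed by the powers of $s$ appearing when one converts $s_1^{-1}$-type factors into time integrals and differentiations. A secondary technical point is the treatment of the interface $\Ga_f$ between the two electromagnetic layers: the jump conditions \eqref{jump} are the natural ones for the variational formulation in $H(\curl,\Om_h)$, so no extra boundary term arises there, but one should note that $\vep,\mu$ being only piecewise constant is harmless because all integrations by parts are performed on $\Om_h$ as a whole with $H(\curl)$ test functions.
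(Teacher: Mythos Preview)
Your two–step plan (Laplace inversion for existence and regularity, then a time–domain energy identity for the sharp stability bounds) is exactly the paper's architecture, and the Laplace part is fine as you describe it. The energy argument, however, has a genuine mismatch at the interface $\Ga$. If you test the second–order Maxwell equation against $\pa_t\bm E$ while testing the elastodynamic equation against $\pa_t\bm u$, the $\Ga$–boundary contribution from Maxwell is
\[
\int_\Ga \mu^{-1}(\na\ti\bm E)\ti\n\cdot\ov{\pa_t\bm E}\,d\g
=\int_\Ga (-\pa_t\bm H\ti\n)\cdot\ov{(\pa_t\bm E)_\Ga}\,d\g
=\int_\Ga \pa_t(\bm{Tu})\cdot\ov{(\pa_t^2\bm u)_\Ga}\,d\g,
\]
whereas the elastic side produces $\int_\Ga\bm{Tu}\cdot\ov{\pa_t\bm u}\,d\g$. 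The time–derivative counts do not match and these terms do \emph{not} cancel. The cancellation you need occurs only when the Maxwell block is tested against $\bm E$ (equivalently, work with the first–order system and the physical energy $e_1=\|\vep^{1/2}\bm E\|^2+\|\mu^{1/2}\bm H\|^2$): then the $\Ga$–term is $(\bm H\ti\n)\cdot\ov{\bm E}=\bm H\cdot(\n\ti\ov{\bm E})=\bm H\cdot(\n\ti\ov{\pa_t\bm u})$, which is exactly $-\bm{Tu}\cdot\ov{\pa_t\bm u}$. This is what the paper does, with the TBC positivity stated as $\Rt\int_0^t\langle\mathscr T_j[\bm E_{\Ga_{h_j}}],\bm E_{\Ga_{h_j}}\rangle\,d\tau\ge0$.

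The same mismatch explains why your route cannot deliver \eqref{estimate1} with the sharper right–hand side $\|\bm J\|_{L^1(0,T;L^2)}$: with test function $\pa_t\bm E$ the only forcing term is $\int\pa_t\bm J\cdot\ov{\pa_t\bm E}$, and there is no $\bm J\cdot\pa_t\bm u$ coupling to isolate. The paper first runs the basic energy $e_1+e_2$ (with $e_2=\rho_i\|\pa_t\bm u\|^2+\int_\Om\mathcal{E}(\bm u,\ov{\bm u})$) to obtain \eqref{estimate1} directly from $-2\Rt\int\bm J\cdot\ov{\bm E}$, and then \emph{differentiates the whole system in $t$} and repeats the identical argument (interface and TBC structure are preserved, source becomes $\pa_t\bm J$) to get \eqref{estimate}. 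Your $s$–domain fallback has the analogous defect: multiplying \eqref{3.1b} by $\ov{s\ch{\bm E}}$ gives a curl term with coefficient $\ov s/(s\mu)$, and $\Rt(\ov s/s)=(s_1^2-s_2^2)/|s|^2$ is sign–indefinite, so coercivity is lost; the paper's variational form \eqref{3.5} instead tests Maxwell against $\ov{\ch{\bm E}}$ and elastic against $\ov{s}\,\ov{\ch{\bm u}}$, which is precisely what makes \eqref{coercivity} go through.
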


\begin{proof}
    Simple calculations yields the following estimate
    \ben
        &\int_0^T(\|\na\ti\bm E\|_{L^2(\Om_h)^3}^2+\|\pa_t\bm E\|_{L^2(\Om_h)^3}^2
        +\|\na\bm{u}\|_{F(\Om)}^2+\|\pa_t\bm{u}\|_{L^2(\Om)^3}^2)dt\\
        \leq{}&\int_0^Te^{-2s_1(t-T)}(\|\na\ti\bm E\|_{L^2(\Om_h)^3}^2
        +\|\pa_t\bm E\|_{L^2(\Om_h)^3}^2+\|\na\bm{u}\|_{F(\Om)}^2
        +\|\pa_t\bm{u}\|_{L^2(\Om)^3}^2)dt\\
        \lesssim{}&\int_0^{\ify}e^{-2s_1t}(\|\na\ti\bm E\|_{L^2(\Om_h)^3}^2
        +\|\pa_t\bm E\|_{L^2(\Om_h)^3}^2+\|\na\bm{u}\|_{F(\Om)}^2
        +\|\pa_t\bm{u}\|_{L^2(\Om)^3}^2)dt.
    \enn
  It is therefore sufficient to estimate the integral
  \ben
	  \int_0^{\ify}e^{-2s_1t}(\|\na\ti\bm E\|_{L^2(\Om_h)^3}^2+\|\pa_t\bm E\|_{L^2(\Om_h)^3}^2
	  +\|\na\bm{u}\|_{F(\Om)}^2+\|\pa_t\bm{u}\|_{L^2(\Om)^3}^2)dt.
  \enn

    Recalling the $s$-domain reduced system  (\ref{2.8}), by estimates (\ref{3.6}) and (\ref{3.7})
    in Lemma \ref{thm3.1}, it follows from \cite[Lemma 44.1]{treves1975} that $(\ch{\bm{E}},\ch{\bm{u}})$
     are holomorphic functions of $s$ on the half plane $s_1>\g>0,$ where $\g$ is any positive constant.
     Hence we have from Lemma \ref{lemA.2} that the inverse Laplace transform of $\ch{\bm{E}}$ and
     $\ch{\bm{u}}$ exist and are supported in $[0,\ify]$.

    Denote by $\bm E=\mathscr{L}^{-1}(\ch{\bm E})$ and $\bm{u}=\mathscr{L}^{-1}(\ch{\bm{u}})$.
    It follows that using the Parseval identity (\ref{A.5}) and estimate (\ref{3.6})
	\ben
	     &\quad\int_0^{\ify}e^{-2s_1t}(\|\na\ti\bm E\|_{L^2(\Om_h)^3}^2+\|\pa_t\bm E\|_{L^2(\Om_h)^3}^2)dt\\
	     &=\frac{1}{2\pi}\int_{-\ify}^{\ify}(\|\na\ti\ch{\bm{E}}\|^2_{L^2(\Om_h)^3}
	     +\| s\ch{\bm{E}}\|^2_{L^2(\Om_h)^3})ds_2\\
	     &\lesssim\int_{-\ify}^{\ify}s^{-2}_1\|s\ch{\bm J}\|_{L^2(\Om_h)^3}^2ds_2
	     % ={}&s_1^{-2}\int_{-\ify}^{\ify}\| \mathscr{L}(\pa_t\bm J)\|_{L^2(\Om_h)^3}^2ds_2\\
	     \lesssim s_1^{-2}\int_0^{\ify}e^{-2s_1t}\|\pa_t\bm J\|_{L^2(\Om_h)^3}^2dt,
	\enn
    which shows that
    \ben
    	\bm E(x,t)\in L^2\left(0,T;H(\curl,\Om_h)\right)\cap H^1\left(0,T;L^2(\Om_h)^3\right),
    \enn
    thanks to the Maxwell system in (\ref{2.7}), we also have
    \ben
    	\bm H(x,t)\in L^2\left(0,T;H(\curl,\Om_h)\right)\cap H^1\left(0,T;L^2(\Om_h)^3\right).
    \enn
    For elastic wave, combining Parseval identity (\ref{A.5}) with estimate (\ref{3.7}), we similarly  have
	\ben
	    \int_0^{\ify}e^{-2s_1t}(\|\pa_t\bm{u}\|^2_{L^2(\Om)^3}+\|\na\bm{u}\|^2_{F(\Om)})dt
	    ={}&\frac{1}{2\pi}\int_{-\ify}^{\ify}(\| s\ch{\bm{u}}\|^2_{{L^2(\Om)}^3}+\|\na\ch{\bm{u}}\|^2_{F(\Om)})ds_2\\
	   \lesssim{}&\int_{-\ify}^{\ify}s^{-2}_1\max\{1,s_1^{-2}\}\|s\ch{\bm J}\|_{L^2(\Om_h)^3}^2ds_2\\
	    % ={}&s_1^{-2}\int_{-\ify}^{\ify}\| \mathscr{L}(\pa_t\bm J)\|_{L^2(\Om_h)^3}^2ds_2\\
	    \lesssim{}&s_1^{-2}\max\{1,s_1^{-2}\}\int_0^{\ify}e^{-2s_1t}\|\pa_t\bm J\|_{L^2(\Om_h)^3}^2dt,
	\enn
    which means that
    \ben
    	\bm{u}(x,t)\in L^2\left(0,T;H^1(\Om)^3\right)\cap H^1\left(0,T;L^2(\Om)^3\right).
    \enn

   In what follows, we shall prove the stability of solution in (\ref{2.7}) by means of the initial conditions.
   We start by defining an energy function
   \ben
   	\vep(t)=e_1(t)+e_2(t),\quad \for\;\;t\in(0,T)
   \enn
   with
    \ben
        &e_1(t)=\|\vep^{1/2}\bm E(\cdot,t)\|_{L^2(\Om_h)^3}^2+\|\mu^{1/2}\bm H(\cdot,t)\|_{L^2(\Om_h)^3}^2,\\
        &e_2(t)=\V\rho_{i}^{1/2}\pa_t\bm{u}\V_{L^2(\Om)^3}^2
        +\int_{\Om}\mathcal{E}(\bm u,\ov{\bm u})dx.
    \enn
    Observe that $\vep(\cdot)$ can be equivalently written as
    \be\label{3.14}
        \vep(t)-\vep(0)=\int_0^t\vep^{\prime}(\tau)d\tau=\int_0^t\Big(e_1^{\prime}(\tau)+e_2^{\prime}(\tau)\Big)d\tau.
    \en
    By simple calculations using the system (\ref{2.7}) and integration by parts, we have
    \be\label{3.15}
        \int_0^te_1^{\prime}(\tau)d\tau&=2\Rt\int_0^t\int_{\Om_h}\left(\vep\pa_\tau\bm E\cdot\ov{\bm E}
        +\mu\pa_\tau\bm H\cdot\ov{\bm H}\right)dxd\tau\\
%        &=2\Rt\int_0^t\int_{\Om_h}\left((\na\ti \bm{H})\cdot\ov{\bm E}-(\na\ti \bm{E})\cdot\ov{\bm H}\right)
%        dxd\tau-2\Rt\int_0^t\int_{\Om_h}\bm J\cdot\ov{\bm E}dxd\tau\\
        &=2\Rt\int_0^t\int_{\Om_h}\left((\na\ti \ov{\bm{E}})\cdot\bm H-(\na\ti \bm{E})\cdot\ov{\bm H}\right)dxd\tau\\
        &\quad-2\Rt\sum_{j=1}^2\int_0^t\int_{\Ga_{h_j}}\mathscr{T}_j[\bm E_{\Ga_{h_j}}]\cdot\ov{\bm E}_{\Ga_{h_j}}d\g d\tau\\
        &\quad-2\Rt\int_0^t\int_{\Om_h}\bm J\cdot\ov{\bm E}dxd\tau+2\Rt\int_0^t\int_{\Ga}(\bm{H}\ti\n)\cdot\ov{\bm E}d\g d\tau\\
        &=-2\Rt\sum_{j=1}^2\int_0^t\int_{\Ga_{h_j}}\mathscr{T}_j[\bm E_{\Ga_{h_j}}]\cdot\ov{\bm E}_{\Ga_{h_j}}d\g d\tau\\
        &\quad-2\Rt\int_0^t\int_{\Om_h}\bm J\cdot\ov{\bm E}dxd\tau+2\Rt\int_0^t\int_{\Ga}(\bm{H}\ti\n)\cdot\ov{\bm E}d\g d\tau.
    \en
    Noting the definition of $\mathcal{E}(\bm u,\bm v)$ (see \eqref{vep}), by the elastic wave equation \eqref{2.1} 
    and using the integration by parts, it can be similarly shown that 
    \begin{align}\label{3.16}
	    \int_0^te_2^{\prime}(\tau)d\tau
	    &= 2\Rt\int_0^t\int_{\Om}\Big(\rho_e\pa_\tau^2\bm{u}\cdot\pa_\tau\ov{\bm{u}}
	    + \mathcal{E}(\pa_{\tau}\bm u,\ov{\bm u})\Big) dx d\tau \no\\
	    % &= 2\Rt\int_0^t\int_{\Om_1}\Big((\De^{*}\bm{u}+{\bm j})\cdot\pa_\tau\ov{\bm{u}}
	    % + \mathcal{E}(\pa_{\tau}\bm u,\ov{\bm u})\Big) dx d\tau \no\\
	    &= \int_0^t\int_{\Om}2\Rt\Big(- \mathcal{E}(\bm u,\pa_{\tau}\ov{\bm u}) 
	    + \mathcal{E}(\pa_{\tau}\bm u,\ov{\bm u})\Big)dx d\tau \no \\
	    &\quad 
	    + 2\Rt\int_0^t\int_{\Ga}\bm{Tu}\cdot\pa_\tau\ov{\bm{u}}d\g d\tau
	    =2\Rt\int_0^t\int_{\Ga}\bm{Tu}\cdot\pa_\tau\ov{\bm{u}}d\g d\tau.
    \end{align}
    Combining (\ref{3.14})-(\ref{3.16}) with $\vep(0)=0$  and the interface condition (\ref{2.4}), we obtain
    \be\label{3.17}
        \vep(t)&=-2\Rt\sum_{j=1}^2\int_0^t\int_{\Ga_{h_j}}\mathscr{T}_j[\bm E_{\Ga_{h_j}}]\cdot\ov{\bm E}_{\Ga_{h_j}}d\g d\tau
        -2\Rt\int_0^t\int_{\Om_h}\bm J\cdot\ov{\bm E}dxd\tau\\
        &\quad+2\Rt\int_0^t\int_{\Ga}(\bm{H}\ti\n)\cdot\ov{\bm E}d\g d\tau
        +2\Rt\int_0^t\int_{\Ga}\bm{Tu}\cdot\pa_\tau\ov{\bm{u}}d\g d\tau\\
        &=-2\Rt\sum_{j=1}^2\int_0^t\int_{\Ga_{h_j}}\mathscr{T}_j[\bm E_{\Ga_{h_j}}]\cdot\ov{\bm E}_{\Ga_{h_j}}d\g d\tau
        -2\Rt\int_0^t\int_{\Om_h}\bm J\cdot\ov{\bm E}dxd\tau\\
        &\quad\;+2\Rt\int_0^t\int_{\Ga}\bm H\cdot(\n\ti\ov{\bm E}-\n\ti\pa_{\tau}\ov{\bm u})d\g d\tau\\
        &=-2\Rt\sum_{j=1}^2\int_0^t\int_{\Ga_{h_j}}\mathscr{T}_j[\bm E_{\Ga_{h_j}}]\cdot\ov{\bm E}_{\Ga_{h_j}}d\g d\tau
        -2\Rt\int_0^t\int_{\Om_h}\bm J\cdot\ov{\bm E}dxd\tau.
    \en
  By \cite[equation (4.11)]{GL2017}, it holds that
  \ben
  	\Rt\int_0^t\int_{\Ga_{h_j}}\mathscr{T}_j[\bm E_{\Ga_{h_j}}]\cdot\ov{\bm E}_{\Ga_{h_j}}d\g d\tau \ge 0.
  \enn
 This, combining the $\eps$-inequality and \eqref{estimate_vep} one has the following estimate
  \be\label{3.18}
	&\|\bm E(\cdot,t)\|_{L^2(\Om_h)^3}^2+\|\bm H(\cdot,t)\|_{L^2(\Om_h)^3}^2
	+\|\pa_t\bm{u}\|^2_{L^2(\Om)^3}+\|\bm\vep(\bm{u})\|^2_{F(\Om)}\\
	\lesssim{}&\vep(t)\leq{}-2\Rt\int_0^t\int_{\Om_h}\bm J\cdot\ov{\bm E}dxd\tau
	% \lesssim{}&\int_0^t\|\bm\rho\|_{H^{-1/2}(\dive,\Ga_{h_1})}\cdot\|\bm E_{\Ga_{h_1}}\|_{H^{-1/2}(\curl,\Ga_{h_1})}d\tau\no\\
	\lesssim{}\int_0^t\|\bm J\|_{L^2(\Om_h)^3}\cdot\|\bm E\|_{L^2(\Om_h)^3}d\tau\\
	\lesssim{}&\frac{\eps}{2}\max\limits_{t\in[0,T]}\|\bm E(\cdot,t)\|_{L^2(\Om_h)^3}^2+\frac{1}{2\eps}\|\bm J\|^2_{L^1(0,T;L^2(\Om_h)^3)}.
   \en
    Finally, letting $\eps>0$ in (\ref{3.18}) small enough, e.g. $\eps=1$ and applying Cauchy-Schwartz inequality yields
	\be\label{3.18'}
	   &\max\limits_{t\in[0,T]}\Big(\|\bm E(\cdot,t)\|_{L^2(\Om_h)^3}+\|\bm H(\cdot,t)\|_{L^2(\Om_h)^3}\\
	   &\qquad\quad+\|\pa_t\bm{u}\|_{L^2(\Om)^3}+\|\bm{\vep}(\bm{u})\|_{F(\Om)}\Big)\\
	   &\lesssim\|\bm J\|_{L^1(0,T;L^2(\Om_h)^3)}\lesssim\|\bm J\|_{L^2(0,T;L^2(\Om_h)^3)}.
	\en
	Now, by using the Cauchy-Schwartz inequality again, we have for any $0\le\xi \le T$
	\begin{align}\label{3.18+}
	\|\bm{u}(\cdot,\xi)\|_{L^2(\Om)^3}^2&=\int_0^\xi\pa_t\|\bm{u}(\cdot,t)\|_{L^2(\Om)^3}^2dt
	=2\Rt\int_0^\xi\int_\Om\pa_t\bm u(x,t)\cdot\bm u(x,t)dxdt\notag\\
	&\le 2\int_0^\xi\Big(\eps\|\pa_t\bm u(\cdot,t)\|^2_{L^2(\Om)^3}
	 +\frac{1}{4\eps}\|\bm u(\cdot,t)\|^2_{L^2(\Om)^3}\Big)dt\notag\\
	&\lesssim 2T\eps\|\pa_t\bm u(\cdot,\xi)\|^2_{L^2(\Om)^3}
	 +\frac{T}{2\eps}\|\bm u(\cdot,\xi)\|^2_{L^2(\Om)^3}.
	\end{align}
	Choosing $\eps=T$ in (\ref{3.18+}) gives
	\be\label{3.19}
		\|\bm{u}(\cdot,\xi)\|_{L^2(\Om)^3}^2\lesssim T^2\|\partial_t\bm u(\cdot,\xi)\|^2_{L^2(\Om)^3}.
	\en
	Applying Korn's inequality \eqref{korn} and using (\ref{3.19}) gives
	 \ben
		 \V\pa_t\bm{u}\V^2_{L^2(\Om)^3}+\V\bm\vep(\bm{u})\V^2_{F(\Om)}\gtrsim\|\bm u\|^2_{H^1(\Om)^3}
		 \gtrsim\|\na\cdot\bm u\|^2_{L^2(\Om)}+\|\na\bm u\|^2_{F(\Om)},
	 \enn
	This, combining (\ref{3.18'}) leads to the stability estimate (\ref{estimate1}).
	
	Taking the derivative of (\ref{2.7}) with respect to $t$, observing that $(\pa_t\bm E,\pa_t\bm H)$
	satisfy the same set of equations with the source $\bm J$ replaced by $\pa_t\bm J$, and
	 the initial conditions replaced by $\pa_t\bm E=\pa_t\bm H=\0$ using (\ref{2.2c})-(\ref{2.2d})
	 and $\pa_t\bm u$ also satisfies elastodynamic equation with $\pa_t^2\bm u(x,0)=\rho_i^{-1}
	 \De^*\bm u(x,0)=\0$, therefore we can follow the same steps as deriving (\ref{3.18'}) for
	 $(\pa_t\bm E,\pa_t\bm H)$  which leads to
	\be\label{3.18''}
	   &\max\limits_{t\in[0,T]}\Big(\V\pa_t\bm E(\cdot,t)\V_{L^2(\Om_h)^3}+\V\pa_t\bm H(\cdot,t)\V_{L^2(\Om_h)^3}\\
	   &{}+\V\pa_t^2\bm{u}\V_{L^2(\Om)^3}+\V\bm\vep(\pa_t\bm{u})\V_{F(\Om)}\Big)
	   \les\V\pa_t\bm J\V_{L^2(0,T;L^2(\Om_h)^3)}.
	\en
	This, combining (\ref{3.18'}) with the Maxwell's equations completes our proof of (\ref{estimate}).
\end{proof}

\section{The time domain PML problem}\label{sec-pml}
In this section, we shall derive the time domain PML formulation of the electromagnetic-elastic
interaction scattering problem. The well-posedness and stability of the PML problem is established 
based on the variational method and the energy method which is adopt in section \ref{sec-well_posed}.
In the end, we shall show the exponential convergence analysis of the time domain PML method applying
a novel technique to construct the PML layer.
\begin{figure}[!htbp]
\setcounter{subfigure}{0}
  \centering
  \includegraphics[width=4in]{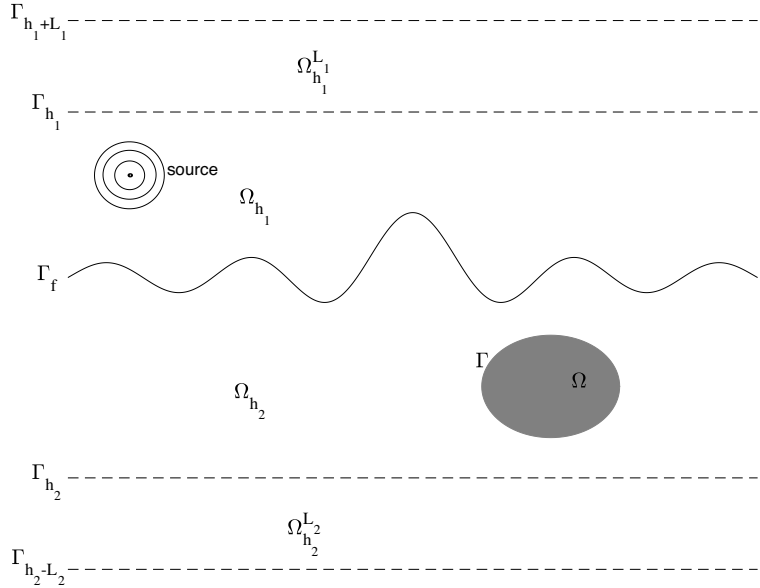}
 \caption{Geometric configuration of the truncated PML problem}\label{PML}
\end{figure}

\subsection{The PML equations and Well-posedness}
We firstly introduce geometry of the PML problem as shown in Figure \ref{PML}.
Let $\Om_{h_1}^{L_1}:=\{x\in\R^3: h_1<x_3<h_1+L_1\}$ and $\Om_{h_2}^{L_2}:=\{x\in\R^3: h_2-L_2<x_3<h_2\}$
denote the PML layers with thickness $L_1$ and $L_2$ which surround the strip domain $\Om_h$.
Denote by $\Om_{h+L}:=\{x\in\R^3: h_2-L_2<x_3<h_1+L_1\} \;\cap\; \Om^c$ the truncated PML
domain with boundaries $\Ga_{h_1+L_1}:=\{x\in\R^3: x_3=h_1+L_1\}$ and $\Ga_{h_2-L_2}:=\{x\in\R^3: x_3=h_2-L_2\}$.
Now, let $s_1> 0$ be an arbitrarily fixed parameter and let us introduce the PML medium property $\sig=\sig(x_3)$:
\be\label{4.1}
    \sig(x_3)=
            \begin{cases}
                1,&\quad \text{if}\ h_2\leq x_3\leq h_1,\\
                1+s_1^{-1}\sig_1(\frac{x_3-h_1}{L_1})^m,&\quad \text{if}\;h_1<x_3<h_1+L_1,\\
                1+s_1^{-1}\sig_2(\frac{h_2-x_3}{L_2})^m,&\quad \text{if}\;h_2-L_2<x_3<h_2,
            \end{cases}
\en
where $\sig_j$ are two positive constants and $m\geq 1$ denotes a given integer.
In what follows, we will take the real part of the Laplace transform variable $s\in\Ct_+$ to be
$s_1$, that is, $\Rt(s)=s_1$.

Next, we shall derive the PML equations by the change of variables technique, starting by introducing
 the real stretched coordinate $\hat{x}$
\ben
    \hat{x}_1=x_1, \hat{x}_2=x_2, \hat{x}_3=\int_{0}^{x_3}\sig(\tau)d\tau.
\enn
Since ${\rm supp}\;\bm J\subset\Om_h$,
taking the Laplace transform of the original Maxwell's equation (\ref{2.2}) with respect to $t$,
we have for $j=1,2$
\be\label{4.2}
    \begin{cases}
        \na\ti \ch{\bm{E}}+\mu_j s \ch{\bm{H}}=\0,&\gin\;\;\Om_{h_j}^{L
        _j}\\
        \na\ti \ch{\bm{H}}-\vep_j s \ch{\bm{E}}=\0,&\gin\;\;\Om_{h_j}^{L
        _j}
    \end{cases}
\en
Let $\ch{\bm E}(\hat{x})$ and $\ch{\bm H}(\hat{x})$ be the PML extensions of the electromagnetic field
 $\ch{\bm E}$ and $\ch{\bm H}$ satisfying (\ref{4.2}). To be more precise, the change of variables 
 technique is to require $\ch{\bm E}(\hat{x})$ and $\ch{\bm H}(\hat{x})$ satisfying
\begin{equation}\label{4.3}
    \begin{cases}
        \na_p\ti \ch{\bm{E}}(\hat{x})+\mu_j s \ch{\bm{H}}(\hat{x})=\0,&\gin\;\;\Om_{h_j}^{L
        _j}\\
        \na_p\ti \ch{\bm{H}}(\hat{x})-\vep_j s \ch{\bm{E}}(\hat{x})=\0,&\gin\;\;\Om_{h_j}^{L
        _j}
    \end{cases}
\end{equation}
where $\na_p\ti\bm u:=(\pa_{x_2}u_3-\sig^{-1}\pa_{x_3}u_2,\sig^{-1}\pa_{x_3}u_1-\pa_{x_1}u_3,
\pa_{x_1}u_2-\pa_{x_2}u_1)^{\top}$ for any vector $\bm u=(u_1,u_2,u_3)^{\top}$.
Observing that
\ben
	\na\ti \diag(1,1,\sig)\bm u=\diag(\sig,\sig,1)\na_p\ti\bm u,
\enn
we introduce the PML solutions $(\ch{\wit{\bm E}},\ch{\wit{\bm H}})$ by
\begin{align}\label{4.4}
\ch{\wit{\bm E}}(x)&=\diag(1,1,\sig)\ch{\bm E}(\hat{x}),\\
\label{4.5}
\ch{\wit{\bm H}}(x)&=\diag(1,1,\sig)\ch{\bm H}(\hat{x}).
\end{align}
Inserting (\ref{4.4}) and (\ref{4.5}) into (\ref{4.3}) and combining the elastic wave equations, 
we obtain the truncated  PML equations of $\ch{\wit{\bm E}}$, $\ch{\wit{\bm H}}$ and $\ch{\wit{\bm u}}$
\begin{equation}\label{4.6}
    \begin{cases}
    \na\ti\ch{\wit{\bm E}}+\wit{\mu}s\ch{\wit{\bm H}}=\0,&\gin\;\;\Om_{h+L}\\
    \na\ti\ch{\wit{\bm H}}-\wit{\vep}s\ch{\wit{\bm E}}=\ch{\bm J},&\gin\;\;\Om_{h+L}\\
    \De^*\ch{\wit{\bm u}}-\rho_is^2\ch{\wit{\bm u}}=\0,& \gin\;\; \Om\\
    \n\ti[\ch{\wit{\bm E}}]=\n\ti[\mu^{-1}\na\ti\ch{\wit{\bm E}}]=\0,&\on\;\;\Ga_f\\
    % \n\ti[\ch{\wit{\bm H}}]=\n\ti[\vep^{-1}\na\ti\ch{\wit{\bm H}}]=\0, &\on\;\;\Ga_f\\
    \n\ti\ch{\wit{\bm H}}=\bm{T\ch{\wit{\bm u}}},\;\n\ti\ch{\wit{\bm E}}=\n\ti s\ch{\wit{\bm u}}&\on\;\; \Ga\\
    % \n\ti\ch{\wit{\bm E}}=\n\ti s\ch{\wit{\bm u}},&\on\;\; \Ga\\
    \ch{\wit{\bm E}}\ti\n_j=\0,&\on\;\;\Ga_{h_j\pm L_j},\quad j=1,2,
    % \ch{\wit{\bm E}}\ti\n_2=\0,&\on\;\;\Ga_{h_2-L_2}
    \end{cases}
\end{equation}
where $\wit{\mu}:=\diag(\sig,\sig,\sig^{-1})\mu$ and $\wit{\vep}:=\diag(\sig,\sig,\sig^{-1})\vep$,
respectively, and the perfect electric conductor (PEC) boundary conditions have been
imposed on the PML boundary $\Ga_{h_1+L_1}$ and $\Ga_{h_2-L_2}$ (Hereafter, we always take the 
sign $+$ when $j=1$, and $-$ when $j=2$ in $\Ga_{h_j\pm L_j}$).

Eliminating the magnetic field $\ch{\wit{\bm H}}$ from (\ref{4.6}) yields the equation 
of $(\ch{\wit{\bm E}},\ch{\wit{\bm u}})$
\begin{equation}\label{4.7}
    \begin{cases}
     \na\ti((s\wit{\mu})^{-1}\na\ti\ch{\wit{\bm E}})+s\wit{\vep}\ch{\wit{\bm E}}=-\ch{\bm J},&\gin\;\;\Om_{h+L}\\
     \De^*\ch{\wit{\bm u}}-\rho_is^2\ch{\wit{\bm u}}=\0,& \gin\;\; \Om\\
      \n\ti[\ch{\wit{\bm E}}]=\n\ti[\mu^{-1}\na\ti\ch{\wit{\bm E}}]=\0,&\on\;\;\Ga_f\\
    -\n\ti(s\wit{\mu})^{-1}\na\ti\ch{\wit{\bm E}}=\bm{T\ch{\wit{\bm u}}},&\on\;\; \Ga\\
    \n\ti\ch{\wit{\bm E}}=\n\ti s\ch{\wit{\bm u}},&\on\;\; \Ga\\
    \ch{\wit{\bm E}}\ti\n_j=\0,&\on\;\;\Ga_{h_j\pm L_j},\quad j=1,2.
    % \ch{\wit{\bm E}}\ti\n_2=\0,&\on\;\;\Ga_{h_2-L_2}
    \end{cases}
\end{equation}
In the following, we shall show the well-posedness of (\ref{4.7}) by the variational method in the Hilbert space
\ben
	\wit{\mathscr{X}}_s:=\left\{(\bm V,\bm v)\in H_0(\curl,\Om_{h+L})\ti H^1(\Om)^3,
	\;\n\ti\bm V=\n\ti s\bm v,\;\on\;\;\Ga\right\}
\enn
  where $H_0(\curl,\Om_{h+L}):=\{\bm u\in H(\curl,\Om_{h+L}):\bm u\ti\n_1=\0,\;\on
 \;\Ga_{h_1+L_1}\;\gand\;\bm u\ti\n_2=\0,\;\on\;\Ga_{h_2-L_2}\}$.
 And the norm on $\wit{\mathscr{X}}_s$ is defined as (\ref{3.1+}) with $\Om_h$ replaced by $\Om_{h+L}$.
 To this end, we introduce the variational formulation of (\ref{4.7}):
 to find a solution $(\ch{\wit{\bm E}},\ch{\wit{\bm u}})\in \wit{\mathscr{X}}_s$ such that
\be\label{4.8}
    \wit{a}\big((\ch{\wit{\bm E}},\ch{\wit{\bm u}}),(\bm{V},\bm v)\big)
    =-\int_{\Om_{h}}\ch{\bm J}\cdot\ov{\bm V}dx
    \quad \text{for all}\;\;(\bm V,\bm v)\in \wit{\mathscr{X}}_s,
\en
where the sesquilinear form $\wit{a}(\cdot,\cdot)$ is defined as
\ben
    \wit{a}\big((\ch{\wit{\bm E}},\ch{\wit{\bm u}}),(\bm{V},\bm v)\big)&=\int_{\Om_{h+L}}
    \Big((s\wit{\mu})^{-1}(\na\ti\ch{\wit{\bm{E}}})\cdot(\na\ti\ov{\bm V})dx
    +s\wit{\vep}\ch{\wit{\bm{E}}}\cdot\ov{\bm V}\Big)dx\\
    &+\int_\Om\left[\ov{s}\mathcal{E}(\ch{\wit{\bm{u}}},\ov{\bm v})
    +\rho_is|s|^2\ch{\wit{\bm{u}}}\cdot\ov{\bm v}\right]dx.
\enn
Noting that $1\leq\sig\leq1+s_1^{-1}\sig_0$, for $x\in\Om_{h+L}$,
combining the boundness of $\vep$, $\mu$, Korn's inequality \eqref{korn} and 
\eqref{estimate_vep}, we have
\ben
    \Rt\;\wit{a}\big((\ch{\wit{\bm E}},\ch{\wit{\bm u}}),(\ch{\wit{\bm E}},\ch{\wit{\bm u}})\big)
    \gtrsim{}&\frac{1}{1+s_1^{-1}\sig_0}\frac{s_1}{|s|^2}\left(\|\na\ti\ch{\wit{\bm E}}\|_{L^2(\Om_{h+L})^3}^2
    +\| s\ch{\wit{\bm E}}\|_{L^2(\Om_{h+L})^3}^2\right)\\
     &+s_1\min\{1,s_1^2\}\left(\|\na\ch{\wit{\bm{u}}}\|_{F(\Om)}^2+\|\na\cdot\ch{\wit{\bm{u}}}\|_{{L^2(\Om)}}^2
     +\| s\ch{\wit{\bm{u}}}\|_{{L^2(\Om)}^3}^2\right).
\enn
where $\sig_0:=\max\{\sig_1,\sig_2\}$, which implies the uniform coercivity of $\wit{a}(\cdot,\cdot)$.

Arguing similarly as in the proof of Lemma \ref{thm3.1} (noting that the TBC in the s-domain is now replaced 
with the PEC boundary condition), we can obtain the following lemma.
\begin{lem}\label{thm4.1}
  The truncated PML variational problem \eqref{4.8} has a unique solution $(\ch{\wit{\bm E}},\ch{\wit{\bm u}})
  \in\wit{\mathscr{X}}_s$ for each $s\in\Ct_+$ with $\Rt(s)=s_1>0$. Further, it holds that
\begin{align}\label{4.9}
    &\|\na\ti\ch{\wit{\bm E}}\|_{L^2(\Om_{h+L})^3}
    +\| s\ch{\wit{\bm E}}\|_{L^2(\Om_{h+L})^3}\lesssim s_1^{-1}(1+s_1^{-1}\sig_0)\| s\ch{\bm J}\|_{L^2(\Om_h)^3},
    \\ \label{4.9a}
    &\|\na\ch{\wit{\bm{u}}}\|_{F(\Om)}+\| \na\cdot\ch{\wit{\bm{u}}}\|_{L^2(\Om)}+\| s\ch{\wit{\bm{u}}}\|_
    {{L^2(\Om)}^3}\lesssim \frac{\sqrt{1+s_1^{-1}\sig_0}}{s_1\min\{1,s_1\}}\|\ch{\bm J}\|_{L^2(\Om_h)^3}.
\end{align}
\end{lem}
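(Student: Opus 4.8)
The plan is to mimic the proof of Lemma \ref{thm3.1} almost verbatim, replacing the artificial planes $\Ga_{h_j}$ and the EtM boundary operators $\mathscr{B}_j$ by the PEC boundary $\Ga_{h_j\pm L_j}$ and the homogeneous condition $\ch{\wit{\bm E}}\ti\n_j=\0$. Concretely, I would first verify that $\wit a(\cdot,\cdot)$ is bounded on $\wit{\mathscr{X}}_s\ti\wit{\mathscr{X}}_s$: since $1\le\sig\le 1+s_1^{-1}\sig_0$, the weighted coefficients $(s\wit\mu)^{-1}$ and $s\wit\vep$ are uniformly comparable to $(s\mu)^{-1}$ and $s\vep$, so the Cauchy--Schwarz estimates in part (i) of the proof of Lemma \ref{thm3.1} carry over (there is now no boundary term at all, only the volume integrals over $\Om_{h+L}$ and $\Om$, which only makes things easier). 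Combined with the coercivity estimate for $\Rt\,\wit a$ already displayed just before the statement, the Lax--Milgram lemma yields existence and uniqueness of $(\ch{\wit{\bm E}},\ch{\wit{\bm u}})\in\wit{\mathscr{X}}_s$ for each $s\in\Ct_+$ with $\Rt(s)=s_1$.

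For the stability estimates \eqref{4.9} and \eqref{4.9a}, I would test the variational equation \eqref{4.8} against the solution itself, bound the right-hand side by $|s|^{-1}\|\ch{\bm J}\|_{L^2(\Om_h)^3}\,\|s\ch{\wit{\bm E}}\|_{L^2(\Om_{h+L})^3}$ (using $\|\ov{\bm V}\|=\|\bm V\|$ and $\mathrm{supp}\,\ch{\bm J}\subset\Om_h\subset\Om_{h+L}$), apply Young's $\eps$-inequality with $\eps$ chosen proportional to $\tfrac{1}{1+s_1^{-1}\sig_0}\tfrac{s_1}{|s|^2}$ so that the $\tfrac{\eps}{2}\|s\ch{\wit{\bm E}}\|^2$ term is absorbed into the coercivity lower bound, exactly as in \eqref{3.11}--\eqref{3.12}. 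This produces
\be\label{4.9pf}
\begin{aligned}
&\tfrac{1}{1+s_1^{-1}\sig_0}\tfrac{s_1}{|s|^2}\left(\|\na\ti\ch{\wit{\bm E}}\|_{L^2(\Om_{h+L})^3}^2+\| s\ch{\wit{\bm E}}\|_{L^2(\Om_{h+L})^3}^2\right)\\
&\quad+s_1\min\{1,s_1^2\}\left(\|\na\ch{\wit{\bm u}}\|_{F(\Om)}^2+\|\na\cdot\ch{\wit{\bm u}}\|_{L^2(\Om)}^2+\|s\ch{\wit{\bm u}}\|_{L^2(\Om)^3}^2\right)\\
&\lesssim (1+s_1^{-1}\sig_0)\,s_1^{-1}\,\| s\ch{\bm J}\|_{L^2(\Om_h)^3}^2,
\end{aligned}
\en
from which \eqref{4.9} and \eqref{4.9a} follow by taking square roots and using $\|\ch{\bm J}\|\le|s|^{-1}\|s\ch{\bm J}\|$ where appropriate; the precise bookkeeping of the factor $\sqrt{1+s_1^{-1}\sig_0}$ and the $\min\{1,s_1\}$ is the only slightly delicate arithmetic, and it is identical in structure to the passage from \eqref{3.12} to \eqref{3.6}--\eqref{3.7}.

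The main obstacle — or rather the one point requiring care rather than being purely mechanical — is tracking how the PML weight $\sig$ enters the constants. Unlike in Lemma \ref{thm3.1}, the coercivity constant now degrades by the factor $(1+s_1^{-1}\sig_0)^{-1}$ because of the $\diag(\sig,\sig,\sig^{-1})$ weights, and one must make sure this factor appears on the correct side of every inequality so that the final estimates have the stated $(1+s_1^{-1}\sig_0)$ dependence; everything else (boundedness, the Lax--Milgram invocation, Korn's inequality \eqref{korn}, the coercivity bound \eqref{estimate_vep}) transfers without change. I would therefore simply state that the argument repeats that of Lemma \ref{thm3.1} mutatis mutandis and only exhibit the modified coercivity/absorption step \eqref{4.9pf}, which is what the phrase ``arguing similarly as in the proof of Lemma \ref{thm3.1}'' in the excerpt already signals.
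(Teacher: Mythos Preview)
Your proposal is correct and follows exactly the approach the paper itself indicates: the paper gives no separate proof of Lemma~\ref{thm4.1} but simply states ``arguing similarly as in the proof of Lemma~\ref{thm3.1} (noting that the TBC in the $s$-domain is now replaced with the PEC boundary condition)'', and records the modified coercivity bound with the extra factor $(1+s_1^{-1}\sig_0)^{-1}$ just before the lemma --- precisely the two ingredients you identify. One minor slip: in your displayed inequality \eqref{4.9pf} the right-hand side should carry $\|\ch{\bm J}\|_{L^2(\Om_h)^3}^2$ rather than $\|s\ch{\bm J}\|_{L^2(\Om_h)^3}^2$ (the factor $|s|$ enters only when you divide through by $s_1/|s|^2$ to isolate the electromagnetic terms, exactly as in passing from \eqref{3.12} to \eqref{3.6}); with that correction the arithmetic yields \eqref{4.9}--\eqref{4.9a} as claimed.
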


Taking the inverse Laplace transform of system (\ref{4.6}),
we obtain the truncated PML problem in the time domain
\begin{equation}\label{4.10}
    \begin{cases}
    \na\ti\wit{\bm E}+\wit{\mu}\pa_t\wit{\bm H}=\0,&\gin\;\;\Om_{h+L}\ti(0,T)\\
    \na\ti\wit{\bm H}-\wit{\vep}\pa_t\wit{\bm E}=\bm J,&\gin\;\;\Om_{h+L}\ti(0,T)\\
     \rho_i\frac{\pa^2\wit{\bm u}}{\pa t^2}-\De^*\wit{\bm u}=\0,& \gin\;\; \Om\ti(0,T)\\
    \wit{\bm E}|_{t=0}=\wit{\bm H}|_{t=0}=\0,&\gin\;\;\Om_{h+L}\\
     \wit{ \bm u}(x,0)=\pa_t\wit{\bm u}(x,0)=\0,& \gin\;\;  \Om\\
     \n\ti[\wit{\bm E}]=\n\ti[\mu^{-1}\na\ti\wit{\bm E}]=\0,&\on\;\;\Ga_f \ti(0,T)\\
    % \n\ti[\wit{\bm H}]=\n\ti[\vep^{-1}\na\ti\wit{\bm H}]=\0, &\on\;\;\Ga_f \ti(0,T)\\
    \wit{\bm E}\ti\n_j=\0,&\on\;\;\Ga_{h_j\pm L_j}\ti(0,T),\quad j=1,2.\\
%    \wit{\bm E}\ti\n_2=\0.&\on\;\;\Ga_{h_2-L_2}\ti(0,T)
    \end{cases}
\end{equation}
Note that $s_1$ appearing in the matrix $\wit{\mu}$ and $\wit{\vep}$ is an arbitrarily fixed, 
positive parameter, as mentioned earlier at the beginning of this subsection. In the Laplace transform domain,
the transform variable $s\in\Ct_+$ is taken so that $\Rt(s)=s_1>0$, and in the subsequent study of
the well-posedness and convergence of the truncated PML problem (\ref{4.10}), we take $s_1=1/T$.

The well-posedness and stability of the truncated PML problem in the time domain (\ref{4.10})
can be obtained similarly as Theorem \ref{thm3.4} 
with using the estimate (\ref{4.9})-(\ref{4.9a}) in Lemma \ref{thm4.1} as well as the energy method.

\begin{theorem}
Let $s_1=1/T$. The truncated initial-boundary value problem \eqref{4.10} has a unique solution 
$\big(\wit{\bm E}(x,t),\wit{\bm H}(x,t),\wit{\bm u}(x,t)\big)$ satisfying
\ben
    &\wit{\bm E}(x,t)\in L^2\big(0,T;H_0(\curl,\Om_{h+L})\big)\cap H^1\left(0,T;L^2(\Om_{h+L})^3\right),\\
    &\wit{\bm H}(x,t)\in L^2\big(0,T;H_0(\curl,\Om_{h+L})\big)\cap H^1\left(0,T;L^2(\Om_{h+L})^3\right),\\
    &\wit{\bm u}(x,t)\in L^2\big(0,T;H^1(\Om)^3\big)\cap H^1\left(0,T;L^2(\Om)^3\right),
\enn
with the stability estimate
\ben
    &\max\limits_{t\in[0,T]}\left(\|\pa_t \wit{\bm E}\|_{L^2(\Om_{h+L})^3}+\|\na\ti \wit{\bm E}\|_{L^2(\Om_{h+L})^3}\right.\\
    &\left.+\|\pa_t \wit{\bm H}\|_{L^2(\Om_{h+L})^3}+\|\na\ti \wit{\bm H}\|_{L^2(\Om_{h+L})^3}\right)\\
    &\lesssim{}(1+\sig_0T)^2\|\bm J\|_{H^1(0,T;L^2(\Om_h)^3)},
\enn
and
\ben
    \max\limits_{t\in[0,T]}(\|\pa_t \wit{\bm u}\|_{L^2(\Om)^3}+\|\na\cdot \wit{\bm u}\|_{L^2(\Om)}
    +\|\na\wit{\bm u}\|_{F(\Om)})
    \lesssim{}\sqrt{1+\sig_0T}\|\bm J\|_{L^1(0,T;L^2(\Om_h)^3)}.
\enn
\end{theorem}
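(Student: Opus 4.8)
The plan is to mirror the proof of Theorem~\ref{thm3.4} essentially line by line, the only structural changes being that the transparent-boundary operators $\mathscr{T}_j$ are replaced by the homogeneous PEC conditions $\wit{\bm E}\ti\n_j=\0$ on $\Ga_{h_j\pm L_j}$, and that the scalar coefficients $\vep,\mu$ are replaced by the symmetric positive-definite, $x_3$-dependent PML matrices $\wit{\vep}=\diag(\sig,\sig,\sig^{-1})\vep$ and $\wit{\mu}=\diag(\sig,\sig,\sig^{-1})\mu$. Since $s_1=\Rt(s)$ is here an arbitrarily fixed parameter that we will set to $1/T$, the bound $1\le\sig\le 1+s_1^{-1}\sig_0=1+\sig_0 T$ makes the eigenvalues of $\wit{\vep}$ and $\wit{\mu}$ lie in fixed intervals comparable to $[\,\vep_{min}(1+\sig_0T)^{-1},\ \mu_{max}(1+\sig_0T)\,]$; consequently Lemma~\ref{thm4.1} provides the $s$-domain solution $(\ch{\wit{\bm E}},\ch{\wit{\bm u}})$ satisfying \eqref{4.9}--\eqref{4.9a}, in which the factors $1+s_1^{-1}\sig_0$ are now constants, so these estimates are of exactly the same form as \eqref{3.6}--\eqref{3.7}.

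\textbf{Step 1 (regularity).} By \cite[Lemma~44.1]{treves1975}, $(\ch{\wit{\bm E}},\ch{\wit{\bm u}})$ is holomorphic in $s$ on every half-plane $\Rt(s)>\g>0$; hence by Lemma~\ref{lemA.2} the inverse Laplace transforms $\wit{\bm E}=\mathscr{L}^{-1}(\ch{\wit{\bm E}})$, $\wit{\bm u}=\mathscr{L}^{-1}(\ch{\wit{\bm u}})$ exist and are supported in $[0,\ify)$. Applying the Parseval identity \eqref{A.5} to \eqref{4.9}--\eqref{4.9a}, using $\int_0^T(\cdot)\,dt\le e^{2s_1T}\int_0^\ify e^{-2s_1t}(\cdot)\,dt$ and the extension assumption \eqref{assumption1} on $\bm J$, exactly as in the proof of Theorem~\ref{thm3.4}, yields $\wit{\bm E}\in L^2(0,T;H_0(\curl,\Om_{h+L}))\cap H^1(0,T;L^2(\Om_{h+L})^3)$ and $\wit{\bm u}\in L^2(0,T;H^1(\Om)^3)\cap H^1(0,T;L^2(\Om)^3)$; the membership of $\wit{\bm H}$ then follows from the first equation in \eqref{4.10}.

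\textbf{Step 2 (energy identity and stability).} Following \eqref{3.14}--\eqref{3.17}, introduce the energy $\vep(t)=e_1(t)+e_2(t)$ with $e_1(t)=\|\wit{\vep}^{1/2}\wit{\bm E}(\cdot,t)\|_{L^2(\Om_{h+L})^3}^2+\|\wit{\mu}^{1/2}\wit{\bm H}(\cdot,t)\|_{L^2(\Om_{h+L})^3}^2$ and $e_2(t)=\|\rho_i^{1/2}\pa_t\wit{\bm u}\|_{L^2(\Om)^3}^2+\int_\Om\mathcal{E}(\wit{\bm u},\ov{\wit{\bm u}})\,dx$. Differentiating and integrating by parts with \eqref{4.10} reproduces \eqref{3.15}--\eqref{3.16}: the curl terms cancel in pairs, the boundary integrals over $\Ga_{h_j\pm L_j}$ vanish identically because of the PEC condition (this is the one place where the PML argument is \emph{simpler} than Theorem~\ref{thm3.4}, where only $\Rt\langle\mathscr{T}_j[\cdot],\cdot\rangle\ge0$ was available), and the interface integrals over $\Ga$ cancel through $\n\ti\wit{\bm H}=\bm{T\wit{\bm u}}$, $\n\ti\wit{\bm E}=\n\ti\pa_t\wit{\bm u}$. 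Hence $\vep(t)=-2\Rt\int_0^t\!\int_{\Om_{h+L}}\bm J\cdot\ov{\wit{\bm E}}\,dx\,d\tau$; an $\eps$-inequality together with \eqref{estimate_vep}, Korn's inequality \eqref{korn}, and the bound $\|\wit{\bm u}(\cdot,\xi)\|_{L^2}^2\lesssim T^2\|\pa_t\wit{\bm u}(\cdot,\xi)\|_{L^2}^2$ derived as in \eqref{3.18+}--\eqref{3.19}, give the elastic estimate, with the factor $\sqrt{1+\sig_0T}$ entering through the lower bound $e_1\ge \vep_{min}(1+\sig_0T)^{-1}\|\wit{\bm E}\|_{L^2}^2$. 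For the $H^1(0,T;L^2)$ bounds on $(\wit{\bm E},\wit{\bm H})$, differentiate \eqref{4.10} in $t$: $(\pa_t\wit{\bm E},\pa_t\wit{\bm H})$ solves the same PML system with source $\pa_t\bm J$ and zero initial data (since $\bm J|_{t=0}=\0$ and $\wit{\bm E}|_{t=0}=\wit{\bm H}|_{t=0}=\0$ force $\pa_t\wit{\bm E}|_{t=0}=\wit{\vep}^{-1}(\na\ti\wit{\bm H}-\bm J)|_{t=0}=\0$ and likewise $\pa_t\wit{\bm H}|_{t=0}=\0$), while $\pa_t\wit{\bm u}$ solves the elastodynamic equation with $\pa_t^2\wit{\bm u}|_{t=0}=\rho_i^{-1}\De^*\wit{\bm u}|_{t=0}=\0$; rerunning the energy estimate, then recovering $\|\na\ti\wit{\bm E}\|_{L^2}$ and $\|\na\ti\wit{\bm H}\|_{L^2}$ from $\na\ti\wit{\bm E}=-\wit{\mu}\pa_t\wit{\bm H}$ and $\na\ti\wit{\bm H}=\wit{\vep}\pa_t\wit{\bm E}+\bm J$, produces the claimed bound with factor $(1+\sig_0T)^2$, and combining with the bounds for $(\wit{\bm E},\wit{\bm H})$ and the Maxwell equations finishes the estimate.

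\textbf{Main obstacle.} Everything above is routine once one commits to imitating Theorem~\ref{thm3.4}; the genuinely delicate point is the bookkeeping of the PML-parameter dependence. One must check that all integrations by parts still close with the anisotropic, $x_3$-dependent coefficients $\wit{\vep},\wit{\mu}$ — which they do, since these matrices are symmetric and $t$-independent — and, more importantly, that the powers of $1+\sig_0T$ accumulated through the coercivity constant in Lemma~\ref{thm4.1}, through the equivalence $e_1\simeq\|\wit{\bm E}\|_{L^2}^2+\|\wit{\bm H}\|_{L^2}^2$, and through the multiplication by $\wit{\mu}$ or $\wit{\vep}$ when recovering the curls, add up to \emph{exactly} $(1+\sig_0T)^2$ for the electromagnetic field and $\sqrt{1+\sig_0T}$ for the elastic displacement, and no more. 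This sharp tracking is precisely where the normalization $s_1=1/T$ is used, turning the generic $s$-domain factor $1+s_1^{-1}\sig_0$ into $1+\sig_0T$.
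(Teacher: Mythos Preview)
Your proposal is correct and follows essentially the same approach as the paper, which simply states that the result ``can be obtained similarly as Theorem~\ref{thm3.4} with using the estimate \eqref{4.9}--\eqref{4.9a} in Lemma~\ref{thm4.1} as well as the energy method.'' Your outline in fact supplies more detail than the paper does, correctly identifying that the PEC condition makes the boundary terms on $\Ga_{h_j\pm L_j}$ vanish outright (simplifying the analogue of \eqref{3.17}) and accurately tracking how the powers of $1+\sig_0T$ accumulate: one factor from the equivalence $e_1\simeq\|\wit{\bm E}\|^2+\|\wit{\bm H}\|^2$ in the $\eps$-argument, and a second from multiplying by $\wit{\mu}$ or $\wit{\vep}$ when recovering the curls via the Maxwell system.
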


\subsection{EtM operators for the PML problem}
Recalling the truncated PML problem (\ref{4.6}) in $s$-domain, let $\ch{\wit{\bm
 E}}=(\ch{\wit{E}}_1,\ch{\wit{E}}_2,\ch{\wit{E}}_3)^{\top}$ and $\ch{\wit{\bm
 H}}=(\ch{\wit{H}}_1,\ch{\wit{H}}_2,\ch{\wit{H}}_3)^{\top}$, denote by $\ch{\wit{\bm E}}_{\Ga_{h_j}}
 =-\n_j\ti(\n_j\ti\ch{\wit{\bm E}}|_{\Ga_{h_j}})=(\ch{\wit{E}}_1(\wit{x},h_j),\ch{\wit{E}}_2(\wit{x},h_j),0)^{\top}$
 and $\ch{\wit{\bm H}}\ti\n_j=(\ch{\wit{H}}_2(\wit{x},h_j),-\ch{\wit{H}}_1(\wit{x},h_j),0)^{\top}$
 the tangential component of the electric field and the tangential trace of the magnetic field on
 $\Ga_{h_j}$, respectively. We start by introducing the  EtM operators for the PML problem (\ref{4.6})
\ben
	\wit{\mathscr{B}}_{j}:H^{-1/2}(\curl,\Ga_{h_j})&\ra H^{-1/2}(\dive,\Ga_{h_j})\\
	\ch{\wit{\bm E}}_{\Ga_{h_j}}&\ra\ch{\wit{\bm H}}\ti\n_j
\enn
where $\ch{\wit{\bm E}}$ and $\ch{\wit{\bm H}}$ satisfy the following equations in the PML layer $\Om_{h_j}^{L_j}$
\begin{equation}\label{4.11}
    \begin{cases}
    \na\ti\ch{\wit{\bm E}}+\wit{\mu}_js\ch{\wit{\bm H}}=\0,&\gin\;\;\Om_{h_j}^{L_j}\\
    \na\ti\ch{\wit{\bm H}}-\wit{\vep}_js\ch{\wit{\bm E}}=\0,&\gin\;\;\Om_{h_j}^{L_j}\\
    \ch{\wit{\bm E}}(\wit{x},x_3)=\ch{\wit{\bm E}}(\wit{x},h_j),&\on\;\;\Ga_{h_j}\\
    \ch{\wit{\bm E}}\ti\n_j=\0,&\on\;\;\Ga_{h_j\pm L_j}.
    % \ch{\wit{\bm E}}\ti\n_2=\0.&\on\;\;\Ga_{h_2-L_2}
    \end{cases}
\end{equation}
Using the Maxwell's equations in (\ref{4.11}), we easily have
\begin{align}\label{4.11'}
  \ch{\wit{H}}_2(\wit{x},h_j)&=(\mu_js)^{-1}(\pa_{x_1}\ch{\wit{E}}_3-\pa_{x_3}\ch{\wit{E}}_1),\\ \label{4.11''}
  -\ch{\wit{H}}_1(\wit{x},h_j)&=(\mu_js)^{-1}(\pa_{x_2}\ch{\wit{E}}_3-\pa_{x_3}\ch{\wit{E}}_2).
\end{align}
Eliminating magnetic field $\wit{\bm H}$ from (\ref{4.11}) and writing it into component form, we obtain
\begin{subequations}\label{4.12}
\begin{alignat}{2}
        &\sig^{-1}\pa_{x_3}(\sig^{-1}\pa_{x_3}\ch{\wit{E}}_1)+\pa_{x_2}^2\ch{\wit{E}}_1 \no\\
        &-\pa_{x_1}[\pa_{x_2}\ch{\wit{E}}_2+\sig^{-1}\pa_{x_3}(\sig^{-1}\ch{\wit{E}}_3)]
        -s^2\mu_j\vep_j\ch{\wit{E}}_1=0,\\
        &\sig^{-1}\pa_{x_3}(\sig^{-1}\pa_{x_3}\ch{\wit{E}}_2)+\pa_{x_1}^2\ch{\wit{E}}_2 \no\\
        &-\pa_{x_2}[\pa_{x_1}\ch{\wit{E}}_1+\sig^{-1}\pa_{x_3}(\sig^{-1}\ch{\wit{E}}_3)]
        -s^2\mu_j\vep_j\ch{\wit{E}}_2=0,\\
        &\pa_{x_3}(\pa_{x_1}\ch{\wit{E}}_1+\pa_{x_2}\ch{\wit{E}}_2)-\pa_{x_1}^2\ch{\wit{E}}_3
        -\pa_{x_2}^2\ch{\wit{E}}_3+s^2\mu_j\vep_j\ch{\wit{E}}_3=0.
\end{alignat}
\end{subequations}
Noting that
\be\label{4.13}
  \na\cdot(\wit{\vep}_j\ch{\wit{\bm E}})=\vep_j(\sig\pa_{x_1}\ch{\wit{E}}_1+\sig\pa_{x_2}\ch{\wit{E}}_2
  +\pa_{x_3}(\sig^{-1}\ch{\wit{E}}_3))=0,
\en
then inserting (\ref{4.13}) into (\ref{4.12}) yields
\begin{subequations}\label{4.14}
\begin{alignat}{2}\label{4.14a}
        \sig^{-1}\pa_{x_3}(\sig^{-1}\pa_{x_3}\ch{\wit{E}}_1)+\pa_{x_1}^2\ch{\wit{E}}_1
        +\pa_{x_2}^2\ch{\wit{E}}_1-s^2\mu_j\vep_j\ch{\wit{E}}_1&=0,\\ \label{4.14b}
    \sig^{-1}\pa_{x_3}(\sig^{-1}\pa_{x_3}\ch{\wit{E}}_2)+\pa_{x_1}^2\ch{\wit{E}}_2
    +\pa_{x_2}^2\ch{\wit{E}}_2-s^2\mu_j\vep_j\ch{\wit{E}}_2&=0,\\ \label{4.14c}
    \pa_{x_3}[\sig^{-1}\pa_{x_3}(\sig^{-1}\ch{\wit{E}}_3)]+\pa_{x_1}^2\ch{\wit{E}}_3
    +\pa_{x_2}^2\ch{\wit{E}}_3-s^2\mu_j\vep_j\ch{\wit{E}}_3&=0.
\end{alignat}
\end{subequations}

For convenience, we only consider the derivation of EtM operator $\wit{\mathscr{B}}_{1}$ on $\Ga_{h_1}$. 
To do this, taking the Fourier transform of (\ref{4.14a}) and (\ref{4.14b}) with respect to $\wit{x}$ 
 leads to the ODEs
\be\label{4.15}
    \begin{cases}
        \pa_{\hat{x}_3}^2\wih{\ch{\wit{E}}}_j(\xi,x_3)-(\mu_1\vep_1s^2+|\xi|^2)\wih{\ch{\wit{E}}}_j(\xi,x_3)=0,
         &\gin\;\;\Om_{h_1}^{L_1}.\\
        \wih{\ch{\wit{E}}}_j(\xi,x_3)=\wih{\ch{\wit{E}}}_j(\xi,h_1),&\on \;\;\Ga_{h_1}\\
        \wih{\ch{\wit{E}}}_j(\xi,x_3)=0,&\on\;\;\Ga_{h_1+L_1}.
    \end{cases}
\en
The general solutions of ODEs (\ref{4.15}) can be easily represented as
\be\label{4.16}
    \wih{\ch{\wit{E}}}_j(\xi,x_3)=A_je^{\bt_1(\xi)(\hat{x}_3-h_1)}+B_je^{-\bt_1(\xi)(\hat{x}_3-h_1)},
    \;\;h_1<x_3<h_1+L_1.
\en
Letting $x_3=h_1$ and $x_3=h_1+L_1$ and applying the boundary conditions in (\ref{4.16}), respectively yields
\ben
	A_j=-\frac{e^{-\bt_1(\xi)\wit{L}_1\wih{\ch{\wit{E}}}_j(\xi,h_1)}}
	{e^{\bt_1(\xi)\wit{L}_1}-e^{-\bt_1(\xi)\wit{L}_1}},\;\;\;\; 
	B_j=\frac{e^{\bt_1(\xi)\wit{L}_1\wih{\ch{\wit{E}}}_j(\xi,h_1)}}
	{e^{\bt_1(\xi)\wit{L}_1}-e^{-\bt_1(\xi)\wit{L}_1}},
\enn
where
\be\label{4.16+}
    \wit{L}_1:=\int_{h_1}^{h_1+L_1}\sig(\tau)d\tau=L_1+\frac{s_1^{-1}}{m+1}L_1\sig_1.
\en
Hence, the solution of (\ref{4.15}) is described as
\be\label{4.16++}
    \wih{\ch{\wit{E}}}_j(\xi,x_3)=\frac{e^{-\bt_1(\xi)(\hat{x}_3-h_1-\wit{L}_1)}-e^{\bt_1(\xi)(\hat{x}_3-h_1-\wit{L}_1)}
    }{e^{\bt_1(\xi)\wit{L}_1}-e^{-\bt_1(\xi)\wit{L}_1}}\wih{\ch{\wit{E}}}_j(\xi,h_1),\;\;h_1<x_3<h_1+L_1.
\en
Taking the normal derivative of (\ref{4.16++}) and evaluate the value on $\Ga_{h_1}$, we obtain
\be\label{normal}
    \frac{\pa\wih{\ch{\wit{E}}}_j(\xi,h_1)}{\pa x_3}=-\bt_1(\xi)\coth[\bt_1(\xi)\wit{L}_1]\wih{\ch{\wit{E}}}_j(\xi,h_1),
\en
where $\coth(t):=\frac{e^t+e^{-t}}{e^t-e^{-t}}$ denotes the hyperbolic cotangent function and the fact that
 $\sig=1$ on $\Ga_{h_1}$ has been used.

Next, we consider the equation (\ref{4.14c}). Let $P=\sig^{-1}\ch{\wit{E}}_3$,
by divergence free condition (\ref{4.13}) and PEC
boundary condition on $\Ga_{h_1+L_1}$, we have
\ben
  \pa_{x_3}P(\wit{x},x_3)=0,\;\;\on\;\;\Ga_{h_1+L_1}.
\enn
Taking the Fourier transform of (\ref{4.14c}) with respect to $\wit{x}$, we obtain
\be\label{4.19}
    \begin{cases}
        \pa_{\hat{x}_3}^2\wih{P}(\xi,x_3)-(\mu_1\vep_1s^2+|\xi|^2)\wih{P}(\xi,x_3)=0,
         &\gin\;\;\Om_{h_1}^{L_1}.\\
        \wih{P}(\xi,x_3)=\wih{\ch{\wit{E}}}_3(\xi,h_1),&\on \;\;\Ga_{h_1}\\
        \pa_{\hat{x}_3}\wih{P}(\xi,x_3)=0.&\on\;\;\Ga_{h_1+L_1}
    \end{cases}
\en
Similarly, we get the general solution of (\ref{4.19}) that
\be\label{4.20}
    \wih{P}(\xi,x_3)=\frac{e^{-\bt_1(\xi)(\hat{x}_3-h_1-\wit{L}_1)}+e^{\bt_1(\xi)(\hat{x}_3-h_1-\wit{L}_1)}
    }{e^{\bt_1(\xi)\wit{L}_1}+e^{-\bt_1(\xi)\wit{L}_1}}\wih{\ch{\wit{E}}}_3(\xi,h_1),\;\;h_1<x_3<h_1+L_1.
\en
Taking the normal derivative of (\ref{4.20}) and evaluate the value on $\Ga_{h_1}$, we obtain
\ben
    \frac{\pa\wih{P}(\xi,h_1)}{\pa x_3}=-\frac{\bt_1(\xi)}{\coth[\bt_1(\xi)\wit{L}_1]}\wih{\ch{\wit{E}}}_3(\xi,h_1),
\enn
It follows from (\ref{4.13}) and $\sig=1$ on $\Ga_{h_1}$ that
\ben
    \wih{\ch{\wit{E}}}_3(\xi,h_1)&=-\frac{\coth[\bt_1(\xi)\wit{L}_1]\pa_{x_3}\wih{P}(\xi,h_1)}{\bt_1(\xi)}\\
    &=\frac{\coth[\bt_1(\xi)\wit{L}_1]i}{\bt_1(\xi)}\left(\xi_1\wih{\ch{\wit{E}}}_1(\xi,h_1)
    +\xi_2\wih{\ch{\wit{E}}}_2(\xi,h_1)\right).
\enn
This, combining (\ref{4.11'})-(\ref{4.11''}) and (\ref{normal}) leads to
\ben
  \wih{\ch{\wit{H}}}_2(\xi,h_1)&=\frac{1}{\mu_1s}\left[i\xi_1\wih{\ch{\wit{E}}}_3(\xi,h_1)
  -\pa_{x_3}\wih{\ch{\wit{E}}}_1(\xi,h_1)\right]\\
  &=\frac{\coth[\bt_1(\xi)\wit{L}_1]}{\mu_1s}\left[-\frac{\xi_1}{\bt_1(\xi)}
  \left(\xi_1\wih{\ch{\wit{E}}}_1(\xi,h_1)+\xi_2\wih{\ch{\wit{E}}}_2(\xi,h_1)\right)
  +\bt_1(\xi)\wih{\ch{\wit{E}}}_1(\xi,h_1)\right]\\
  &=\frac{\coth[\bt_1(\xi)\wit{L}_1]}{\mu_1s\bt_1(\xi)}\left[\vep_1\mu_1s^2\wih{\ch{\wit{E}}}_1(\xi,h_1)
  +\xi_2\left(\xi_1\wih{\ch{\wit{E}}}_1(\xi,h_1)-\xi_2\wih{\ch{\wit{E}}}_2(\xi,h_1)\right)\right],
\enn
and
\ben
  -\wih{\ch{\wit{H}}}_1(\xi,h_1)&=\frac{1}{\mu_1s}\left[i\xi_2\wih{\ch{\wit{E}}}_3(\xi,h_1)
  -\pa_{x_3}\wih{\ch{\wit{E}}}_2(\xi,h_1)\right]\\
  &=\frac{\coth[\bt_1(\xi)\wit{L}_1]}{\mu_1s}\left[-\frac{\xi_2}{\bt_1(\xi)}
  \left(\xi_1\wih{\ch{\wit{E}}}_1(\xi,h_1)+\xi_2\wih{\ch{\wit{E}}}_2(\xi,h_1)\right)
  +\bt_1(\xi)\wih{\ch{\wit{E}}}_2(\xi,h_1)\right]\\
  &=\frac{\coth[\bt_1(\xi)\wit{L}_1]}{\mu_1s\bt_1(\xi)}\left[\vep_1\mu_1s^2\wih{\ch{\wit{E}}}_2(\xi,h_1)
  +\xi_1\left(\xi_1\wih{\ch{\wit{E}}}_2(\xi,h_1)-\xi_2\wih{\ch{\wit{E}}}_1(\xi,h_1)\right)\right].
\enn

Now, for any tangential vector $\bm\om=(\om_1,\om_2,0)^{\top}$ defined on $\Ga_{h_1}$,
we obtain the explicit representation of the EtM operator $\wit{\mathscr{B}}_{1}$
\be\label{dtn1_pml}
    \wit{\mathscr{B}}_{1}\bm\om=(v_1,v_2,0)^{\top},
\en
where
\ben
	\wih{v}_1
	&=\frac{\coth[\bt_1(\xi)\wit{L}_1]}{\mu_1s\bt_1(\xi)}\left[\vep_1\mu_1s^2\wih{\om}_1
	+\xi_2(\xi_2\wih{\om}_1-\xi_1\wih{\om}_2)\right],\\
	\wih{v}_2
	&=\frac{\coth[\bt_1(\xi)\wit{L}_1]}{\mu_1s\bt_1(\xi)}\left[\vep_1\mu_1s^2\wih{\om}_2
	+\xi_1(\xi_1\wih{\om}_2-\xi_2\wih{\om}_1)\right],
\enn
with
\be\label{thick1}
	\wit{L}_1:=\int_{h_1}^{h_1+L_1}\sig(\tau)d\tau=L_1+\frac{s_1^{-1}}{m+1}L_1\sig_1.
\en
Similarly, for any tangential vector $\bm\om=(\om_1,\om_2,0)^{\top}$ defined on $\Ga_{h_2}$, 
the EtM operator $\wit{\mathscr{B}}_{2}$ has the following form
\be\label{dtn2_pml}
	\wit{\mathscr{B}}_{2}\bm\om=(v_1,v_2,0)^{\top},
\en
where
\ben
	\wih{v}_1
	&=\frac{\coth[\bt_2(\xi)\wit{L}_2]}{\mu_2s\bt_2(\xi)}\left[\vep_2\mu_2s^2\wih{\om}_1
	+\xi_2(\xi_2\wih{\om}_1-\xi_1\wih{\om}_2)\right],\\
	\wih{v}_2
	&=\frac{\coth[\bt_2(\xi)\wit{L}_2]}{\mu_2s\bt_2(\xi)}\left[\vep_2\mu_2s^2\wih{\om}_2
	+\xi_1(\xi_1\wih{\om}_2-\xi_2\wih{\om}_1)\right],
\enn
with
\be\label{thick2}
	\wit{L}_2:=\int_{h_2-L_2}^{h_2}\sig(\tau)d\tau=L_2+\frac{s_1^{-1}}{m+1}L_2\sig_2.
\en

We now find that the truncated PML problem (\ref{4.7}) is equivalently reduced to the
following boundary value problem
\be\label{4.17}
    \begin{cases}
     \na\ti((s\wit{\mu})^{-1}\na\ti\ch{\wit{\bm E}})+s\wit{\vep}\ch{\wit{\bm E}}=-\ch{\bm J},
     &\gin\;\;\Om_{h+L}\\
     \De^*\ch{\wit{\bm u}}-\rho_is^2\ch{\wit{\bm u}}=\0,& \gin\;\; \Om\\
      \n\ti[\ch{\wit{\bm E}}]=\n\ti[\mu^{-1}\na\ti\ch{\wit{\bm E}}]=\0,&\on\;\;\Ga_f\\
    -\n\ti(s\wit{\mu})^{-1}\na\ti\ch{\wit{\bm E}}=\bm{T\ch{\wit{\bm u}}},&\on\;\; \Ga\\
    \n\ti\ch{\wit{\bm E}}=\n\ti s\ch{\wit{\bm u}},&\on\;\; \Ga\\
   (s\mu_j)^{-1}\na\ti\ch{\wit{\bm E}}\ti\n_j+\wit{\mathscr{B}}_j[\ch{\wit{\bm E}}_{\Ga_{h_j}}]=\0,
   &\on\;\;\Ga_{h_j},\quad j=1,2.
    \end{cases}
\en
The variational formulation of (\ref{4.17}) can be obtained: 
to find $(\ch{\wit{\bm E}},\ch{\wit{\bm u}})\in\mathscr{X}_s$ such that
\be\label{4.18}
    a_{p}\big((\ch{\wit{\bm E}},\ch{\wit{\bm u}}),(\bm V,\bm v)\big)=-\int_{\Om_{h}}\ch{\bm J}\cdot\ov{\bm V}dx
    \;\;\;\;\text{for all}\;\;(\bm V,\bm v)\in \mathscr{X}_s,
\en
where the sesquilinear form $a_p(\cdot,\cdot)$ is defined as
\begin{align}\label{4.18a}
    a_p\big((\ch{\wit{\bm{E}}},\ch{\wit{\bm{u}}}),(\bm{V},\bm{v})\big)&=\int_{\Om_h}\big((s\wit{\mu})^{-1}
    (\na\ti\ch{\wit{\bm{E}}})\cdot(\na\ti\ov{\bm V})dx+s\wit{\vep}\ch{\wit{\bm{E}}}\cdot\ov{\bm V}\big)dx  \\\no
    &+\int_\Om\Big[\ov{s}\mathcal{E}(\ch{\wit{\bm{u}}},\ov{\bm v})+\rho_is|s|^2\ch{\wit{\bm{u}}}\cdot\ov{\bm v}\Big]dx
    +\sum_{j=1}^2\langle\wit{\mathscr{B}}_j[\ch{\wit{\bm{E}}}_{\Ga_{h_j}}],\bm{V}_{\Ga_{h_j}}\rangle_{\Ga_{h_j}}.
\end{align}

\subsection{Exponential convergence of the time domain PML solution}
In this section, we shall give an error estimate between the solution $(\bm E,\bm u)$ of
the original equations (\ref{2.7}) and the solution $(\wit{\bm E},\wit{\bm u})$ of
the truncated PML problem (\ref{4.10}). The following fundamental Lemma on the error estimate 
between the EtM operators $\mathscr{B}_j$ and the EtM operators $\wit{\mathscr{B}}_j$ 
is essential to the exponential convergence of the PML method.

\begin{lem}\label{thm4.3}
For $j=1,2$, denote $\ov{L}_j=\frac{L_j\sig_j}{m+1}$. Then for $s=s_1+is_2$ with $s_1>0$, 
we have the following estimate
\ben
	\|\mathscr{B}_j-\wit{\mathscr{B}}_{j} \|_{L(H^{-1/2}(\curl,\Ga_{h_j}),H^{-1/2}(\dive,\Ga_{h_j}))}
	\leq\Ga_j\frac{2e^{-2\sqrt{\vep_j\mu_j}\ov{L}_j}}{1-e^{-2\sqrt{\vep_j\mu_j}\ov{L}_j}}:=M_j,
\enn
where $\Ga_j$ is defined in \eqref{4.24''}, and $L(X,Y)$ denotes the standard space of the 
bounded linear operators from the Hilbert space $X$ to the Hilbert space $Y$.
\end{lem}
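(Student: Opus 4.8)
The plan is to work frequency-by-frequency on the symbol level of the two operators, using the explicit Fourier representations already obtained in \eqref{dtn} and \eqref{dtn1_pml}--\eqref{dtn2_pml}. Fix $j$ and a tangential field $\bm\om=(\om_1,\om_2,0)^\top$ on $\Ga_{h_j}$. The difference $(\mathscr{B}_j-\wit{\mathscr{B}}_j)\bm\om$ has, componentwise, the same matrix structure in $(\xi_1,\xi_2)$ acting on $(\wih\om_1,\wih\om_2)$, so that the symbol of $\mathscr{B}_j-\wit{\mathscr{B}}_j$ is
\[
  \frac{1-\coth[\bt_j(\xi)\wit{L}_j]}{\mu_j s\,\bt_j(\xi)}\,M(\xi),
\qquad
  M(\xi):=\begin{pmatrix}\vep_j\mu_j s^2+\xi_2^2 & -\xi_1\xi_2\\ -\xi_1\xi_2 & \vep_j\mu_j s^2+\xi_1^2\end{pmatrix}.
\]
First I would rewrite the scalar prefactor: since $1-\coth t = -2e^{-t}/(e^{t}-e^{-t}) = -2e^{-2t}/(1-e^{-2t})$, with $t=\bt_j(\xi)\wit{L}_j$, the whole difference is controlled by
\[
  \Bigl|\tfrac{1}{\mu_j s\,\bt_j(\xi)}\Bigr|\;\|M(\xi)\|\;\frac{2\,|e^{-2\bt_j(\xi)\wit{L}_j}|}{|1-e^{-2\bt_j(\xi)\wit{L}_j}|}.
\]
The first two factors together are \emph{exactly} the symbol bound that underlies the boundedness of $\mathscr{B}_j$ from $H^{-1/2}(\curl,\Ga_{h_j})$ to $H^{-1/2}(\dive,\Ga_{h_j})$ in Lemma~\ref{lem2.1} (equivalently the estimates from \cite{GL2017}); so one packages them into the constant $\Ga_j$ of \eqref{4.24''}, tracking the $s$-dependence carefully — this is what produces the stated $\Ga_j$. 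What remains is to bound the exponential factor $2|e^{-2\bt_j\wit{L}_j}|/|1-e^{-2\bt_j\wit{L}_j}|$ uniformly in $\xi\in\R^2$.

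The key estimate is the lower bound $\Rt[\bt_j(\xi)]\ge \sqrt{\vep_j\mu_j}\,s_1$ for all $\xi$, which follows from $\bt_j(\xi)=(\vep_j\mu_j s^2+|\xi|^2)^{1/2}$ with $\Rt\bt_j>0$: writing $\bt_j^2=\vep_j\mu_j(s_1^2-s_2^2)+|\xi|^2 + 2i\vep_j\mu_j s_1 s_2$, one checks that the real part of the principal square root is monotone and minimized as $|\xi|^2\to$ its extreme admissible value, yielding $(\Rt\bt_j)^2\ge\vep_j\mu_j s_1^2$; this is a short calculation with the half-angle formula for $\sqrt{a+bi}$. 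Next, recall $\wit{L}_j = L_j + \dfrac{s_1^{-1}}{m+1}L_j\sig_j$ from \eqref{thick1}--\eqref{thick2}, so $\Rt[\bt_j\wit{L}_j]\ge \sqrt{\vep_j\mu_j}\,s_1\wit{L}_j \ge \sqrt{\vep_j\mu_j}\,\dfrac{L_j\sig_j}{m+1} = \sqrt{\vep_j\mu_j}\,\ov{L}_j$ — the $s_1$ cancels, which is precisely the point of the real coordinate stretch with $[\Rt(s)]^{-1}$. Hence $|e^{-2\bt_j\wit{L}_j}| = e^{-2\Rt[\bt_j\wit{L}_j]} \le e^{-2\sqrt{\vep_j\mu_j}\,\ov{L}_j}$, and since this is $<1$ one has $|1-e^{-2\bt_j\wit{L}_j}|\ge 1-|e^{-2\bt_j\wit{L}_j}|\ge 1-e^{-2\sqrt{\vep_j\mu_j}\,\ov{L}_j}$; combining gives the ratio bound $\le 2e^{-2\sqrt{\vep_j\mu_j}\,\ov{L}_j}/(1-e^{-2\sqrt{\vep_j\mu_j}\,\ov{L}_j})$, uniformly in $\xi$.

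Finally I would assemble the pieces. Using Plancherel/Parseval for the Fourier transform in $\wit x$ together with the duality pairing that defines the $H^{-1/2}(\curl)\to H^{-1/2}(\dive)$ operator norm (as in Appendix~\ref{ap2} and Lemma~\ref{lem2.1}), the pointwise-in-$\xi$ symbol bound above transfers directly to an operator-norm bound, giving
\[
  \|\mathscr{B}_j-\wit{\mathscr{B}}_j\|_{L(H^{-1/2}(\curl,\Ga_{h_j}),\,H^{-1/2}(\dive,\Ga_{h_j}))}
  \;\le\;\Ga_j\,\frac{2e^{-2\sqrt{\vep_j\mu_j}\,\ov{L}_j}}{1-e^{-2\sqrt{\vep_j\mu_j}\,\ov{L}_j}}=M_j,
\]
as claimed. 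The main obstacle, and the step deserving the most care, is the uniform lower bound $\Rt[\bt_j(\xi)\wit{L}_j]\ge \sqrt{\vep_j\mu_j}\,\ov{L}_j$ independent of $\xi$ and of $s_1$: everything else is either bookkeeping of the symbol of $\mathscr{B}_j$ into $\Ga_j$ or a direct Plancherel argument. A secondary technical point is to confirm that the \emph{same} matrix $M(\xi)$ governs both operators — i.e.\ that $\mathscr{B}_j$ and $\wit{\mathscr{B}}_j$ differ only through the scalar factor $1$ versus $\coth[\bt_j\wit{L}_j]$ — which is immediate by comparing \eqref{dtn} with \eqref{dtn1_pml}, and is why the difference factors so cleanly.
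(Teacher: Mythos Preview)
Your outline is correct and follows essentially the same route as the paper: compute the difference of the two EtM symbols, observe that they differ only by the scalar factor $1-\coth[\bt_j(\xi)\wit L_j]$, bound this factor via the lower bound $\Rt\bt_j(\xi)\ge\sqrt{\vep_j\mu_j}\,s_1$ (attained at $\xi=0$), exploit the $s_1$-cancellation in $s_1\wit L_j\ge \ov L_j$, and then estimate the duality pairing using the $H^{-1/2}(\curl)$ norm structure.

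The one point you underestimate is the constant $\Ga_j$. You write that the non-exponential part is ``exactly the symbol bound that underlies the boundedness of $\mathscr{B}_j$\ldots in Lemma~\ref{lem2.1}'' and propose to package it into $\Ga_j$. But Lemma~\ref{lem2.1} does not record an explicit constant, and $\Ga_j$ in \eqref{4.24''} is \emph{defined} inside the proof of this very lemma: the paper carries out a careful case analysis (splitting on the signs of $s_1^2-s_2^2$, of $1-\vep_j\mu_j s_1^2$, and on the location of an auxiliary extremum) to compute
\[
\Lambda_j(s_1,s_2)\;=\;\sup_{\xi\in\R^2}\frac{(1+|\xi|^2)^{1/2}}{|\bt_j(\xi)|},
\]
and then sets $\Ga_j=\mu_j^{-1}|s|^{-1}\Lambda_j(s_1,s_2)\max\{\vep_j\mu_j|s|^2,1\}$. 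The factor $\max\{\vep_j\mu_j|s|^2,1\}$ comes out cleanly from the decomposition $M(\xi)\wih{\bm u}\cdot\ov{\wih{\bm v}}=\vep_j\mu_j s^2\,\wih{\bm u}\cdot\ov{\wih{\bm v}}+(\xi_1\wih u_2-\xi_2\wih u_1)\overline{(\xi_1\wih v_2-\xi_2\wih v_1)}$, which matches the $H^{-1/2}(\curl)$ norm structure term-by-term; a raw matrix-norm bound on $M(\xi)$ would not reproduce this. So your plan is sound, but that ``tracking the $s$-dependence carefully'' step is where most of the paper's actual work sits, and it is not something you can simply cite from \cite{GL2017}.
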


\begin{proof}
Given $\bm u=(u_1,u_2,0)^{\top},\;\bm v=(v_1,v_2,0)^{\top}\in H^{-1/2}(\curl,\Ga_{h_j})$,
we have from the definitions of $\mathscr{B}_j$ (see (\ref{dtn})) and $\wit{\mathscr{B}}_j$
 (see (\ref{dtn1_pml}) and (\ref{dtn2_pml})) that
\begin{align}\label{4.23''}
	&\;\;\;\;\langle(\mathscr{B}_j-\wit{\mathscr{B}}_{j})\bm u,\bm v\rangle_{\Ga_{h_j}}\no\\
	&=\int_{\R^2}\frac{(1+|\xi|^2)^{1/2}}{\mu_js\bt_j(\xi)}(1-\coth[\bt_j(\xi)\wit{L}_j])(1+|\xi|^2)^{-1/2}\no\\
	&\quad\left[\vep_j\mu_js^2(\wih{u}_1\ov{\wih{v}}_1+\wih{u}_2\ov{\wih{v}}_2)
	+(\xi_1\wih{u}_2-\xi_2\wih{u}_1)\cdot(\xi_1\ov{\wih{v}}_2-\xi_2\ov{\wih{v}}_1)\right]d\xi.
\end{align}
Hence we need to estimate the term
\ben
  \frac{(1+|\xi|^2)^{1/2}}{|\bt_j(\xi)|}\left|1-\coth[\bt_j(\xi)\wit{L}_j]\right|.
\enn
Firstly, we denote
\ben
	\vep_j\mu_js^2 = a_j + i b_j,\;{\rm with}\;a_j = \vep_j\mu_j(s_1^2-s_2^2),
	\;b_j = 2\vep_j\mu_js_1s_2,
\enn
and
\ben
	\bt_j^2 = \vep_j\mu_js^2
	+ |\xi|^2 = \phi_j + i b_j,\;{\rm with}\;\phi_j = \Rt(\vep_j\mu_js^2) + |\xi|^2 = a_j + |\xi|^2.
\enn
Noting that
\ben
	\frac{(1+|\xi|^2)^{1/2}}{|\bt_j(\xi)|} = \Big[\frac{(1+\phi_j-a_j)^2}{\phi_j^2+b_j^2}\Big]^{1/4},
\enn
we define an auxiliary function
\ben
	F_j(t) = \frac{(1+t-a_j)^2}{t^2+b_j^2},\;t\geq a_j.
\enn
Simple calculations gives the derivative
\ben
	F_j^{\prime}(t)=\frac{2(t-a_j+1)[(a_j-1)t+b_j^2]}{(t^2+b_j^2)^2}.
\enn
We consider the following two cases:

\begin{enumerate}[(I)]

\item If $s_2^2\geq s_1^2$, then $a_j\leq 0$. Setting $K_j:=\frac{b_j^2}{1-a_j}$, 
it can be verified that $F_j(t)$ increases in $[a_j,K_j]$, and decreases in $[K_j,+\infty)$. 
Hence $F_j(t)$ reaches its maximum $\frac{(1-a_j)^2+b_j^2}{b_j^2}$ at $K_j$.
 
\item If $s_2^2<s_1^2$, then $a_j>0$. We have another three possibilities.

\item[(II.a)] $1-a_j<0$, then $F_j(t)$ increases in $[a_j,+\infty)$, hence
\ben
	F_j(t)\leq{\lim_{t \to +\infty}}F_j(t)=1.
\enn

\item[(II.b)] $1-a_j=0$, it can be easily verified that
\ben
	F_j(t)=\frac{t^2}{t^2+b_j^2}\leq 1.
\enn

\item[(II.c)] $1-a_j>0$, that is $1-\vep_j\mu_js_1^2+\vep_j\mu_js_2^2>0$. 
In this case, we need to compare the size of $a_j$ and $K_j$. Note that
 $K_j\leq a_j$ is equivalent to
\ben
	s_2^4 + \left(2s_1^2 + \frac{1}{\vep_j\mu_j}\right)s_2^2 + 
	s_1^2\left(s_1^2-\frac{1}{\vep_j\mu_j}\right) \leq 0.
\enn
Thus define
\be\label{epsion}
	\vep_0(s_1):=-(s_1^2+\frac{1}{2\vep_j\mu_j})+\sqrt{\frac{2s_1^2}
	{\vep_j\mu_j}+\frac{1}{4\vep_j^2\mu_j^2}}.
\en
We further have three cases:
\item[(II.c.i)] $1-\vep_j\mu_js_1^2<0$, then $s_2^2>\frac{\vep_j\mu_js_1^2-1}
{\vep_j\mu_j}>0$ and $\vep_0(s_1)<0$, then $a_j<K_j$.
Hence
\ben
	F_j(t)\leq F_j(K_j)=\frac{(1-a_j)^2+b_j^2}{b_j^2}.
\enn

\item[(II.c.ii)] $1-\vep_j\mu_js_1^2=0$, then $s_2^2>0$ and $\vep_0(s_1)=0$, 
it holds that $F_j(t)\leq F_j(K_j)=1+\frac{s_2^2}{4s_1^2}$.

\item[(II.c.iii)] $1-\vep_j\mu_js_1^2>0$, then we have the following two cases:

\item[(II.c.iii.1)] If $s_2^2\leq\vep_0(s_1)$, then $K_j\leq a_j$, therefore $F_j(t)$ 
decreases in $[a_j,+\infty)$, then
\ben
	F_j(t)\leq F_j(a_j)=\frac{1}{a_j^2+b_j^2}.
\enn

\item[(II.c.iii.2)] If $s_2^2>\vep_0(s_1)$, then $K_j>a_j.$ Hence
\ben
	F_j(t)\leq F_j(K_j)=\frac{(1-a_j)^2+b_j^2}{b_j^2}.
\enn

\end{enumerate}
Recalling the definitions of $a_j$ and $b_j$, by the above discussions, we arrive at
\be\label{4.23'}
	\frac{(1+|\xi|^2)^{1/2}}{|\bt_j(\xi)|}\leq\Lambda_j(s_1,s_2),
\en
where $\Lambda_j(s_1,s_2)$ is defined as:\\
(1) when $1-\vep_j\mu_js_1^2< 0$,
\begin{equation*}
\Lambda_j(s_1,s_2)=
\left\{
\begin{aligned}
&1, &0\leq s_2^2\leq s_1^2-\frac{1}{\vep_j\mu_j}, \\
 &\Big[1+\frac{\big(1-\vep_j\mu_j(s_1^2-s_2^2)\big)^2}{4\vep_j^2\mu_j^2s_1^2s_2^2}\Big]^{1/4},
 & s_2^2>s_1^2-\frac{1}{\vep_j\mu_j}.
\end{aligned}
\right.
\end{equation*}
(2) when $1-\vep_j\mu_js_1^2 = 0$,
\[\Lambda_j(s_1,s_2)=\big(1+\frac{s_2^2}{4s_1^2}\big)^{1/4}.
\]
(3) when $1-\vep_j\mu_js_1^2> 0$,
\begin{equation*}
\Lambda_j(s_1,s_2)=
\left\{
\begin{aligned}
&\frac{1}{\sqrt{\vep_j\mu_j}|s|},   &0\leq s_2^2\leq \vep_0(s_1), \\
 &\Big[1+\frac{\big(1-\vep_j\mu_j(s_1^2-s_2^2)\big)^2}{4\vep_j^2\mu_j^2s_1^2s_2^2}\Big]^{1/4},
 & s_2^2>\vep_0(s_1).
\end{aligned}
\right.
\end{equation*}
In the following, we further estimate
\begin{align}\label{4.24}
    \sup\limits_{\xi\in\R^2}\left|1-\coth[\bt_j(\xi)\wit{L}_j]\right|
    ={}&\sup\limits_{\xi\in\R^2}\frac{\left|2e^{-2\bt_{j_r}(\xi)\wit{L}_j}\right|}
    {\left|1-e^{-2(\bt_{j_r}(\xi)+i\bt_{j_i}(\xi))\wit{L}_j}\right|}\no\\
    \leq&\sup\limits_{\xi\in\R^2}\frac{2e^{-2\bt_{j_r}(\xi)\wit{L}_j}}{1-e^{-2\bt_{j_r}(\xi)\wit{L}_j}},
\end{align}
where $\bt_{j_r}(\xi)=\Rt[\bt_j(\xi)]$, and $\bt_{j_i}(\xi)=\I[\bt_j(\xi)]$.
By the formulas
\begin{align*}
    z^{1/2}=\sqrt{\frac{|z|+z_1}{2}}+i{\rm sgn}(z_2)\sqrt{\frac{|z|-z_1}{2}}
    ,\;\;\text{for}\;\; z=z_1+iz_2,\; \Rt[z^{1/2}]>0,
\end{align*}
we have
\begin{align*}
    \bt_{j_r}(\xi)&=\sqrt{\frac{|\bt_j^2(\xi)|+\Rt[\bt_j^2(\xi)]}{2}}\\
    &=\left[\frac{[(\vep_j\mu_j(s_1^2-s_2^2)+|\xi|^2)^2+4\vep_j^2\mu_j^2s_1^2s_2^2]^{1/2}
    +\vep_j\mu_j(s_1^2-s_2^2)+|\xi|^2}{2}\right]^{1/2}.
\end{align*}

Note that $\frac{2e^{-2\bt_{j_r}(\xi)\wit{L}_j}}{1-e^{-2\bt_{j_r}(\xi)\wit{L}_j}}$ is 
monotonically decreasing with respect to $\bt_{j_r}(\xi)$. Hence, we need to seek the maximum of $\bt_{j_r}(\xi)$ in $\R^2$.
Simple calculations yields that $\xi=0$ is the unique extreme point of the function $\bt_{j_r}(\xi)$, and
\begin{align*}
  \bt_{j_r}(0)=\sqrt{\vep_j\mu_j}s_1,\frac{2e^{-2\bt_{j_r}(\xi)\wit{L}_j}}
  {1-e^{-2\bt_{j_r}(\xi)\wit{L}_j}}\Big|_{\xi=0}
  =\frac{2e^{-2\sqrt{\vep_j\mu_j}s_1\wit{L}_j}}{1-e^{-2\sqrt{\vep_j\mu_j}s_1\wit{L}_j}}.
\end{align*}
Besides, $\bt_{j_r}(\xi)\ra +\infty$, thereby, $\frac{2e^{-2\bt_{j_r}(\xi)\wit{L}_j}}
{1-e^{-2\bt_{j_r}(\xi)\wit{L}_j}}\ra 0$, as $\xi\ra\infty$.

By the definitions of $\wit{L}_1$ and $\wit{L}_2$ (see (\ref{thick1}) and (\ref{thick2})), 
we therefore conclude that
\begin{align}\label{4.24'}
    \sup\limits_{\xi\in\R^2}\frac{2e^{-2\bt_{j_r}(\xi)\wit{L}_j}}{1-e^{-2\bt_{j_r}(\xi)\wit{L}_j}}
    ={}&\frac{2e^{-2\sqrt{\vep_j\mu_j}s_1\wit{L}_j}}{1-e^{-2\sqrt{\vep_j\mu_j}s_1\wit{L}_j}}
    \leq\frac{2e^{-2\sqrt{\vep_j\mu_j}\ov{L}_j}}{1-e^{-2\sqrt{\vep_j\mu_j}\ov{L}_j}}.
\end{align}
Combining (\ref{4.23'}) and(\ref{4.24'}) as well as Cauchy-Schwartz inequality for (\ref{4.23''}) yields
\begin{align*}
  |\langle(\mathscr{B}_j-\wit{\mathscr{B}}_{j})\bm u,\bm v\rangle_{\Ga_{h_j}}|
  \leq \Ga_j\frac{2e^{-2\sqrt{\vep_j\mu_j}\ov{L}_j}}{1-e^{-2\sqrt{\vep_j\mu_j}\ov{L}_j}}
  \Vert\bm u\Vert_{H^{-1/2}(\curl,\Ga_{h_j})}\Vert\bm v\Vert_{H^{-1/2}(\curl,\Ga_{h_j})},
\end{align*}
where
\begin{align}\label{4.24''}
  \Ga_j=\frac{1}{\mu_j|s|}\Lambda_j(s_1,s_2)
  \max\{\vep_j\mu_j|s|^2,1\}.
\end{align}
This completes the proof.
\end{proof}

Let $\bm\om=(\ch{\bm E},\ch{\bm u})$ and $\bm\om_p=(\ch{\wit{\bm E}},\ch{\wit{\bm u}})$ 
be the solutions of the variational problems (\ref{3.4}) and (\ref{4.18}), respectively. 
By the definitions of  variational formulations of $a(\cdot,\cdot)$ and $a_p(\cdot,\cdot)$, 
we obtain
\begin{align}\label{4.25}
   &\quad\;|a(\bm\om-\bm\om_p,\bm\om-\bm\om_p)| \no\\
&=|a(\bm\om,\bm\om-\bm\om_p)-a(\bm\om_p,\bm\om-\bm\om_p)|\no\\
&=|a_p(\bm\om_p,\bm\om-\bm\om_p)-a(\bm\om_p,\bm\om-\bm\om_p)|\no\\
&=\Big|\sum_{j=1}^{2}\langle(\mathscr{B}_j-\wit{\mathscr{B}}_{j})
[\check{\wit{\bm E}}_{\Ga_{h_j}}],(\check{\bm E}-
\check{\wit{\bm E}})_{\Ga_{h_j}}\rangle_{\Ga_{h_j}}\Big|\no\\
&\leq\eta^2\sum_{j=1}^{2}\|\mathscr{B}_j-\wit{\mathscr{B}}_{j}
        \|_{L(H^{-1/2}(\curl,\Ga_{h_j}),H^{-1/2}(\dive,\Ga_{h_j}))}\|\bm\om_p\|_{\mathscr{X}_s}
        \|\bm\om-\bm\om_p\|_{\mathscr{X}_s},
\end{align}
where the constant $\eta=\max\{\sqrt{1+(h_1-h_2)^{-1}},\sqrt{2}\}$ is defined in Lemma \ref{lemB.2}.
Now we arrive at our main theorem by concluding the above argument.
\begin{theorem}\label{convergence}
Let $(\bm E,\bm u)$ be the solution of problem \eqref{2.7}, and $(\wit{\bm E},\wit{\bm u})$
 be the solution of problem \eqref{4.10} with $s_1=1/T$, $\sig_0=\max\{\sig_1,\sig_2\}$, 
 then under the assumptions \eqref{assumption} and \eqref{assumption1}
we have the following error estimate
  \begin{align}\label{result}
    &\int_{0}^{T}(\|\bm E-\wit{\bm E}\|_{H(\curl,\Om_h)}^2+\|\bm u-\wit{\bm u}\|_{H^1(\Om)^3}^2)dt\\\no
    \lesssim{}&\max\{1,T^2\}(T^4+2T^2)(\g_1+\g_2)(1+\sig_0T)^2
    \Big(\sum_{j=1}^{2}\frac{2e^{-\sqrt{\vep_j\mu_j}\sigma_jL_j}}
    {1-e^{-\sqrt{\vep_j\mu_j}\sigma_jL_j}}\Big)^2\|\bm J\|^2_{H^5(0,T;L^2(\Om_h)^3)},
\end{align}
where $\g_1$ and $\g_2$ are positive constants independent of  $(\bm E,\bm u)$ 
and $(\wit{\bm E},\wit{\bm u})$, but that may depend on $T$.
\end{theorem}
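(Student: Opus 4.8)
The strategy is to derive a frequency--domain error bound between the two reduced variational problems and then transfer it to the time domain by the Plancherel identity for the Laplace transform, in the spirit of the proof of Theorem \ref{thm3.4}. Write $\bm\om=(\ch{\bm E},\ch{\bm u})$ and $\bm\om_p=(\ch{\wit{\bm E}},\ch{\wit{\bm u}})$ for the solutions of the reduced problems (\ref{3.4}) and (\ref{4.18}); since both are posed in the \emph{same} Hilbert space $\mathscr{X}_s$, the difference $\bm\om-\bm\om_p$ lies in $\mathscr{X}_s$ and the Galerkin--orthogonality identity (\ref{4.25}) applies. Combining (\ref{4.25}) with the uniform coercivity of $a(\cdot,\cdot)$ obtained in the proof of Lemma \ref{thm3.1} --- the bound $\Rt\,a\big((\ch{\bm E},\ch{\bm u}),(\ch{\bm E},\ch{\bm u})\big)\ge C\|(\ch{\bm E},\ch{\bm u})\|_{\mathscr{X}_s}^2$ with $C=\min\{s_1|s|^{-2}C_1,\,s_1C_2\}$, valid for every element of $\mathscr{X}_s$ --- one gets
\[
\|\bm\om-\bm\om_p\|_{\mathscr{X}_s}\;\le\;\frac{\eta^2}{C}\,\Big(\sum_{j=1}^2\|\mathscr{B}_j-\wit{\mathscr{B}}_j\|_{L(H^{-1/2}(\curl,\Ga_{h_j}),\,H^{-1/2}(\dive,\Ga_{h_j}))}\Big)\,\|\bm\om_p\|_{\mathscr{X}_s}.
\]

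Next I would insert the EtM error bound $\|\mathscr{B}_j-\wit{\mathscr{B}}_j\|\le M_j$ from Lemma \ref{thm4.3} and the PML stability bound for $\|\bm\om_p\|_{\mathscr{X}_s}$ coming from the estimates (\ref{4.9})--(\ref{4.9a}) of Lemma \ref{thm4.1} (which also control the restriction of $\ch{\wit{\bm E}}$ to $\Om_h$, that restriction only decreasing the norm). The decisive structural fact is that in $M_j=\Ga_j\,\frac{2e^{-2\sqrt{\vep_j\mu_j}\,\ov L_j}}{1-e^{-2\sqrt{\vep_j\mu_j}\,\ov L_j}}$, with $\ov L_j=L_j\sig_j/(m+1)$, the exponentially small factor does \emph{not} depend on $s$, so it factors out of every subsequent integral in $s_2$; the remaining pieces $\Ga_j$, $C^{-1}$, $\eta$ and the right--hand sides of (\ref{4.9})--(\ref{4.9a}) are products of powers of $|s|$, $|s|^{-1}$, $s_1^{-1}$ and $(1+s_1^{-1}\sig_0)$, where in particular the explicit description (\ref{4.23'}) of $\Lambda_j(s_1,s_2)$ shows that $\Lambda_j$ grows at most like a fixed power of $|s|$ and stays bounded once $s_1$ is fixed. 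This produces a frequency--domain estimate of the form
\[
\|\bm\om-\bm\om_p\|_{\mathscr{X}_s}^2\;\lesssim\;(1+s_1^{-1}\sig_0)^2\,R(s_1)\,\Big(\sum_{j=1}^2\frac{2e^{-2\sqrt{\vep_j\mu_j}\,\ov L_j}}{1-e^{-2\sqrt{\vep_j\mu_j}\,\ov L_j}}\Big)^2\big(|s|^{2}+|s|^{4}+\cdots+|s|^{10}\big)\,\|\ch{\bm J}\|_{L^2(\Om_h)^3}^2 ,
\]
in which $R(s_1)$ gathers the negative powers of $s_1$.

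Finally I would integrate in $s_2$ along the line $\Rt(s)=s_1$ and apply the Plancherel identity (\ref{A.5}) in the form $\frac{1}{2\pi}\int_{-\ify}^{\ify}\big\||s|^{k}\ch{\bm J}\big\|_{L^2(\Om_h)^3}^2\,ds_2=\int_0^{\ify}e^{-2s_1t}\|\pa_t^{k}\bm J\|_{L^2(\Om_h)^3}^2\,dt$ for $0\le k\le5$, which is legitimate because $\pa_t^l\bm J|_{t=0}=\bm 0$ for $l=0,\dots,4$ (assumption (\ref{assumption})) gives $\widehat{\pa_t^k\bm J}=s^k\ch{\bm J}$; this is exactly why the $H^5$--regularity of $\bm J$ is needed, the worst term above being $|s|^{10}$. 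Using $\int_0^T g\,dt\le e^{2s_1T}\int_0^{\ify}e^{-2s_1t}g\,dt$ and assumption (\ref{assumption1}) turns the right--hand side into $\|\bm J\|_{H^5(0,T;L^2(\Om_h)^3)}^2$, and since $\sig\equiv 1$ on $\Om_h$ (so $\wit{\bm E}$ there is the genuine PML field) the left--hand side is $\int_0^T(\|\bm E-\wit{\bm E}\|_{H(\curl,\Om_h)}^2+\|\bm u-\wit{\bm u}\|_{H^1(\Om)^3}^2)\,dt$. Choosing $s_1=1/T$ finally turns $e^{2s_1T}$ into the constant $e^2$ and every $s_1^{-1}$ into a power of $T$; combining $R(s_1)$ with the $\min\{1,s_1^2\}^{-1}$ factor appearing in (\ref{4.9a}) and with $\max\{1,|s|^{-2}\}\le\max\{1,T^2\}$ yields the polynomial factors $\max\{1,T^2\}(T^4+2T^2)(\g_1+\g_2)(1+\sig_0T)^2$ of (\ref{result}), the constants $\g_j$ absorbing the $s_1$--dependent part of $\Lambda_j$ and the geometric constant $\eta$.

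The step I expect to be the main obstacle is the bookkeeping of this algebraic $|s|$-- and $s_1^{-1}$--growth: one has to verify carefully that $C^{-1}$, $\Ga_j$ (hence $\Lambda_j(s_1,s_2)$) and the PML stability constants grow only polynomially in $|s|$, so that after trading at most five powers of $s$ for time derivatives of $\bm J$ the $s_2$--integral converges; and one must confirm that the exponential factor $e^{-2\sqrt{\vep_j\mu_j}\,\ov L_j}$ is genuinely uniform in $s$ --- which holds because $\bt_{j_r}(\xi)$ attains its minimum $\sqrt{\vep_j\mu_j}\,s_1$ at $\xi=0$, as shown in the proof of Lemma \ref{thm4.3} --- since this uniformity is precisely what makes the PML convergence exponential in $L_j$ rather than merely algebraic.
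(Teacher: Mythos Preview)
Your proposal is correct and follows essentially the same route as the paper: combine the coercivity bound from Lemma \ref{thm3.1} with the Galerkin identity (\ref{4.25}) and Lemma \ref{thm4.3} to get a frequency--domain error estimate, factor out the $s$--independent exponential, control $\|\bm\om_p\|_{\mathscr{X}_s}$ via Lemma \ref{thm4.1}, then pass to the time domain by the Parseval identity (\ref{A.5}) and set $s_1=1/T$. The only technical point the paper makes slightly more explicit than you do is that, to turn the polynomial growth of $C^{-2}\Ga_j^2$ in $|s|$ into a clean bound, it splits the $s_2$--integral into a compact piece $[0,M]$ (producing the constant $\g_2$) and a tail $[M,\infty)$ where $C^{-2}\Ga_j^2\le\g_1|s|^{8}$; this is precisely the bookkeeping step you flag as the main obstacle, and your analysis of the growth of $\Lambda_j$, $\Ga_j$ and $C^{-1}$ already contains everything needed to carry it out.
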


\begin{proof}
Combining (\ref{4.25}) with Lemma \ref{thm4.3} and the uniform coercivity
 (\ref{coercivity}) of $a(\cdot,\cdot)$, we have
\ben
    \|\bm\om-\bm\om_p\|_{\mathscr{X}_s}
    \leq{}C^{-1}\eta^2(M_1+M_2)\|\bm\om_p\|_{\mathscr{X}_s}.
\enn
By the Parseval identity (\ref{A.5}) and the definitions of $M_1, M_2$ in Lemma \ref{thm4.3}, we get
\begin{align*}
    \int_{0}^{\ify}e^{-2s_1t}\|\mathscr{L}^{-1}(\bm\om-\bm\om_p)\|_{\mathscr{X}_s}^2dt
    ={}&\frac{1}{2\pi}\int_{-\ify}^{\ify}\|\bm\om-\bm\om_p\|_{\mathscr{X}_s}^2ds_2\no\\
    \leq{}&\frac{1}{2\pi}\int_{-\ify}^{\ify}
    C^{-2}\eta^4\Big(\sum_{j=1}^{2}\Ga_j\frac{2e^{-2\sqrt{\vep_j\mu_j}\ov{L}_j}}
    {1-e^{-2\sqrt{\vep_j\mu_j}\ov{L}_j}}\Big)^2\|\bm\om_p\|_{\mathscr{X}_s}^2ds_2.
\end{align*}
This implies that
\begin{equation}\label{4.26-}
\begin{aligned}
  &\int_0^T(\|\bm E-\wit{\bm E}\|_{H(\curl,\Om_h)}^2+\|\bm u-\wit{\bm u}\|_{H^1(\Om)^3}^2)dt\\
  \leq{}&e^{2s_1T}\int_{0}^{\ify}e^{-2s_1t}(\|\bm E-\wit{\bm E}\|_{H(\curl,\Om_h)}^2+\|\bm u-\wit{\bm u}\|_{H^1(\Om)^3}^2)dt\\
  ={}&e^{2s_1T} \int_{0}^{\ify}e^{-2s_1t}\|\mathscr{L}^{-1}(\bm\om-\bm\om_p)\|_{\mathscr{X}_s}^2dt\\
  \leq{}&\frac{\eta^4e^{2s_1T}}{\pi}\int_{0}^{\ify}C^{-2}\Big(\sum_{j=1}^{2}\Ga_j\frac{2e^{-2\sqrt{\vep_j\mu_j}\ov{L}_j}}
   {1-e^{-2\sqrt{\vep_j\mu_j}\ov{L}_j}}\Big)^2\|\bm\om_p\|_{\mathscr{X}_s}^2ds_2.
\end{aligned}
\end{equation}
Since $s_1>0$ is arbitrarily fixed, recalling the definitions of $C$ in (\ref{coercivity}) 
and $\Ga_j$ in (\ref{4.24''}), there exists a sufficiently large positive constant $M$, such that
\begin{align}\label{4.26}
C^{-2}\Ga_j^2, C^{-2}\Ga_1\Ga_2\leq\g_1|s|^{8},
\end{align}
when $s_2\geq M$, where $\g_1$ is a constant independence of $s_2$. 
On the other hand, it's clear that
\begin{align}\label{4.27}
 C^{-2}\Ga_j^2, C^{-2}\Ga_1\Ga_2\leq\g_2,
\end{align}
when $0\leq s_2\leq M$,  where $\g_2$ is a constant independence of $s_2$.
Thus the last inequality in (\ref{4.26-}) becomes
\be\label{4.26+}
&\quad\int_{0}^{\ify}C^{-2}\Big(\sum_{j=1}^{2}\Ga_j\frac{2e^{-2\sqrt{\vep_j\mu_j}\ov{L}_j}}
{1-e^{-2\sqrt{\vep_j\mu_j}\ov{L}_j}}\Big)^2\|\bm\om_p\|_{\mathscr{X}_s}^2ds_2\\
&\leq\Big(\sum_{j=1}^{2}\frac{2e^{-2\sqrt{\vep_j\mu_j}\ov{L}_j}}
{1-e^{-2\sqrt{\vep_j\mu_j}\ov{L}_j}}\Big)^2
\left(\int_{0}^{M}\g_2\|\bm\om_p\|_{\mathscr{X}_s}^2ds_2
+\int_{M}^{\ify}\g_1\|s^4\bm\om_p\|_{\mathscr{X}_s}^2ds_2\right).
\en
Now, only the right-hand integral in (\ref{4.26+}) remains to be estimated. 
Combining Lemma \ref{thm4.1} with Parseval identity (\ref{A.5}) and the assumptions 
(\ref{assumption})-(\ref{assumption1}) yields
\begin{equation*}
\begin{aligned}
  &\int_{0}^{M}\g_2\|\bm\om_p\|_{\mathscr{X}_s}^2ds_2+\int_{M}^{\ify}\g_1\|s^4\bm\om_p\|_{\mathscr{X}_s}^2ds_2\\
  \leq{}&(1+s_1^{-1}\sig_0)^2
    \Big(\int_{0}^{M}\g_2\big[\frac{1+2s_1^2}{s_1^4\min\{1,s_1^2\}}
    \|\ch{\bm J}\|_{L^2(\Om_h)^3}^2+s_1^{-2}\| s\ch{\bm J}\|_{L^2(\Om_h)^3}^2\big]ds_2\\
    &+\int_{M}^{\ify}\g_1\big[\frac{1+2s_1^2}{s_1^4\min\{1,s_1^2\}}
    \| s^4\ch{\bm J}\|_{L^2(\Om_h)^3}^2+s_1^{-2}\| s^5\ch{\bm J}\|_{L^2(\Om_h)^3}^2\big]ds_2\Big)\\
  \leq{}&\frac{1+2s_1^2}{s_1^4\min\{1,s_1^2\}}(\g_1+\g_2)(1+s_1^{-1}\sig_0)^2
  \int_{0}^{\ify}\sum_{l=0}^{5}\| s^l\ch{\bm J}\|_{L^2(\Om_h)^3}^2ds_2\\
  ={}&\pi\frac{1+2s_1^2}{s_1^4\min\{1,s_1^2\}}(\g_1+\g_2)(1+s_1^{-1}\sig_0)^2
  \int_{0}^{\ify}\sum_{l=0}^{5}\| \pa_t^l\bm J\|_{L^2(\Om_h)^3}^2dt.
\end{aligned}
\end{equation*}
By this inequality and (\ref{4.26-}), (\ref{4.26+}) the required estimate (\ref{result}) follows easily
on taking $s_1=T^{-1}$ and using the assumption (\ref{assumption1}) again, where integer $m\geq 1$
should be chosen small enough to ensure the rapid convergence (thus we need to take $m = 1$)
noting the definition of $\ov{L}_j=\sig_jL_j/(m+1)$. The proof is thus complete.

\end{proof}

\begin{remark}\label{re5} {\rm
Theorem \ref{convergence} implies that, for large $T$ the exponential convergence of the PML method
can be achieved by enlarging the thickness $L_j$ or the PML absorbing parameter $\sigma_j$
which increases as $\ln T$.
}
\end{remark}

\section{Conclusions}\label{sec6}

In this paper, the scattering of a time-dependent electromagnetic wave by an an elastic body
immersed in the lower half-space of a two-layered background medium is studied.
The well-posedness and stability estimate is verified by using the Laplace transform, 
the variational method and the energy method. In addition, we propose an effective PML method
 to solve this interaction problem, based on a real coordinate stretching technique associated 
 with $[\Rt(s)]^{-1}$ in the frequency domain, where $s$ is the Laplace transform variable.
The well-posedness and stability of the truncated PML problem are proved by using the Laplace transform
and energy method. At last, through the error estimate between the EtM operators of the original problem
and the  EtM operators for the PML problem, we establish the exponential convergence depending on the 
thickness and parameters of the PML layers.

In practical computation, the PML medium must be truncated along the lateral direction which
may be achieved by constructing the rectangular or cylindrical PML. Further, the idea of real 
coordinate stretching could be extended to other time-dependent scattering problems, 
such as diffraction gratings, elastic rough surface scattering problems. 
We hope to report such results in the future.

\begin{appendix}
\renewcommand{\theequation}{\thesection.\arabic{equation}}  %更改公式编号为A.1, A.2...

\section{Laplace transform}\label{ap1}
For each $s\in\Ct_+$, the Laplace transform of the vector field $\bm{u}(t)$ is defined as:
\ben
\ch{\bm{u}}(s)=\mathscr{L}(\bm{u})(s)=\int_0^{\ify}e^{-st}\bm{u}(t)dt.
\enn
The Fourier transform of $\phi(\wit{x},x_3)$ is normalized as follows:
\ben
\wih{\phi}(\xi,x_3)=\mathscr{F}(\phi)(\xi,x_3)
=\frac{1}{2\pi}\int_{\R^2}e^{-i\wit{x}\cdot\xi}\phi(\wit{x},x_3)d\wit{x},\;\;\xi\in\R^2
\enn
and the inverse Fourier transform of $\wih{\phi}(\xi)$ is
\ben
\phi(\wit{x},x_3)=\mathscr{F}^{-1}(\wih{\phi})(\wit{x},x_3)
=\frac{1}{2\pi}\int_{\R^2}e^{i\wit{x}\cdot\xi}\wih{\phi}(\xi,x_3)d\xi.
\enn
Some related properties on the Laplace transform and its inversion are summarized as
\begin{align}\label{A.1}
    &\mathscr{L}(\frac{d\bm{u}}{dt})(s)=s\mathscr{L}(\bm{u})(s)-\bm{u}(0),\\ \label{A.2}
    &\mathscr{L}(\frac{d^2\bm{u}}{dt^2})(s)
    =s^2\mathscr{L}(\bm{u})(s)-s\bm{u}(0)-\frac{d\bm{u}}{dt}(0),\\ \label{A.3}
    &\mathscr{L}\Big(\int_0^t\bm{u}(\tau)d\tau\Big)(s)=s^{-1}\mathscr{L}(\bm{u})(s),
\end{align}
which can be easily verified from the integration by parts.

Next, we present the relation between Laplace and Fourier transform. According to the
definition on the Fourier transform, it holds
\ben
\sqrt{2\pi}\mathscr{F}(\bm{u}(\cdot)e^{-s_1\cdot})=\int_{-\ify}^{+\ify}\bm{u}(t)e^{-s_1t}e^{-is_2t}dt
=\int_{0}^{\ify}\bm{u}(t)e^{-(s_1+is_2)t}dt=\mathscr{L}(\bm{u})(s_1+is_2).
\enn
We can verify from the formula of the inverse Fourier transform that
\ben
\bm{u}(t)e^{-s_1t}=\frac{1}{\sqrt{2\pi}}\mathscr{F}^{-1}\{\mathscr{F}(\bm{u}(\cdot)e^{-s_1\cdot})\}
=\frac{1}{\sqrt{2\pi}}\mathscr{F}^{-1}\Big(\mathscr{L}(\bm{u}(s_1+is_2))\Big),
\enn
which implies that
\be\label{A.4}
    \bm{u}(t)=\frac{1}{\sqrt{2\pi}}\mathscr{F}^{-1}\Big(e^{s_1t}\mathscr{L}(\bm{u}(s_1+is_2))\Big).
\en
where $\mathscr{F}^{-1}$ denotes the inverse Fourier transform with respect to $s_2$.

By (\ref{A.4}), the Plancherel or Parseval identity for the Laplace transform can be obtained
(see \cite[(2.46)]{Cohen2007}).
\begin{lem}[Parseval identity]\label{lemA.1}
    If $\ch{\bm{u}}=\mathscr{L}(\bm{u})$ and $\ch{\bm{v}}=\mathscr{L}(\bm{v})$, then
    \be
        \frac{1}{2\pi}\int_{-\ify}^{\ify}\ch{\bm{u}}(s)\cdot\ch{\bm{v}}(s)ds_2
        =\int_0^{\ify}e^{-2s_1t}\bm{u}(t)\cdot\bm{v}(t)dt.\label{A.5}
    \en
    for all $s_1>\la$ where $\la$ is the abscissa of convergence for the Laplace transform of $\bm{u}$ and $\bm{v}$.
\end{lem}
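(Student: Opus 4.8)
The plan is to reduce the claimed identity to the classical Plancherel theorem for the one-variable Fourier transform $\mathscr{F}$ appearing in the Laplace--Fourier relation \eqref{A.4}, normalized with the factor $(2\pi)^{-1/2}$ so that it carries no extra constant. Fix $s_1>\la$ and introduce the weighted, causal functions $g(t):=e^{-s_1 t}\bm u(t)$ and $h(t):=e^{-s_1 t}\bm v(t)$, both extended by $\bm 0$ for $t<0$ since $\bm u,\bm v$ are supported in $[0,\ify)$. The hypothesis $s_1>\la$, where $\la$ is the abscissa of convergence, is precisely what guarantees that $g$ and $h$ are square-integrable in $t$ (as Hilbert-space-valued functions), so that their Fourier transforms and the Plancherel identity are legitimate. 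With this notation the relation established in the appendix reads $\mathscr{F}(g)(s_2)=\frac{1}{\sqrt{2\pi}}\ch{\bm u}(s_1+is_2)$ and $\mathscr{F}(h)(s_2)=\frac{1}{\sqrt{2\pi}}\ch{\bm v}(s_1+is_2)$.

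First I would invoke the vector-valued Plancherel theorem for this transform,
\[ \int_{-\ify}^{\ify}\mathscr{F}(g)(s_2)\cdot\ov{\mathscr{F}(h)(s_2)}\,ds_2=\int_{-\ify}^{\ify}g(t)\cdot\ov{h(t)}\,dt. \]
Substituting the two transform relations into the left-hand side produces the prefactor $(2\pi)^{-1}$ together with the integrand $\ch{\bm u}(s)\cdot\ov{\ch{\bm v}(s)}$, while on the right-hand side, using that $\bm u,\bm v$ are real-valued physical fields and $s_1\in\R$, one computes $g(t)\cdot\ov{h(t)}=e^{-2s_1 t}\bm u(t)\cdot\bm v(t)$, with the lower limit collapsing to $0$ by causality. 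This yields exactly \eqref{A.5}, the Hermitian pairing on the $s$-side being understood so that the bar on $\mathscr{F}(h)$ is absorbed by the reality of $\bm v$.

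The points requiring care, rather than being genuinely hard, are threefold: (i) verifying $g,h\in L^2(\R)$ uniformly enough in the spatial variable that Plancherel may be applied in $s_2$ and the result integrated over $\Om$ (respectively $\Om_h$) by Fubini--Tonelli — this is exactly where $s_1>\la$ enters; (ii) the bookkeeping of complex conjugation, since the identity is meant with the Hermitian pairing, and the conjugate combines with the reality of $\bm u,\bm v$ and $s_1$ to remove all bars on the time side; and (iii) confirming that the symmetric $(2\pi)^{-1/2}$ normalization is the one for which Plancherel introduces no constant, so the stated prefactor $\frac{1}{2\pi}$ is correct. The main, and only mild, obstacle is (i): establishing square-integrability of the weighted fields and justifying the interchange of the $t$- and $x$-integrations; once that is in place, everything else follows by direct substitution.
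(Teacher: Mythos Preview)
Your proposal is correct and is precisely the approach the paper indicates: the paper does not spell out a proof but simply remarks that ``By (\ref{A.4}), the Plancherel or Parseval identity for the Laplace transform can be obtained (see \cite[(2.46)]{Cohen2007})'', and your argument is exactly the reduction via (\ref{A.4}) to the classical Plancherel theorem for $\mathscr{F}$ that this sentence points to. Your careful bookkeeping of the conjugation and the $(2\pi)^{-1/2}$ normalization fills in the details the paper omits.
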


\begin{lem}{\rm (\cite[Theorem 43.1]{treves1975})}\label{lemA.2}
    Let $\ch{\bm{\om}}(s)$ denotes a holomorphic function in the half plane $s_1>\sig_0$,
    valued in the Banach space $\E$. The following statements are equivalent:
    \begin{enumerate}[1.]
      \item there is a distribution $\om\in\mathcal{D}_+^{'}(\E)$ whose Laplace transform is equal to
      $\ch{\bm{\om}}(s)$, where $\mathcal{D}_+^{'}(\E)$ is the space of distributions on the real line
      which vanish identically in the open negative half line;
      \item there is a $\sig_1$ with $\sig_0\leq\sig_1<\ify$ and an integer $m\geq0$ such that for all
      complex numbers $s$ with $s_1>\sig_1$ , it holds that $\|\ch{\bm{\om}}(s)\|_{\E} \lesssim(1+|s|)^m$.
    \end{enumerate}
\end{lem}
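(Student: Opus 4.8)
The plan is to establish this as a vector-valued Paley--Wiener--Schwartz theorem for the Laplace transform, proving the two implications separately. The implication $1\Rightarrow 2$ is the routine direction. If $\om\in\mathcal{D}_+^{'}(\E)$ has Laplace transform $\ch{\bm\om}(s)=\langle\om,e^{-st}\rangle$ convergent for $s_1>\sig_0$, then for any fixed $\sig_1>\sig_0$ the weighted distribution $e^{-\sig_1 t}\om$ is a tempered $\E$-valued distribution, and $\ch{\bm\om}(\sig_1+is_2)$ is, up to the normalizing constant, its Fourier transform in $s_2$. Invoking the structure theorem for tempered distributions---writing $e^{-\sig_1 t}\om$ as a finite sum of distributional derivatives of continuous $\E$-valued functions of polynomial growth---and using that the Fourier transform sends derivatives to multiplication by $s_2$ and preserves polynomial growth, one reads off a bound $\|\ch{\bm\om}(\sig_1+is_2)\|_\E\les(1+|s_2|)^m$ on a single vertical line. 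Holomorphy together with a maximum-modulus argument on vertical strips then upgrades this to the uniform bound $\|\ch{\bm\om}(s)\|_\E\les(1+|s|)^m$ throughout $s_1>\sig_1$.

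For the harder implication $2\Rightarrow 1$, the plan is to reconstruct $\om$ by a regularized Bromwich integral. The polynomial bound prevents $\int e^{st}\ch{\bm\om}(s)\,ds$ from converging, so I would pick an integer $k>m+1$, fix a vertical line $\Rt(s)=\sig_1>\max\{\sig_0,0\}$, and define the $\E$-valued function
\ben
    \Om(t):=\frac{1}{2\pi i}\int_{\sig_1-i\ify}^{\sig_1+i\ify}e^{st}\,\frac{\ch{\bm\om}(s)}{s^k}\,ds.
\enn
The choice $k>m+1$ makes the integrand norm-integrable along the line, so that $\Om$ is a well-defined continuous $\E$-valued function; the candidate distribution is then $\om:=\pa_t^k\Om$ in the sense of $\E$-valued distributions.

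Three points must then be verified, and the support statement is where the real work lies. First, independence of the contour: since $\ch{\bm\om}(s)/s^k$ is holomorphic in $s_1>\sig_0$, with the spurious pole at $s=0$ lying to the left of every admissible line, Cauchy's theorem applied on tall rectangles---whose horizontal sides give vanishing contributions by the polynomial bound---shows that $\Om(t)$ is independent of $\sig_1$. Second, the support property $\om\in\mathcal{D}_+^{'}(\E)$: for $t<0$ I would shift the contour to $\Rt(s)=R$ and let $R\ra+\ify$, whereupon $|e^{st}|=e^{Rt}\ra 0$ combined with the integrable decay of $\ch{\bm\om}(s)/s^k$ forces $\Om(t)=0$, so $\om$ vanishes on the open negative half-line. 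Third, that $\mathscr{L}(\om)=\ch{\bm\om}$: since contour shifting also gives $\Om(0)=\pa_t\Om(0)=\cdots=\pa_t^{k-1}\Om(0)=0$, the iterated differentiation identity (cf. \eqref{A.1}--\eqref{A.2}) yields $\mathscr{L}(\pa_t^k\Om)(s)=s^k\mathscr{L}(\Om)(s)=s^k\cdot s^{-k}\ch{\bm\om}(s)=\ch{\bm\om}(s)$. The main obstacle I anticipate is the rigorous justification of these contour shifts in the Banach-space-valued setting: making the horizontal tail contributions vanish uniformly and controlling the limit $R\ra+\ify$ both require the polynomial bound of hypothesis~2 to hold on the \emph{full} right half-plane rather than on a single line, which is precisely what the maximum-modulus step used in the first implication, applied in the present direction, supplies.
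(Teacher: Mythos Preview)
The paper does not prove this lemma at all: it is stated in the appendix as a direct citation of \cite[Theorem 43.1]{treves1975}, with no argument given. So there is no ``paper's own proof'' to compare against --- the authors simply quote the result as a black box used in the proof of Theorem~\ref{thm3.4}.

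Your sketch follows the classical Paley--Wiener--Schwartz approach and is essentially sound, but one step is slightly mis-argued. In the $2\Rightarrow 1$ direction you claim that contour shifting gives $\Om(0)=\pa_t\Om(0)=\cdots=\pa_t^{k-1}\Om(0)=0$, and then invoke \eqref{A.1}--\eqref{A.2}. The higher derivatives $\pa_t^j\Om$ for $j$ close to $k-1$ need not be classically defined (the regularized integrand $s^{j-k}\ch{\bm\om}(s)$ may fail to be integrable), so you cannot appeal to pointwise vanishing at $t=0$. The cleaner route is to work entirely distributionally: since $\Om$ is continuous and supported in $[0,\ify)$, its distributional derivative $\om=\pa_t^k\Om$ lies in $\mathcal{D}_+^{'}(\E)$, and for distributions supported in $[0,\ify)$ the identity $\mathscr{L}(\pa_t\phi)=s\mathscr{L}(\phi)$ holds without boundary terms. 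This yields $\mathscr{L}(\om)=s^k\mathscr{L}(\Om)=\ch{\bm\om}$ directly. Apart from this, your outline matches the standard proof one finds in Tr\`eves.
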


\section{Functional spaces}\label{ap2}
In this subsection, we give a brief summary of some fundamental functional spaces. For a bounded
Lipschitz domain $D\in\R^3$ with unit outward normal vector $\bm\nu$ on its boundary $\Sig$, we set
\ben
  H(\curl,D):=\{\bm \om\in L^2(D)^3:\ \na\ti\bm \om\in L^2(D)^3\},
\enn
which is clearly a Hilbert space equipped with the norm
\ben
  \V\bm \om\V_{H(\curl,D)}=\left(\V\bm \om\V^2_{L^2(D)^3}+\V\na\ti\bm \om\V^2_{L^2(D)^3}\right)^{1/2}.
\enn

From \cite{Buffa2002}, we define the bounded surjective trace operator $\g$,
 tangential trace operator $\g_t$ and tangential projection operator $\g_T$ by
\begin{align*}
\g&:\;H^1(D)\rightarrow H^{1/2}(\Sigma),\quad\g\varphi=\varphi\quad\on\;\;\Sigma,\\
\g_t&:H^1(D)^3\ra L_t^2(\Sig)^3,\quad\gamma_t\bm \om=\bm \om\ti\bm\nu\quad\on\;\Sigma,\\
\g_T&:H^1(D)^3\ra L_t^2(\Sig)^3,\quad\gamma_T\bm \om=\bm\nu\ti(\bm\om\ti\bm\nu)\quad\on\;\Sigma,
\end{align*}
where $L_t^2(\Sig)^3:=\{\bm\om\in L^2(\Sig)^3:\;\bm\om\cdot\bm\nu=0\}$ and denote by
$\bm\om_{\Sigma}=\bm\nu\ti(\bm\om\ti\bm\nu)$ the tangential component of $\bm\om$ on $\Sigma$.
In fact, the range of $\g_t$ and $\g_T$
\begin{align*}
H_{ \|}^{1 / 2}(\Sig) :=\left\{\bm\xi \in L^2_{t}(\Sig)^3 : \bm\xi=\g_{T} \bm\om,\right.& \bm\om \in H^{1}(D)^3\}, \\
H_{\perp}^{1 / 2}(\Sig) :=\left\{\bm\xi \in L^2_{t}(\Sig)^3 : \bm\xi=\gamma_{t} \bm\om,\right.& \bm\om \in H^{1}(D)^3\},
\end{align*}
are dense in $L^2_t(\Sig)^3$, and $\g_t:H^1(D)^3\ra H_{\perp}^{1/2}(\Sig)$, $\g_T:
H^1(D)^3\ra H_{\parallel}^{1/2}(\Sig)$ are bounded and surjective operators. The dual spaces of 
$H_{\perp}^{1/2}(\Sig)$ and $H_{\parallel}^{1/2}(\Sig)$ with respect to the pivot space $L_t^2(\Sig)^3$
are denoted by $H_{\perp}^{-1/2}(\Sig)$ and $H_{\parallel}^{-1/2}(\Sig)$, respectively.
In this paper, we will also use the notion $\g_t\bm\phi\;(\text{or}\;\g_T\bm\phi)$ for the composite operator $\g_t\circ\g^{-1}\bm\phi\;(\text{or}\;\g_T\circ\g^{-1}\bm\phi)$.
According to \cite[Theorem 4.1]{Buffa2002}, the definitions of $\g_t$ and $\g_T$ can be extended into $H(\curl,D)$.
\begin{lem}\label{trace}
  \ben
    H^{-1/2}(\Dive,\Sig):=\left\{\bm\la\in H^{-1/2}_{\parallel}(\Sig):\;\Dive\;\bm\la\in H^{-1/2}(\Sig)\right\}
  \enn
  and
  \ben
    H^{-1/2}(\Curl,\Sig):=\left\{\bm\la\in H^{-1/2}_{\perp}(\Sig):\;\Curl\;\bm\la\in H^{-1/2}(\Sig)\right\}.
  \enn
  The operators $\g_t:H(\curl,D)\ra H^{-1/2}(\Dive,\Sig)$ and $\g_T:H(\curl,D)\ra H^{-1/2}(\Curl,\Sig)$
  are linear, continuous, and surjective. Moreover, the
  $L^2_t(\Sig)$-inner product can be extended to define a duality product $\langle\cdot,\cdot\rangle_{\Sig}$ between
  the spaces $H^{-1/2}(\Dive,\Sig)$ and $H^{-1/2}(\Curl,\Sig)$.
\end{lem}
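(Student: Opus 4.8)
The statement is the tangential trace theorem for $H(\curl,D)$ on a Lipschitz domain, and I would follow the strategy of Buffa--Costabel--Sheen \cite{Buffa2002}, building everything on the single integration-by-parts (Green--Stokes) identity for smooth fields $\bm u,\bm v\in C^\infty(\ov{D})^3$,
\ben
\int_D(\na\ti\bm u)\cdot\bm v\,dx-\int_D\bm u\cdot(\na\ti\bm v)\,dx=\langle\g_t\bm u,\g_T\bm v\rangle_{\Sig},
\enn
whose right-hand side is the $L^2_t(\Sig)^3$ pairing of $\g_t\bm u=\bm u\ti\bm\nu$ with $\g_T\bm v=\bm v_{\Sig}$. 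First I would extend $\g_t$ to $H(\curl,D)$ by duality: for $\bm u\in H(\curl,D)$ and boundary data $\bm\phi\in H^{1/2}_{\parallel}(\Sig)$, choose any lifting $\bm v\in H^1(D)^3$ with $\g_T\bm v=\bm\phi$ and define $\langle\g_t\bm u,\bm\phi\rangle_{\Sig}$ to be the left-hand side above. Independence of the lifting follows because two liftings differ by a field with vanishing tangential projection, on which the volume expression vanishes by density of $C_0^\infty(D)^3$; the Cauchy--Schwarz bound $|\langle\g_t\bm u,\bm\phi\rangle_{\Sig}|\le\|\bm u\|_{H(\curl,D)}\|\bm v\|_{H^1(D)^3}$ together with a bounded right inverse of the surjection $\g_T:H^1(D)^3\to H^{1/2}_{\parallel}(\Sig)$ shows $\g_t\bm u\in H^{-1/2}_{\parallel}(\Sig)$ and that $\g_t:H(\curl,D)\to H^{-1/2}_{\parallel}(\Sig)$ is linear and continuous. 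The symmetric construction (pairing against $\g_t\bm v\in H^{1/2}_{\perp}(\Sig)$) handles $\g_T$ and places $\g_T\bm u\in H^{-1/2}_{\perp}(\Sig)$.

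Second, I would sharpen the target space by computing the surface divergence of the trace. The governing identity (up to sign conventions) is $\Dive_{\Sig}(\g_t\bm u)=-\bm\nu\cdot(\na\ti\bm u)|_{\Sig}$, read weakly through the surface operators on $\Sig$. Since $\bm u\in H(\curl,D)$, the field $\na\ti\bm u\in L^2(D)^3$ is divergence free, hence lies in $H(\dive,D)$ and possesses a normal trace $\bm\nu\cdot(\na\ti\bm u)|_{\Sig}\in H^{-1/2}(\Sig)$. Thus $\Dive_{\Sig}(\g_t\bm u)\in H^{-1/2}(\Sig)$, so that $\g_t\bm u\in H^{-1/2}(\Dive,\Sig)$ with the associated continuity estimate; the dual computation $\Curl_{\Sig}(\g_T\bm u)=\bm\nu\cdot(\na\ti\bm u)|_{\Sig}$ places $\g_T\bm u\in H^{-1/2}(\Curl,\Sig)$.

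Third, I would obtain the duality pairing by passing to the limit in the Green identity. By density of $C^\infty(\ov{D})^3$ in $H(\curl,D)$, the identity extends to all $\bm u,\bm v\in H(\curl,D)$, and its right-hand side is bounded by $\|\bm u\|_{H(\curl,D)}\|\bm v\|_{H(\curl,D)}$; combined with surjectivity of the two traces this shows that the $L^2_t(\Sig)^3$ inner product extends to a continuous, non-degenerate duality product $\langle\cdot,\cdot\rangle_{\Sig}$ between $H^{-1/2}(\Dive,\Sig)$ and $H^{-1/2}(\Curl,\Sig)$.

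\textbf{Main obstacle.} The genuinely hard part is the range characterization together with surjectivity on a merely Lipschitz boundary, where $\bm\nu\in L^\infty(\Sig)$ only and the surface operators $\Dive_{\Sig},\Curl_{\Sig}$ and the spaces $H^{\pm1/2}_{\parallel}(\Sig),H^{\pm1/2}_{\perp}(\Sig)$ must be defined intrinsically on $\Sig$. To prove that $\g_t$ attains every $\bm\la\in H^{-1/2}(\Dive,\Sig)$, I would invoke the Hodge/Helmholtz decomposition of tangential fields on the Lipschitz manifold $\Sig$ from \cite[Theorem 4.1]{Buffa2002}, split $\bm\la$ into a surface-gradient part and a surface-curl part, and lift each piece by the tangential trace of the gradient, respectively the curl, of a scalar potential solving a Laplace-type problem in $D$; the Green identity then verifies that the resulting $H(\curl,D)$ field has tangential trace $\bm\la$. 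I would borrow this decomposition directly rather than reconstruct it, since once the trace spaces are correctly set up the remaining steps---well-definedness, continuity, and the duality formula---are routine consequences of the Green identity.
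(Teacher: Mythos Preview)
The paper does not supply its own proof of this lemma: it states the result and attributes it to Buffa--Costabel--Sheen \cite{Buffa2002} (specifically Theorem~4.1 there), referring the reader to that paper for the definitions of the surface operators and the argument. Your proposal follows precisely the strategy of \cite{Buffa2002}---extending $\g_t,\g_T$ by duality via the Green--Stokes identity, identifying $\Dive_{\Sig}(\g_t\bm u)$ through the normal trace of $\na\ti\bm u$, and invoking the tangential Hodge decomposition for surjectivity---so your approach is exactly the one the paper cites, and is correct.
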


We refer to \cite{Buffa2002} for the detailed definitions of the surface divergence and surface scalar curl operators
$\Dive$ and $\Curl$ in lemma\ref{trace}. In addition, the dual pair $H^{-1/2}(\Dive,\Sig)$ and $H^{-1/2}(\Curl,\Sig)$
satisfy the following vector integration by parts
\be\label{Green's}
  \int_{D}(\bm\om\cdot\na\ti\bm v-\na\ti\bm\om\cdot\bm v)dx= \langle\g_t\bm\om,\g_T\bm v\rangle_{\Sig}
  \quad\quad\forall\;\bm\om,\bm v\in H(\curl,D).
\en

For a finite strip domain $\Om_h$, the definition of Sobolev space $H(\curl,\Om_h)$ can be found in \cite{GL2017,LWZ2011}.
Denote by $C_{\wit x}^{\ify}$ the linear space of infinitely differentiable functions with compact support with respect
to the variable $\wit x$ on $\Om_h$. According to the dense argument of $C_{\wit x}^{\ify}(\Om_h)^3$ in $H(\curl,\Om_h)$
(see \cite[Lemma 2.2]{LWZ2011}), one may only need to consider the proof in $C_{\wit x}^{\ify}(\Om_h)^3$ and then extend them
by limiting argument to more general functions in $H(\curl,\Om_h)$. Therefore, the boundary integrals only on $\Ga_{h_j}$
and $\Ga$ need to be considered when formulating the variational problems in $\Om_h$.

For a smooth vector $\bm\om = (\om_1, \om_2, \om_3)^{\top}$ defined on $\Ga_{h_j}$, denote by 
\ben
	\dive_{\Ga_{h_j}}\bm\om = \pa_{x_1} \om_1 + \pa_{x_2} \om_2,\quad \curl_{\Ga_{h_j}} \bm\om = \pa_{x_1} \om_2 - \pa_{x_2} \om_1
\enn
the surface divergence and the surface scalar curl, respectively.
Now we introduce two vector trace spaces on the planar surface:
\ben
&H^{-1/2}(\curl,\Ga_{h_j}):=\big\{\bm\om\in H^{-1/2}(\Ga_{h_j})^3:\ \om_3=0,
\ \curl_{\Ga_{h_j}}\bm\om\in H^{-1/2}(\Ga_{h_j})\big\},\\
&H^{-1/2}(\dive,\Ga_{h_j}):=\big\{\bm\om\in H^{-1/2}(\Ga_{h_j})^3:\ \om_3=0,
\ \dive_{\Ga_{h_j}}\bm\om\in H^{-1/2}(\Ga_{h_j})\big\},
\enn
which are equipped with the norm defined by the Fourier transform:
\ben
&\|\bm\om\|_{H^{-1/2}(\curl,\Ga_{h_j})}=\Big(\int_{\R^2}(1+|\xi|^2)^{-1/2}(|\wih\om_1|^2
+|\wih\om_2|^2+|\xi_1\wih\om_2-\xi_2\wih\om_1|^2)d\xi\Big)^{1/2},\\
&\|\bm\om\|_{H^{-1/2}(\dive,\Ga_{h_j})}=\Big(\int_{\R^2}(1+|\xi|^2)^{-1/2}(|\wih\om_1|^2
+|\wih\om_2|^2+|\xi_1\wih\om_1+\xi_2\wih\om_2|^2)d\xi\Big)^{1/2}.
\enn
The following two lemmas about the duality between the spaces $H^{-1/2}(\curl,\Ga_{h_j})$ and
$H^{-1/2}(\dive,\Ga_{h_j})$ and the trace regularity in $H(\curl,\Om_h)$  can be found the proofs
in \cite[Lemma 2.3, Lemma 2.4]{LWZ2011}.

\begin{lem}
	The spaces $H^{-1/2}(\dive,\Ga_{h_j})$ and $H^{-1/2}(\curl,\Ga_{h_j})$ are mutually adjoint
	with respect to the scalar product in $L^2(\Ga_{h_j})^3$ defined by
	\ben
	\langle\bm\om,\bm v\rangle_{\Ga_{h_j}}=\int_{\Ga_{h_j}}\bm\om\cdot\ov{\bm v}d\g
	=\int_{\R^2}(\wih \om_1\ov{\wih v}_1+\wih \om_2\ov{\wih v}_2)d\xi.
	\enn
\end{lem}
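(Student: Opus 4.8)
The plan is to pass to the Fourier side and reduce the whole statement to a frequencywise, two–dimensional computation. By the Parseval identity for the Fourier transform on $\Ga_{h_j}\cong\R^2$ (with the normalisation of Appendix~\ref{ap1}), for tangential fields $\bm\om=(\om_1,\om_2,0)^{\top}$ and $\bm v=(v_1,v_2,0)^{\top}$ the two norms and the bilinear pairing are all written as integrals over $\xi\in\R^2$ of algebraic expressions in $\wih{\bm\om}(\xi),\wih{\bm v}(\xi)\in\Ct^2$; it therefore suffices to diagonalise, at each fixed $\xi\neq0$, the quadratic forms defining the norms by a (real) orthonormal change of basis in $\Ct^2$, the set $\{\xi=0\}$ being null and harmless.

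First I would introduce $\xi^{\perp}=(-\xi_2,\xi_1)$ and expand $\wih{\bm\om}(\xi)=a(\xi)\,\xi/|\xi|+b(\xi)\,\xi^{\perp}/|\xi|$, so $a=(\xi_1\wih\om_1+\xi_2\wih\om_2)/|\xi|$ is the divergence symbol and $b=(\xi_1\wih\om_2-\xi_2\wih\om_1)/|\xi|$ the curl symbol; since $\{\xi/|\xi|,\xi^{\perp}/|\xi|\}$ is a real orthonormal basis one has $|\wih\om_1|^2+|\wih\om_2|^2=|a|^2+|b|^2$, $|\xi_1\wih\om_1+\xi_2\wih\om_2|^2=|\xi|^2|a|^2$, $|\xi_1\wih\om_2-\xi_2\wih\om_1|^2=|\xi|^2|b|^2$, and $\wih\om_1\ov{\wih v}_1+\wih\om_2\ov{\wih v}_2=a_\om\ov{a_v}+b_\om\ov{b_v}$. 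Substituting into the definitions in Appendix~\ref{ap2} yields
\[
\|\bm\om\|_{H^{-1/2}(\curl,\Ga_{h_j})}^2=\int_{\R^2}\Big((1+|\xi|^2)^{-1/2}|a_\om|^2+(1+|\xi|^2)^{1/2}|b_\om|^2\Big)d\xi,
\]
\[
\|\bm v\|_{H^{-1/2}(\dive,\Ga_{h_j})}^2=\int_{\R^2}\Big((1+|\xi|^2)^{1/2}|a_v|^2+(1+|\xi|^2)^{-1/2}|b_v|^2\Big)d\xi,
\]
and $\langle\bm\om,\bm v\rangle_{\Ga_{h_j}}=\int_{\R^2}(a_\om\ov{a_v}+b_\om\ov{b_v})d\xi$. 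Thus, channel by channel, the two norms are weighted $L^2(\R^2)$ norms with \emph{reciprocal} weights $(1+|\xi|^2)^{\pm1/2}$ (the divergence symbol and the curl symbol carrying the opposite powers), while the pairing is the plain $L^2$ pairing of the symbols.

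The conclusion is then the standard duality of reciprocally weighted $L^2$ spaces. Boundedness of the pairing on $H^{-1/2}(\curl,\Ga_{h_j})\times H^{-1/2}(\dive,\Ga_{h_j})$ follows from Cauchy–Schwarz applied in each channel with the factorisation $(1+|\xi|^2)^{\pm1/4}$, giving $|\langle\bm\om,\bm v\rangle_{\Ga_{h_j}}|\le\|\bm\om\|_{H^{-1/2}(\curl,\Ga_{h_j})}\|\bm v\|_{H^{-1/2}(\dive,\Ga_{h_j})}$. That each space is isometrically the full dual of the other is obtained by exhibiting the extremiser: for given $\bm v$ take $\bm\om$ with $a_\om=(1+|\xi|^2)^{1/2}\ov{a_v}$ and $b_\om=(1+|\xi|^2)^{-1/2}\ov{b_v}$; a direct check gives $\bm\om\in H^{-1/2}(\curl,\Ga_{h_j})$ with $\|\bm\om\|_{H^{-1/2}(\curl,\Ga_{h_j})}=\|\bm v\|_{H^{-1/2}(\dive,\Ga_{h_j})}$ and $\langle\bm\om,\bm v\rangle_{\Ga_{h_j}}=\|\bm v\|_{H^{-1/2}(\dive,\Ga_{h_j})}^2$, and surjectivity of $\bm v\mapsto\langle\cdot,\bm v\rangle_{\Ga_{h_j}}$ onto the dual then follows from the Riesz representation theorem in the Hilbert space $H^{-1/2}(\curl,\Ga_{h_j})$ (whose inner product is the polarisation of the above norm) after matching the reciprocal weight; the symmetric statement is identical by interchanging the two formulas. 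The only point needing a line of care — and the (mild) obstacle — is to justify that the Fourier-side definitions of the norms and of $\langle\cdot,\cdot\rangle_{\Ga_{h_j}}$ really extend the $L^2(\Ga_{h_j})^3$ inner product: one checks the identity on the dense subspace of smooth tangential fields with compact support in $\wit{x}$ and extends by density, which is exactly the place where the uniform $(1+|\xi|^2)^{-1/2}$ prefactor shared by both norms enters.
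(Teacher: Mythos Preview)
The paper does not actually prove this lemma; it merely cites \cite[Lemma~2.3]{LWZ2011} for the argument. Your Fourier-side diagonalisation via the orthonormal frame $\{\xi/|\xi|,\xi^{\perp}/|\xi|\}$ is exactly the right idea and is, in fact, the standard way one proves such a statement: it reduces both norms to weighted $L^2$ spaces with reciprocal weights $(1+|\xi|^2)^{\pm1/2}$ on the divergence and curl channels, after which the duality is the elementary duality of reciprocally weighted $L^2$ spaces.

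One small slip: in your extremiser you should take $a_\om=(1+|\xi|^2)^{1/2}a_v$ and $b_\om=(1+|\xi|^2)^{-1/2}b_v$ \emph{without} complex conjugation. With the conjugates as you wrote them, the norm identity $\|\bm\om\|_{H^{-1/2}(\curl,\Ga_{h_j})}=\|\bm v\|_{H^{-1/2}(\dive,\Ga_{h_j})}$ still holds (since $|\ov{a_v}|=|a_v|$), but the pairing gives $\int(1+|\xi|^2)^{1/2}(\ov{a_v})^2+(1+|\xi|^2)^{-1/2}(\ov{b_v})^2\,d\xi$ rather than $\|\bm v\|_{H^{-1/2}(\dive,\Ga_{h_j})}^2$. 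Dropping the bars fixes this immediately; the rest of the argument is sound.
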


\begin{lem}\label{lemB.2}
	Let $\eta=\max\{\sqrt{1+(h_1-h_2)^{-1}},\sqrt{2}\}$. We have the estimate
	\ben
	\|\bm\om\|_{H^{-1/2}(\curl,\Ga_{h_j})}\leq \eta\|\bm\om\|_{H(\curl,\Om)},\ \forall\;\bm\om\in H(\curl,\Om_h).
	\enn
\end{lem}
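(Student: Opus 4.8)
The statement is a trace inequality, so the plan is to pass to the tangential Fourier variables and reduce everything to a one-dimensional trace estimate along the $x_3$-direction, tracking the constant carefully. First I would invoke the density of $C_{\wit x}^{\ify}(\Om_h)^3$ in $H(\curl,\Om_h)$ recalled in this appendix, so that it suffices to prove the bound for a smooth $\bm\om$ with compact support in $\wit x$, the general case following by a limiting argument. For such $\bm\om$ I apply the Fourier transform in $\wit x=(x_1,x_2)$ and Plancherel's identity to rewrite $\|\bm\om\|_{L^2(\Om_h)^3}^2$ and $\|\na\ti\bm\om\|_{L^2(\Om_h)^3}^2$ as integrals in $\xi$ of the one-dimensional (in $x_3$) $L^2$-norms of $\wih{\bm\om}(\xi,\cdot)$ and $\wih{\na\ti\bm\om}(\xi,\cdot)$. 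Since $h_1>\sup f$ and $\Om$ lies above $\Ga_{h_2}$, the cross-sections of $\Om_h$ adjacent to each planar surface $\Ga_{h_j}$ are the full plane, so this tangential Fourier reduction is available on the flat slab next to $\Ga_{h_j}$, exactly as in the clean-slab analysis of \cite{LWZ2011}.

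The core is the elementary one-dimensional identity: for $g\in H^1(h_2,h_1)$, averaging $|g(h_j)|^2=|g(t)|^2+\int_t^{h_j}2\Rt(\ov g\,g')\,ds$ over $t\in(h_2,h_1)$ gives $|g(h_j)|^2\le (h_1-h_2)^{-1}\int_{h_2}^{h_1}|g|^2\,ds+2\int_{h_2}^{h_1}|g|\,|g'|\,ds$. I would apply this with $g=\wih\om_1$ and $g=\wih\om_2$ for the first two terms of the $H^{-1/2}(\curl,\Ga_{h_j})$ norm, and with $g=\wih{(\na\ti\bm\om)_3}=i(\xi_1\wih\om_2-\xi_2\wih\om_1)$ for the surface-curl term. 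The algebraic inputs are the Fourier forms of the $x_3$-derivatives coming from the definition of the curl, namely $\pa_{x_3}\wih\om_1=\wih{(\na\ti\bm\om)_2}+i\xi_1\wih\om_3$ and $\pa_{x_3}\wih\om_2=-\,\wih{(\na\ti\bm\om)_1}+i\xi_2\wih\om_3$, together with the identity $\pa_{x_3}\wih{(\na\ti\bm\om)_3}=-i\xi_1\wih{(\na\ti\bm\om)_1}-i\xi_2\wih{(\na\ti\bm\om)_2}$, which is just the Fourier form of $\na\cdot(\na\ti\bm\om)=0$ and causes the $\wih\om_3$-contributions to cancel. Consequently every derivative term carries at most a first power of $|\xi|$ multiplying either $\wih\om_3$ or a component of $\wih{\na\ti\bm\om}$.

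The weight $(1+|\xi|^2)^{-1/2}$ in the trace norm is then exactly what is needed: since $(1+|\xi|^2)^{-1/2}|\xi_k|\le 1$, the factor $|\xi|$ produced by the $\wih\om_3$-terms is absorbed, leaving products of $L^2(h_2,h_1)$-norms of $\wih\om_k$, $\wih\om_3$ and $\wih{\na\ti\bm\om}$. It is essential to keep these contributions in the product form $|g|\,|g'|$ rather than squaring the derivative, because $(1+|\xi|^2)^{-1/2}|\xi|^2$ is unbounded and squaring would destroy the estimate. After integrating in $\xi$ and applying Plancherel again, the zeroth-order pieces (with the prefactor $(h_1-h_2)^{-1}$, including the one from the surface-curl term) are each bounded by $1+(h_1-h_2)^{-1}$ times the relevant $L^2$-norm, while the product/derivative pieces are split by weighted Young inequalities so as to be controlled with constant at most $2$. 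Taking the worst of the two competing regimes yields $\|\bm\om\|_{H^{-1/2}(\curl,\Ga_{h_j})}^2\le \eta^2\,\|\bm\om\|_{H(\curl,\Om_h)}^2$ with $\eta^2=\max\{1+(h_1-h_2)^{-1},\,2\}$, which is the claim.

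The main obstacle I anticipate is the constant bookkeeping in the last step: a naive use of the arithmetic–geometric inequality distributes the product terms too generously and produces a constant strictly larger than the stated $\eta$. Landing exactly on $\max\{\sqrt{1+(h_1-h_2)^{-1}},\sqrt2\}$ requires organizing the three terms of the norm simultaneously (rather than in isolation) and choosing the Young-inequality weights so that the thin-slab contribution $1+(h_1-h_2)^{-1}$ and the product contribution $2$ are never added together, but only maximized. A secondary point to treat with care is the tangential Fourier reduction over $\Om_h$, whose cross-sections depend on $\wit x$ through the rough surface $\Ga_f$ and the obstacle $\Om$; this is handled by working on the flat slab adjacent to $\Ga_{h_j}$, where the cross-section is all of $\R^2$, as in \cite{LWZ2011}.
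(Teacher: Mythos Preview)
Your proposal is correct and is exactly the approach of \cite[Lemma~2.4]{LWZ2011}, which is all the paper does here: it states Lemma~\ref{lemB.2} without proof and refers to that reference. The Fourier reduction in $\wit x$, the one-dimensional trace identity $|g(h_j)|^2\le (h_1-h_2)^{-1}\!\int|g|^2+2\!\int|g||g'|$, and the use of $(1+|\xi|^2)^{-1/2}|\xi_k|\le 1$ together with $\na\cdot(\na\ti\bm\om)=0$ are precisely the ingredients used there, and your remark about organizing the Young-inequality weights so that the two contributions $1+(h_1-h_2)^{-1}$ and $2$ are maximized rather than summed is the correct way to land on the stated constant.
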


\end{appendix}

\section*{Acknowledgements}

The work was partially supported by the National Natural Science
Foundation of China grants 11771349 and 91630309, 
and the National Research Foundation of Korea (NRF-2020R1I1A1A01073356).

\end{document}